\numberwithin{equation}{theorem}
\def\todo#1{\textcolor{red}%
{\footnotesize\newline{\color{red}\fbox{\parbox{\textwidth-15pt}{\textbf{todo: } #1}}}\newline}}
\def\commentbox#1{\textcolor{red}%
{\footnotesize\newline{\color{red}\fbox{\parbox{\textwidth-15pt}{\textbf{comment: } #1}}}\newline}}
\newcommand{\BCM}{\textnormal{BCM}\xspace}
\newcommand{\BCMTight}{{\textnormal{BCM*}}\xspace}
\newcommand{\perf}{\textnormal{perf}}
\renewcommand{\m}{\mathfrak{m}}
\begin{document}

\title{BCM-thresholds of non-principal ideals}
\author{Sandra Rodr\'iguez-Villalobos}
\address{Department of Mathematics, University of Utah, Salt Lake City, UT 84112, USA}
\email{rodriguez@math.utah.edu}
\author{Karl Schwede}
\address{Department of Mathematics, University of Utah, Salt Lake City, UT 84112, USA}
\email{schwede@math.utah.edu}
\maketitle
\begin{abstract}
    Generalizing previous work of the first author, we introduce and study a characteristic free analog of the $F$-threshold for non-principal ideals, BCM-thresholds.  We show that this coincides with the classical $F$-threshold for weakly $F$-regular rings and that the set of BCM-thresholds coincides with the set of BCM-jumping numbers in a complete local regular ring.  We obtain results on $F$-thresholds of parameter ideals analogous to results of Huneke-\mustata-Takagi-Watanabe as well as a mixed characteristic version of one of their results on multiplicity.
    Instead of taking ordinary powers of an ideal, our definition uses fractional integral closure in an absolute integral closure of our ambient ring.
\end{abstract}

\section{Introduction}

Suppose $R$ is a ring of characteristic $p > 0$ and $\fra, J$ are proper nonzero ideals with $\fra \subseteq \sqrt{J}$.  Then the $F$-threshold of $\fra$ with respect to $J$ is a measure of how powers of $\fra$ compare with Frobenius powers of $J$.  That is, it is the limit 
\[
    c^J(\fra) := \lim_{e \to \infty} { \max\{n \;|\; \fra^n \nsubseteq J^{[p^e]} \}\over p^e}
\]
It is not obvious that this limit exists, but it does, a fact proved in full generality in  \cite{DeStefaniNunezBetancourtPerezFThresholdsAndRelated}.  
The $F$-threshold was introduced in \cite{MustataTakagiWatanabeFThresholdsAndBernsteinSato} as a generalization of the $F$-pure threshold ($\fpt(\fra)$) with which it agrees when $R$ is regular local and $J$ is maximal (the $F$-pure threshold is an analog of the log canonical threshold \cite{TakagiWatanabeFPureThresh}).  In the case of a regular local $R$, as one varies $J$, one obtains that the set of different $F$-thresholds is exactly the set of jumping numbers of $\tau(R, \fra^t)$.  $F$-thresholds also have intriguing interpretations for parameter ideals and provide insight into the multiplicity of various quotient rings \cite{HunekeMustataTakagiWatanabeFThresholdsTightClosureIntClosureMultBounds}. Other recent work on F-thresholds includes \cite{Trivdei.FThresholdsForProjectiveCurves,BadillaCespedesNunezBetantcourtRodriguezVillalobos.FVolumes,GonzalezJaramillo-VelezNunezBetancourt.FThresholdsAndTestIdealsOfThomSebastiani,JeffriesNunezBetancourtQuinlanGallego.BSTheoryForSingularPositive,DeStefaniNunEzBetancourtSmirnov.DefectOfFPT}. 

In \cite{RodriguezBCMThresholdsHypersurfaces}, the first author generalized some of the results of \cite{MustataTakagiWatanabeFThresholdsAndBernsteinSato} to arbitrary complete local domains in  the case when $\fra = (f)$ was principal.  For a fixed balanced big Cohen-Macaulay (BCM) $R^+$-algebra $B$, one should look at 
\[
    c^J_B(f) = \sup \{ t \in \bQ \;|\; f^t \notin JB \}.
\]
This agrees with $c^J(f)$ in characteristic $p > 0$ when $R$ is strongly $F$-regular \cite[Proposition 2.0.4, Definition 3.0.1]{RodriguezBCMThresholdsHypersurfaces} (and coincides with some related invariants in general), and assuming $R$ is regular, the set of such numbers agree with the jumping numbers of the BCM-test ideal $\tau_B(R, f^t)$ in general where $\tau_B$ is as defined in \cite{MaSchwedeSingularitiesMixedCharBCM}.

In this article, we explore characteristic free BCM-Thresholds when $\fra$ is not principal.  The first question then becomes how to replace $f^t$.  We consider $(\fra R^+)_{>t}$, the $(>t)$th fractional integral closure of $\fra$ in $R^+$, an absolute integral closure of $R$.  In other words, 
\[
    (\fra R^+)_{>t} = \{x \in R^+ \;|\; v(x) > t v(y) \text{ for each valuation of $K(R^+)$ over $R^+$ and $y \in \fra R^+$}\}
\]
Such fractional integral closures have appeared in a number of contexts although primarily for Noetherian rings inasmuch as we are aware, see for instance \cite[Section 10.5]{HunekeSwansonIntegralClosure}.  Because of this, we develop some of the basic theory in the non-Noetherian setting, see \autoref{subsec.FractionalIntegrallyClosedPowers} and \autoref{subsec.RelationWithIntegralClosureAndFrationalPowers}.

We now define
\[
    c^J_B(\fra) = \sup \{t \in \bQ \;|\; (\fra R^+)_{>t} \nsubseteq JB\}
\]
the \emph{\BCM-threshold of $\fra$ with respect to $J$ along $B$}, see \autoref{def.BCMThresholdGeneral}.  Thanks to \autoref{prop.FinitenessAssumingBS}, if $\fra \subseteq \sqrt{J}$, then $c^J_B(\fra)$ is finite as long as $B$ satisfies some technical conditions (which the usual big Cohen-Macaulay algebras always do, see \autoref{def.BCMThresholdGeneral}).

We are able to show that this definition satisfies some of the basic properties alluded to above.  First, we show it typically coincides with the classical $F$-threshold.
\begin{theoremA*}[{\autoref{thm.FThresholdTheoremViaR+Theorem}, equation \autoref{eq.ValuativeDescriptionOfcStarJ}, and \autoref{cor.FinalDescriptionOfcstar}}]
    Suppose $R$ is a complete local Noetherian $F$-finite domain of characteristic $p > 0$, $\fra, J \subseteq R$ are proper nonzero ideals and $\fra \subseteq \sqrt{J}$.  
    If $R$ is weakly $F$-regular, then $c^J_B(\fra)$ coincides with the classical $F$-threshold $c^J(\fra)$ as long as $B$ is large enough to capture tight closure (\autoref{def.BCMTight}).
\end{theoremA*}
Even without the $F$-regularity hypothesis, $c^J_B(\fra)$ still coincides with a natural $F$-threshold-like-invariant, the $F$-threshold up to tight closure in the sense of \cite[Definition 8.5]{MengMukhopadhyay.hFunctionHKDensityFrobeniusPoincare}, and which we denote by $c_*^J(\fra)$ (see also \cite[Theorem 8.10]{MengMukhopadhyay.hFunctionHKDensityFrobeniusPoincare} for other cases when $c^J(\fra) = c^J_*(\fra)$).

Analogous to characteristic $p > 0$, the \BCM-thresholds coincide with a variant of \BCM-jumping numbers.  To explain this, we first define a variant of the \BCM-test ideal of a pair $(R, \fra^t)$ with respect to a BCM $R^+$-algebra $B$.  We denote this by $\tau_{B,\elt}(R, (\fra R^+)_{>t})$.  In positive characteristic, this coincides with $\tau(R, \fra^t)$ and in mixed characteristic it coincides with previous BCM test ideals $\tau_B(R, \fra^t)$ as long as $B$ is perfectoid and sufficiently large, see \autoref{thm.BCMTestIdealViaValuative}.

\begin{theoremB*}[{\autoref{Main}}]
    Suppose $R$ is a complete local regular Noetherian domain and $B$ is a $\fram$-adically complete big Cohen-Macaulay $R^+$-algebra, then the set of BCM-thresholds $c^J_B(\fra)$ coincides with the set of jumping numbers of $\tau_{B, \elt}(R, (\fra R^+)_{>t})$:  those positive $t$ such that $\tau_{B,\elt}(R, (\fra R^+)_{>t}) \neq \tau_{B,\elt}(R, (\fra R^+)_{>t-\epsilon})$ for $1 \gg \epsilon > 0$.  
    
    In particular, in positive characteristic, or in mixed characteristic if $B$ is perfectoid and sufficiently large, this coincides with previous notions of jumping numbers of BCM test ideals.
\end{theoremB*}

In the case that $J$ is a parameter ideal, we obtain the following analog of \cite[Theorem 3.3]{HunekeMustataTakagiWatanabeFThresholdsTightClosureIntClosureMultBounds}.

\begin{theoremC*}[\autoref{thm.BCMVersionOfHMTWForParameter}]
    Let $R$ be a complete Noetherian local domain and $B$ BCM $R^+$-algebra
    satisfying the Brian{\c{c}}on-Skoda property.  Fix $x_1,\ldots, x_n$ a
    partial system of parameters of $R$ and set $J=(x_1,\ldots, x_n)$.
    Given an ideal $I\supseteq J$, $c^J_B(I)=n$ if and only if $\overline{I} = \overline{J}.$  
  \end{theoremC*}

  The Brian{\c{c}}on-Skoda property mentioned above is held by all big Cohen-Macaulay algebras satisfying a sufficiently good weak functoriality condition, see \cite{RodriguezSchwedeBrianconSkodaViaWeakFunctoriality}.

  Finally, we recover a mixed characteristic version of a result of Huneke-\mustata-Takagi-Watanabe.  We note that a substantial  part of this proof was provided to us by Linquan Ma, we thank him for letting us include it.

  \begin{theoremD*}[{\autoref{thm.MultiplicityStatement} \cf \cite[Theorem 5.6]{HunekeMustataTakagiWatanabeFThresholdsTightClosureIntClosureMultBounds}}]    
Suppose $(R, \fram = (x_1, \dots, x_d))$ is a  $d$-dimensional regular local ring  which is essentially finite type algebra over a DVR of mixed characteristic $(0, p> 0)$  and $J = (x_1^{a_1}, \dots, x_d^{a_d}) \subseteq R$ for some $a_i > 0$.  If $\fra$ is $\fram$-primary, then 
    \[
        e(\fra) \geq \Big( {d \over c^{J\widehat{R}}_B(\fra \widehat{R})} \Big)^d e(J)
    \]
    for $B$ a sufficiently large perfectoid \BCM $\widehat{R}^+$-algebra.  
\end{theoremD*}

\subsection*{Acknowledgements}
Sandra Rodr\'iguez-Villalobos was supported by  NSF Grant DMS-2101800.  Karl Schwede was supported by NSF Grants \#2501903, \#2101800 and NSF FRG Grant DMS-1952522 as well as Simons Travel Support for Mathematicians SFI-MPSTSM-00013051.  This material is based upon work supported by the National Science Foundation under Grant No. DMS-1928930, while the authors were in residence at the Simons Laufer Mathematical Sciences Institute (formerly MSRI) in Berkeley, California, during the Spring 2024 semester.  The authors thank Rankeya Datta, Alessandro De Stefani, Mark Johnson, Linquan Ma, Jonathan Monta\~no, Ilya Smirnov, and Kevin Tucker for valuable conversations.  We thank Neil Epstein, Linquan Ma, and Ilya Smirnov for valuable comments on a previous draft of the paper.  We particularly thank the referee for numerous corrections, requests for clarification, and useful suggestions.  Finally we thank Linquan Ma for help with Theorem D.

\section{Background}

We begin by recording the notion of the Frobenius ($F$-)thresholds and two variants.  For even further generalizations, see for instance \cite{DeStefaniNunezBetancourtPerezFThresholdsAndRelated}.  In what follows, for an ideal $J \subseteq R$ in a Noetherian ring of characteristic $p > 0$, we use $J^{F}$ to denote Frobenius closure and $J^*$ to denote tight closure, see \cite{HochsterHunekeTC1}.

\begin{definition}[$F$-thresholds and variants]
    Suppose $R$ is a Noetherian domain of characteristic $p > 0$ and $J, \fra \subseteq R$ are proper nonzero ideals with $\fra \subseteq \sqrt{J}$.  
    
    \begin{description}
    \item[Classical $F$-thresholds \cite{MustataTakagiWatanabeFThresholdsAndBernsteinSato}]    We define $\nu_\fra^J(p^e) := \max\{ n > 0 \;|\; \fra^n \nsubseteq J^{[p^e]} \}$.  With that fixed, we define the \emph{$F$-threshold of $\fra$ with respect to $J$} to be: 
    \[
        c^J(\fra) = \lim_{e \to \infty} {\nu_\fra^J(p^e)\over p^e }.
    \]

    \item[$F$-thresholds up to Frobenius closure \cite{HunekeMustataTakagiWatanabeFThresholdsTightClosureIntClosureMultBounds}]
    We define $\tld \nu_\fra^J(p^e) := \max\{ n > 0 \;|\; \fra^n \nsubseteq (J^{[p^e]})^F \}$ 
    and define the \emph{perfect $F$-threshold of $\fra$ with respect to $J$} to be:
    \[
        \tld c^J(\fra) = \lim_{e \to \infty} {\tld \nu_\fra^J(p^e)\over p^e }
    \]
    
    \item[$F$-thresholds up to tight closure ]
    Set $\nu_{\fra}^{*J}(p^e) := \max\{ n > 0 \;|\; \fra^n \nsubseteq (J^{[p^e]})^* \}$ and define the \emph{tight closure $F$-threshold of $\fra$ with respect to $J$} to be:
    \[
        c_*^{J}(\fra) = \lim_{e \to \infty} {\nu_{\fra}^{*J}(p^e)\over p^e }.
    \]
    \end{description}
\end{definition}

If $R$ is $F$-pure, the first and second limits agree.  If $R$ is weakly $F$-regular, all three agree.

It is not obvious that these limits exist.
The $F$-threshold limit was shown to exist in full generality in  \cite{DeStefaniNunezBetancourtPerezFThresholdsAndRelated}.  The perfect $F$-threshold limit was shown to exist earlier in \cite[Page 6]{HunekeMustataTakagiWatanabeFThresholdsTightClosureIntClosureMultBounds}.  
The $F$-threshold up to tight closure was defined in \cite[Definition]{MengMukhopadhyay.hFunctionHKDensityFrobeniusPoincare} where it was denoted by $r_{R,J,\fra}$.  The limit was shown to exist in \cite[Lemma 8.6]{MengMukhopadhyay.hFunctionHKDensityFrobeniusPoincare}.  Their proof, though stated in the local case, extends to the non-local case.  

It is clear that all three limits coincide when $R$ is weakly $F$-regular.  For other cases when they coincide see \cite[Theorem 8.10]{MengMukhopadhyay.hFunctionHKDensityFrobeniusPoincare}.

\subsection{Fractional integrally closed powers}
\label{subsec.FractionalIntegrallyClosedPowers}

    We recall the notion of fractional integrally closed powers of ideals.  One reference is \cite[Section 10.5]{HunekeSwansonIntegralClosure} also see \cite{Knutson.BalancedNormalConesAndFultonMacPhersonIntersectionTheory,Lejeune-JalabertTeissier.IntegralClosureAndEquisingularity,UlrichHong.SpecializationAndIntegralClosure,BisuiDasHaMontano.RationalPowersInvariantIdeals,GoelMaddoxTaylor.NewVersionsOfFrobeniusAndIntegralClosureOfIdeals} for related discussion.  We will eventually need to generalize this notion outside of the Noetherian setting in a way that we believe experts already knew.
    In what follows, if $R$ is an integral domain, we say a valuation $v$ of $K(R)$ is \emph{over $R$} if $v$ is nonnegative on $R$.  We notice that if $J = (f_1, \dots, f_n)$ is an ideal and $v$ is any valuation over $R$ (possibly nondiscrete), then 
    \begin{equation}
        \label{eq.ValuationOnFinitelyGeneratedIdealIsMin}
        v(J) :=\inf \{ v(x) \;|\; x \in J \}= \min\{ v(f_1), \dots, v(f_n) \} = \min \{ v(x) \;|\; x \in J \} .
    \end{equation}
    As any $x \in J$ can be written as a $R$-linear combination of the $f_i$, \cf \cite[discussion after Definition 6.8.9]{HunekeSwansonIntegralClosure}.

    We recall the following from essentially \cite[Section 10.5, Proposition 10.5.2(6)]{HunekeSwansonIntegralClosure}.  

    \begin{definition}[{\cite[Proposition 10.5.2]{HunekeSwansonIntegralClosure}}]
        \label{def.FractionalPowerInNoetherianRing}
        Suppose $R$ is a Noetherian domain and $I \subseteq R$ is an ideal.  For any {real} number $t \geq 0$ we define the ideal  
        \[
            I_t := \{ x \in R \;|\; v(x) \geq t v(I) ;\; \text{ where $v$ runs over discrete rank-1 valuations of $K(R)$ over $R$}\}
        \]        
        Likewise, we define 
        \[
            I_{>t} := \{ x \in R \;|\; v(x) > t v(I) ;\; \text{ where $v$ runs over discrete rank-1 valuations of $K(R)$ over $R$}\}.
        \]        
    \end{definition}

        If $t = a/b$ with $a, b \in \bZ_{\geq 0}$, $b \neq 0$, then 
    \begin{equation}
        \label{eq.RationalPowerIdealPowers}
        I_t = \big\{x \in R \;|\; x^b \in \overline{I^a} \big\}.
    \end{equation}
    Indeed, \eqref{eq.RationalPowerIdealPowers} is the definition provided in \cite{HunekeSwansonIntegralClosure}, but we will typically use the valuative description.

    We briefly describe some alternate characterizations of $I_t$.

    \begin{lemma}
        \label{lem.FractionalPowersNoetherianAlternateDescriptions}
        With notation as in \autoref{def.FractionalPowerInNoetherianRing}, then the following are equivalent for any $x \in R$.
        \begin{enumerate}
            \item $x \in I_t$. \label{lem.FractionalPowersNoetherianAlternateDescriptions.a}
            \item $x^b \in \overline{I^a}$ for some rational number $a/b\geq t$ \label{lem.FractionalPowersNoetherianAlternateDescriptions.b}
            \item $v(x) \geq t v(I)$ for all Rees valuations $v$ of $I$. \label{lem.FractionalPowersNoetherianAlternateDescriptions.c}
            \item {If $t$ is rational}, then for each valuation (possibly nondiscrete) $v$ of $K(R)$ over $R$, there exists $y \in I$ such that $v(x) \geq t v(y)$. \label{lem.FractionalPowersNoetherianAlternateDescriptions.d}
            \item If $R$ is excellent {and $t$ is rational}, then $v(x) \geq t v(I)$ for all divisorial valuations of $K(R)$ over $R$. \label{lem.FractionalPowersNoetherianAlternateDescriptions.e}
        \end{enumerate}
        {
        Similarly, the following are equivalent for any $x \in R$.
        \begin{enumerate}
            \setcounter{enumi}{5}
            \item $x \in I_{>t}$. \label{lem.FractionalPowersNoetherianAlternateDescriptions.f}
            \item $x^b \in \overline{I^a}$ for some rational number $a/b>t$. \label{lem.FractionalPowersNoetherianAlternateDescriptions.g}
            \item $v(x) > t v(I)$ for all Rees valuations $v$ of $I$. \label{lem.FractionalPowersNoetherianAlternateDescriptions.h}
        \end{enumerate} }
    \end{lemma}
    \begin{proof}
        {If $x^b \in \overline{I^a}$ for some rational number $a/b\geq t$, then $v(x^b)\geq v(I^a).$ 
        Since $R$ is Noetherian, $v(I^a)=av(I)$, see \cite[discussion after Definition 6.8.9]{HunekeSwansonIntegralClosure}. It follows that
        \[v(x)\geq\frac{a}{b}v(I)\geq t v(I)\]
        where $v$ runs over discrete rank-1 valuations of $K(R)$ over $R$.
       Thus $x\in I_t.$

       Since all Rees valuations are discrete of rank 1, we have that \autoref{lem.FractionalPowersNoetherianAlternateDescriptions.a} implies \autoref{lem.FractionalPowersNoetherianAlternateDescriptions.c}.

       Now suppose that $v(x) \geq t v(I)$  where $v$ runs over all Rees valuations $v_1,\ldots,v_n$ of $I$. {Consider a rational number $a/b$ such that $v_i(x) \geq \frac{a}{b} v_i(I)\geq t v_i(I)$  for all $i$}. Then \[v_i(x^b)=bv_i(x)\geq a v_i(I)=v_i(I^a).\]
       It follows that $x^b\in \overline{I^a}$.

       Analogously, \autoref{lem.FractionalPowersNoetherianAlternateDescriptions.f}, \autoref{lem.FractionalPowersNoetherianAlternateDescriptions.g} and \autoref{lem.FractionalPowersNoetherianAlternateDescriptions.h} are equivalent.}

        Let $I'_t$ denote the ideal made up of those elements satisfying condition \autoref{lem.FractionalPowersNoetherianAlternateDescriptions.d}.  Certainly $I'_t \subseteq I_t$ as $I'_t$ has more conditions on its elements.  Thus, take $x \in I_t$ and suppose that $v$ is a  valuation on $R$.  Write $t = a/b$ as in \autoref{eq.RationalPowerIdealPowers} so that $x^b \in \overline{I^a}$.  Thus $v(x^b) \geq v(I^a)$.  As $R$ is Noetherian, $v(I^a) = av(I)$.  Hence we have that 
        \[
            v(x) \geq {a \over b} v(I) = t v(I)
        \]
        as desired.


        For the final equivalence, notice that every Rees valuation is divisorial by \cite[Proposition 10.4.3]{HunekeSwansonIntegralClosure}.  As we already have the equivalence of \autoref{lem.FractionalPowersNoetherianAlternateDescriptions.a} and \autoref{lem.FractionalPowersNoetherianAlternateDescriptions.c}, and this condition lies between those two, we are done.
    \end{proof}

    We also make the following observation.

    \begin{lemma}
        \label{lem.IBiggerThanIsDefinedViaUnion}
        With notation as above $I_{>t} = \bigcup_{\epsilon > 0} I_{t+\epsilon}$ {where $\epsilon > 0$ runs over the real numbers such that $t+\epsilon$ is rational. Additionally, $I_{t} = \bigcup_{\epsilon \geq 0} I_{t+\epsilon}$ where $\epsilon > 0$ runs over the real numbers such that $t+\epsilon$ is rational.}
    \end{lemma} 
    \begin{proof}
        {In both cases the containment $\supseteq$  follows from the definitions.}
        Suppose $I = (f_1, \dots, f_n)$.  If $x \in I_{>t}$, then we can find some $\epsilon > 0$ {so that $t+\epsilon$ is rational} and $v(x) \geq (t+\epsilon) v(f_i)$ for the finitely many Rees valuations $v$.  But we need only check these finitely many Rees valuations by \cite[Proposition 10.5.2]{HunekeSwansonIntegralClosure} or \autoref{lem.FractionalPowersNoetherianAlternateDescriptions}.  The first result follows.

        { Likewise consider $x \in I_{t}$. Then, we can find some $\epsilon\geq0$ such that $t+\epsilon$ is rational and $v(x) \geq (t+\epsilon) v(f_i)$ for the finitely many Rees valuations $v$ and so $x \in I_{t+\epsilon}.$}
    \end{proof}

    In view of the above, we make the following definition.  We notice we only work with rational $t$ below.

    \begin{definition}
        \label{def.GeneralRationalIntegralClosure}
        Suppose $R$ is a (possibly non-Noetherian) integral domain, $I \subseteq R$ is an ideal, and $t \geq 0$ is a \emph{rational} number.  We define $I_t$ to be the set of $x \in R$ such that 
        for each valuation $v$ of $K(R)$ over $R$, we have that $v(x) \geq t v(y)$ for some $y \in I$.  It is straightforward to see this is an ideal.  Likewise, we define $I_{>t}$ to be the set of all $x$ such that $v(x) > tv(y)$ for some $y \in I$ for each valuation $v$ of $K(R)$.
    \end{definition}    

    \begin{remark}
        We certainly have $\bigcup_{\epsilon > 0} I_{t+\epsilon} \subseteq I_{> t}$ but we do not see equality in this generality.  {Although see \autoref{rem.UnionForR+} below where we note that these agree for the absolute integral closure of a Noetherian domain when $I$ is finitely generated.  This is because we can reduce the problem to the Noetherian case in this situation.}
    \end{remark}

    We will develop the non-Noetherian theory we need in  \autoref{subsec.RelationWithIntegralClosureAndFrationalPowers} below.


\subsection{Big Cohen-Macaulay $R^+$-algebras}

\begin{definition}[\BCM algebras]
    Suppose $(R, \fram)$ is an excellent Noetherian local domain.  Recall that a \emph{balanced Big Cohen-Macaulay algebra} is an $R$-algebra $B$ such that every system of parameters on $R$ forms a regular sequence on $B$.  We use \emph{BCM} as shorthand to denote balanced Big Cohen-Macaulay.
\end{definition}

BCM algebras exist in characteristics $p > 0$ and 0 by \cite{HochsterHunekeInfiniteIntegralExtensionsAndBigCM} (\cf \cite{HochsterHunekeApplicationsofBigCM}) and in mixed characteristic by \cite{AndreWeaklyFunctorialBigCM}.  In fact, in characteristic $p > 0$, the absolute integral closure of an excellent local domain $R$ in some $\overline{K(R)}$, denoted $R^+$, is \BCM by \cite{HochsterHunekeInfiniteIntegralExtensionsAndBigCM}.  In mixed characteristic \cite{BhattAbsoluteIntegralClosure,BMPSTWW-MMP} showed that the $p$-adic completion $\widehat{R^+}$, of $R^+$ is \BCM.  Using ultra products, there is a variant of $R^+$ that is \BCM in characteristic zero \cite{SchoutensCanonicalBCMAlgebrasAndRational} and \BCM $R^+$-algebras\footnote{that is, \BCM $R$-algebras that are also $R^+$-algebras} are known to exist thanks to \cite{MurayamaSymbolicTestIdeal}.  Finally, we note that if $B$ is a BCM $R$-algebra (or even an $R$-algebra such that a single system of parameters is a regular sequence, that is it is potentially unbalanced), then the $\fram$-adic completion is also a BCM $R$-algebra (that is, it is balanced), see for instance \cite{BartijinStrooker.ModificationsMonomiales} or \cite[Exercise 8.1.7 \& Theorem 8.5.1]{BrunsHerzog}.  

Suppose that $R$ is a complete local domain of characteristic $p > 0$.  Note, if $B$ is a BCM $R$-algebra, then $J^* \supseteq (JB) \cap R$ as $B$ is solid, see \cite[Theorem 8.6(b)]{HochsterSolidClosure}.  Interestingly, the reverse containment holds for sufficiently big $B$ as can be seen by combining \cite[Theorem 11.1]{HochsterSolidClosure} (or \cite{GabberMSRINotes}) with \cite[Theorem 8.4]{DietzBCMSeeds} (as any set of \BCM $R$-algebras admits a map to a larger \BCM $R$-algebra).  

\begin{definition}
    \label{def.BCMTight}
    With notation as above, still in characteristic $p > 0$, if $B$ is a \BCM $R$-algebra such that  $(JB) \cap R = J^*$ for all ideals $J \subseteq R$ we say that $B$ \emph{captures tight closure for $R$}.  In this case, we say that $B$ is a \BCMTight \emph{$R$-algebra}. 
\end{definition}

Note, if $B$ captures tight closure, so does any larger \BCM algebra, such as $B_{\perf} = \colim_{F} B = \bigcup_e B^{1/p^e}$.  In particular, we can assume that $B$ is perfect (and in particular has a map from $R_{\perf}$).

Even if $B$ is not \BCMTight, it still can produce a nice ``closure'' operation by extension and contraction.  

\begin{definition}
    Suppose $R$ is a ring and $B$ is an $R$-algebra (\BCM or not).  For any ideal $\fra \subseteq R$ we define $\fra^{\mathrm{cl}_B} := \fra B \cap R$.  In the special case that $B = R^+$ when $R$ is a domain, we define $\fra^+ := \fra R^+ \cap R$ (the \emph{plus closure}).
\end{definition}

It will be important for us that 
\begin{equation}
    \label{eq.PlusClosureInIntegralClosure}
    \fra^+ \subseteq \overline{\fra} 
\end{equation}
where the right side is the integral closure of $\fra$.
In characteristic $p > 0$ this follows as $\fra^* \subseteq \overline{\fra}$, \cite[(5.2) Theorem]{HochsterHunekeTC1}.  In characteristic zero this follows from the fact that $\fra^+ = \fra R^{\mathrm{N}} \cap R$ as any finite ring extension from a normal domain splits.
In mixed characteristic, see \cite[Proposition 2.6]{Heitmann.ExtensionsOfPlusClosure}.  

Suppose now we are given $S$ an $R$-algebra such that $R \subseteq S \subseteq R^+$.  The following is well known.

\begin{lemma}
    Suppose $R \subseteq S$ is a finite extension of complete local Noetherian domains.  An $S$-algebra $B$ is \BCM over $S$ if and only if it is \BCM over $R$.
\end{lemma}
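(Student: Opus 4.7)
The plan is to reduce the claim to a comparison of local cohomology. First I would record the standard facts about the finite extension $R \subseteq S$ of complete local Noetherian domains: $\dim R = \dim S =: d$; integrality forces $\mathfrak{m}_S \cap R = \mathfrak{m}_R$; and because $S/\mathfrak{m}_R S$ is a finite-dimensional $R/\mathfrak{m}_R$-algebra it is artinian, so $\sqrt{\mathfrak{m}_R S} = \mathfrak{m}_S$. In particular, every system of parameters $x_1, \ldots, x_d$ of $R$ remains a system of parameters of $S$ through the inclusion, and iterating the containment $\mathfrak{m}_S^n \subseteq \mathfrak{m}_R S$ for $n \gg 0$ shows $\mathfrak{m}_R B = B$ if and only if $\mathfrak{m}_S B = B$.

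Next I would invoke the standard characterization: for a Noetherian local ring $(T, \mathfrak{m}_T)$ of dimension $d$ and a $T$-module $M$ with $\mathfrak{m}_T M \neq M$, $M$ is balanced big Cohen--Macaulay over $T$ if and only if $H^i_{\mathfrak{m}_T}(M) = 0$ for all $i < d$. One direction follows from the fact that local cohomology is computed by the \v{C}ech complex on any system of parameters and that a regular sequence gives vanishing cohomology in degrees below $d$; the other is the familiar induction producing parameters via prime avoidance. Applied with $T = R$ and $T = S$, the problem reduces to verifying $H^i_{\mathfrak{m}_R}(B) = H^i_{\mathfrak{m}_S}(B)$ for all $i$.

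For this last identity I would use that local cohomology $H^i_I(-)$ depends on the ideal $I$ only through its radical. Viewing $B$ as either an $R$- or $S$-module, both sides agree with $H^i_{\sqrt{\mathfrak{m}_R B}}(B) = H^i_{\sqrt{\mathfrak{m}_S B}}(B)$, with the middle equality holding because $\sqrt{\mathfrak{m}_R B} = \sqrt{\mathfrak{m}_S B}$ inside $B$ by the first paragraph. Combined with the equivalence $\mathfrak{m}_R B = B \Leftrightarrow \mathfrak{m}_S B = B$, this yields the claimed equivalence of \BCM conditions. The only nontrivial ingredient is the local cohomology characterization of balanced \BCM in exactly the form needed; since this is classical and the rest is just insensitivity of local cohomology to radicals, there is no substantive obstacle. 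An alternative, more hands-on route would show directly that any s.o.p.\ of $S$ is a regular sequence on $B$ by comparing Koszul complexes against those of an s.o.p.\ of $R$ embedded in $S$, but the cohomological path is noticeably cleaner.
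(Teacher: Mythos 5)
The overall architecture of your argument is sound in outline, and the first paragraph (the nontriviality equivalence $\mathfrak{m}_R B = B \Leftrightarrow \mathfrak{m}_S B = B$ via $\sqrt{\mathfrak{m}_R S} = \mathfrak{m}_S$, and the observation that a system of parameters of $R$ remains one for $S$) is correct and matches the paper's own argument. The problem is the second step.

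You invoke, as ``standard,'' that for a Noetherian local ring $(T,\mathfrak{m}_T)$ of dimension $d$ and a $T$-module $M$ with $\mathfrak{m}_T M \neq M$, $M$ is \emph{balanced} big Cohen--Macaulay if and only if $H^i_{\mathfrak{m}_T}(M)=0$ for $i < d$. The forward implication is fine: a regular system of parameters gives Koszul acyclicity, and passing to the colimit over powers yields vanishing of local cohomology. But the converse is not a theorem in this generality, and this is exactly where the proof breaks. For non-finitely-generated modules, vanishing of $H^i_{\mathfrak{m}_T}(M) = \operatorname{colim}_t H^i(\operatorname{Kos}^\bullet(\underline{x}^t;M))$ in degrees $<d$ says only that the colimit of Koszul cohomologies vanishes, not that any individual Koszul complex is acyclic; and the prime-avoidance argument you gesture at for producing a regular parameter from $H^0_{\mathfrak{m}_T}(M)=0$ fails when $\operatorname{Ass} M$ is infinite. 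More to the point, even granting that one can produce \emph{some} regular system of parameters (``big CM''), that is strictly weaker than \emph{balanced} big CM, and local cohomology supported only at $\mathfrak{m}_T$ cannot see what happens at smaller primes, which is precisely what controls whether \emph{every} system of parameters is regular.

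The paper avoids this pitfall by using the correct characterization (\cite[Corollary 2.8]{BhattAbsoluteIntegralClosure}): $B$ is balanced BCM over $S$ if and only if $\mathfrak{m}_S B \neq B$ and $H^i_{\mathfrak{q} S_{\mathfrak{q}}}(B_{\mathfrak{q}}) = 0$ for every $\mathfrak{q} \in \operatorname{Spec}(S)$ and every $i < \dim S_{\mathfrak{q}}$. This forces the proof to work at \emph{all} primes, not just the maximal ideal, and this is where the genuine content lies: a prime $\mathfrak{p}$ of $R$ generally splits into several primes $\mathfrak{q}_1,\ldots,\mathfrak{q}_n$ of $S$, and the paper checks the vanishing one $\mathfrak{q}_j$ at a time, using the decomposition $H^i_{\mathfrak{p} R_{\mathfrak{p}}}(B_{\mathfrak{p}}) = H^i_{\mathfrak{p} S}(B S_{\mathfrak{p}}) = \prod_j H^i_{\mathfrak{q}_j}(B_{\mathfrak{p}})$ over the semi-local ring $S_{\mathfrak{p}}$. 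Your ``radical insensitivity'' observation is still the right mechanism at each individual prime, but your proof as written never descends below the maximal ideal, so it cannot establish balancedness. If you replace the maximal-ideal-only criterion with the all-primes version and carry out the splitting of primes, your argument becomes essentially the paper's.
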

\begin{proof}
    First we show nontriviality, let $\m_R$ and $\m_S$ be the maximal ideals of $R$ and $S$ respectively. There exists $n>0$ such that $\m_S^n\subseteq\m_RS$. Thus, 
    \[\m_S^n B\subseteq(\m_RS) B=\m_R B\subseteq \m_SB.\]
    and, since $\m_SB=B$ if and only if $\m_S^n B=B$,
    it follows that $\m_RB\neq B$ if and only if $\m_SB\neq B.$

    Now we show the depth condition. Any \BCM $S$-algebra is automatically a BCM $R$-algebra as any system of parameters of $R$ is also a system of parameters of $S$.
	To prove the converse, suppose that $B$ is \BCM over $R$. 
    Fix $\frq$ to be a prime of $\Spec S$ and let $\frp = \frq \cap R$.  
	Set $\frp\in \Spec(R)$ and let $\frq_1 := \frq,\frq_2,\ldots, \frq_n$ be the prime ideals of $S$ lying over
	$\frp$. 
	We have that
	$$0=H^i_{\frp R_\frp}(B_\frp)=H^i_{\frp S}(B S_\frp)= \prod H^i_{\frq_j}(B_\frp).$$
	Thus, $H^i_{\frq_j}(B_\frp)=0$ for each $j$ and so $H^i_{\frq_jS_{\frq_j}}(B_{\frq_j})=0$.
	By \cite[Corollary 2.8]{BhattAbsoluteIntegralClosure}, $B$ is BCM over $S$. 
\end{proof}

In characteristic $p > 0$, it is also possible to find a \BCM $R^+$-algebra that is \BCMTight for every finite $R$-algebra $S \subseteq R^+$.  We believe this is known to experts but we are not aware of a reference.  We use a construction of Gabber to accomplish this.

\begin{lemma}[\cf \cite{GabberMSRINotes}]
    \label{lem.BCMTightExist}
    Suppose $R$ is a complete local Noetherian domain of characteristic $p > 0$.  Then there is a \BCM $R^+$-algebra $B$ such that $B$ is \BCMTight for every finite $R$-algebra $S \subseteq R^+$.  We call such a $B$ a \emph{\BCMTight $R^+$-algebra}.
\end{lemma}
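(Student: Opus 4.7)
The plan is to build $B$ as an amalgamation, via Dietz's seed theorem \cite{DietzBCMSeeds}, of $R^+$ together with a \BCM witness for every element of tight closure in every finite $R$-subalgebra $S \subseteq R^+$.  Gabber's construction enters in producing the individual witnesses.

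For each triple $(S, J, x)$, where $S$ is a finite $R$-subalgebra of $R^+$, $J \subseteq S$ is an ideal, and $x$ lies in $J^*$ (the tight closure of $J$ inside $S$), one applies \cite[Theorem~11.1]{HochsterSolidClosure} (or \cite{GabberMSRINotes}) to produce a \BCM $S$-algebra $C_{S,J,x}$ in which the image of $x$ lies in $JC_{S,J,x}$; by the preceding lemma, this is automatically a \BCM $R$-algebra.  The collection of all such triples forms a set (bounded in size by the cardinality of $R^+$), so Dietz's seed theorem yields a single \BCM $R$-algebra $B$ receiving compatible $R$-algebra maps from $R^+$ and from every $C_{S,J,x}$.

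Verification is then routine modulo one compatibility issue.  By the preceding lemma, $B$ is \BCM over each finite $S \subseteq R^+$, so $(JB) \cap S \subseteq J^*$ follows from solidity of \BCM algebras.  For the reverse containment, the image of any $x \in J^*$ in $C_{S,J,x}$ lies in $JC_{S,J,x}$, whence its image in $B$ lies in $JB$; provided this image coincides with the image of $x$ under the canonical $S \subseteq R^+ \to B$, one concludes $x \in (JB) \cap S$.

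The main obstacle, and the place where care is required, is precisely this last compatibility: a bare application of Dietz's seed theorem only supplies an $R$-algebra map $C_{S,J,x} \to B$, so the composite $S \to C_{S,J,x} \to B$ need not agree with the canonical $S \subseteq R^+ \to B$, and a priori $x$'s two images in $B$ could differ.  To handle this, I would first upgrade each $C_{S,J,x}$ to a \BCM $R^+$-algebra $C'_{S,J,x}$ in which $x$'s image under the canonical $S \subseteq R^+ \to C'_{S,J,x}$ still lies in $JC'_{S,J,x}$---for instance by applying the seed theorem to a suitable quotient of $R^+ \otimes_S C_{S,J,x}$ after checking the seed (non-$\m$-triviality) condition---and only then amalgamate the $C'_{S,J,x}$'s together with $R^+$ via a final pass of Dietz's theorem.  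This forces the two candidate $S$-structures on $B$ to coincide, and the verification goes through cleanly.
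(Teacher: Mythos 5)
Your approach is genuinely different from the paper's. The paper uses Gabber's explicit product--localization construction: $T' = W^{-1}\big(\prod_{\bN} R^+\big)$, where $W$ is the multiplicative set generated by elements $\mathbf{c}_S = (c_S, c_S^{1/p}, c_S^{1/p^2}, \dots)$ for test elements $c_S$ of each finite $S \subseteq R^+$. It then verifies directly that $T'$ is BCM (regular-sequence condition component-wise using that $S^+ = R^+$ is BCM over $S$; non-triviality via a valuation argument showing $c^{1/p^e} \notin \m R^+$ for $e \gg 0$) and captures tight closure (since $x \in J^*$ gives $c_S^{1/p^e} x \in J R^+$ for all $e$, hence $\mathbf{c}_S x \in JT$, hence $x \in JT'$). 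The key point is that $T'$ is an $R^+$-algebra \emph{by construction} (via the diagonal map $R^+ \to T$), so the compatibility issue you correctly flag never arises.

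Your proposal, by contrast, relies on Dietz's seed theorem to amalgamate solid-closure witnesses, which is the strategy the paper gestures at in the remarks preceding \autoref{def.BCMTight} but then avoids. You correctly identify the essential difficulty: amalgamation as $R$-algebras does not force the maps $S \subseteq R^+ \to B$ and $S \to C_{S,J,x} \to B$ to agree. However, your proposed resolution does not close this gap. Upgrading each $C_{S,J,x}$ to a BCM $R^+$-algebra $C'_{S,J,x}$ (by seeding $R^+ \otimes_S C_{S,J,x}$, which is a tensor product of seeds over the complete local domain $S$ and hence a seed) only ensures that \emph{within each} $C'_{S,J,x}$ the image of $x$ is correct. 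But the ``final pass of Dietz's theorem'' amalgamates these over $R$, not over $R^+$, so the maps $R^+ \to C'_{S,J,x} \to B$ for different triples and the direct map $R^+ \to B$ again need not coincide; the assertion ``this forces the two candidate $S$-structures on $B$ to coincide'' is exactly the unresolved point, just pushed one step later. To actually complete this route one would need to amalgamate \emph{over} $R^+$: write $\bigotimes_{R^+} C'_{S,J,x}$ as a filtered colimit of finite tensor products $\bigotimes_S C'_i$ over finite subextensions $S$, observe each such finite tensor is a seed over $S$ (hence over $R$) by Dietz, and invoke that a directed colimit of seeds is a seed. This can be done, but it is not what you wrote, and the compatibility claim as stated is unjustified.
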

\begin{proof}
	Consider $T=\prod_\bN R^+$ with the diagonal map $R^+\rightarrow T$. For each finite extension  $R\subseteq S \subseteq R^+$, let $c_S\in R$ be a test element for $S$ and let $\mathbf{c}_S=(c_S,c_S^{1/p}, c_S^{1/p^2}, \ldots).$ Let $W\subseteq T$ be the multiplicative set generated by $\{\mathbf{c}_S  \;|\; R\subseteq S \subseteq R^+ \text{ a finite ring extension}\}$ and let $T'=W^{-1}T$.  First we prove that $T'$ is a \BCM algebra, which is an argument due to Gabber.  We include it for the convenience of the reader.
	
	Let $x_1,\ldots, x_n\in S$ be a system of parameters for $S$.
	Note that we have $(x_1,\ldots, x_n)T=\prod_\bN(x_1,\ldots, x_n)R^+$ since $(x_1,\ldots, x_n)$ is finitely generated.
	Now suppose that $t=(t_0,t_1,t_2, \ldots)\in T$ is such that $t\in\ker(T/(x_1,\ldots, x_{i-1})T\xrightarrow{\cdot x_i}T/(x_1,\ldots, x_{i-1})T).$	Then, for each $j$, $x_it_j\in (x_1,\ldots, x_{i-1})R^+=(x_1,\ldots, x_{i-1})S^+$ and, since $S^+$ is a BCM $S$-algebra, $t_j\in(x_1,\ldots, x_{i-1})S^+$. Thus, $t\in (x_1,\ldots, x_{i-1})T$.  Hence $x_i$ acts injectively on the localization $T'/(x_1,\ldots, x_{i-1})T'$ as well.
	
	On the other hand, suppose that $1\in \m T'$ where $\m=(y_1,\ldots, y_l)$ is the maximal ideal of $S$. Then, $1=\sum \mathbf{a}_j\frac{y_j}{\mathbf{c_j}}$. Clearing denominators, we have that there exist $\mathbf{c}=(c,c^{1/p}, c^{1/p^2}, \ldots)\in W$ such that $\mathbf{c}\in \m T.$ Equivalently, $c^{1/p^e}\in \m R^+$ for all $e$. Thus, for each $e$,
	$$c^{1/p^e}=\sum a_{e,j}y_j.$$
	Let $\nu$ be a valuation of $R^+$ that is an extension of a valuation centered at $\m$. Then,
	$$\frac{1}{p^e}\nu(c)=\nu(c^{1/p^e})\geq\min\{\nu(a_{e,j}y_j)\}=\min\{\nu(a_{e,j})+\nu(y_j)\}\geq \min\{\nu(y_j)\}$$
	for all $e$, which is impossible given that $\min\{\nu(y_j)\}>0$. It follows that $ \m T'\neq T'.$
	Therefore, $x_1,\ldots, x_n$ is a regular sequence on $T'$.
	Hence, $T'$ is a $\BCM$ S-algebra.
	
	Now let $J$ be an ideal of $S$. Suppose that $x\in J^*$. Then, $c_S^{1/p^e}x\in JR_\perf\subseteq JR^+$ for all $e$, so $\mathbf{c}_Sx\in JT$ and $x\in JT'$.  Hence, $J^*\subseteq JT'\cap S.$ 
    Since $S$ is a complete local ring of prime characteristic, we have that $J^*=JT'\cap S.$ Therefore, $T'$ is a \BCMTight S-algebra.
\end{proof}

{
\subsection{Test ideals in characteristic $p > 0$}

    We recall the definition and basic properties of (generalized) test ideals in characteristic $p > 0$.

    \begin{definition}[{\cite{HaraYoshidaGeneralizationOfTightClosure}, \cf \cite{HochsterFoundations,HochsterHunekeTC1,HaraInterpretation,HaraTakagiOnAGeneralizationOfTestIdeals}}]
        Suppose $(R,\fram)$ is an $F$-finite Noetherian local domain of characteristic $p > 0$, $\fra \subseteq R$ is an ideal, and $t > 0$ is a real number.  Then we define the (generalized) \emph{test ideal of $(R, \fra^t)$} to be 
        \[
            \tau(R, \fra^t) = \sum_{e > 0} \sum_{f \in \fra^{\lceil tp^e \rceil}} \Image\big((c f)^{1/p^e}\Hom_R(R^{1/p^e}, R) \to R\big).
        \]
        In the above, $c$ is a big test element for $R$ and the map is evaluation-at-1.  If $\fra = R$ or $t = 0$, then we simply denote it by $\tau(R)$.  Summing over $e \gg 0$ instead of $e > 0$ also produces the same ideal.

        Set $\omega_R$ to be a canonical module of $R$.  We define the \emph{test module of $(R, \fra^t)$} to be 
        \[
            \tau(\omega_R, \fra^t) = \sum_{e > 0} \sum_{f \in \fra^{\lceil tp^e \rceil}} \Image\big((c f)^{1/p^e}\Hom_R(R^{1/p^e}, \omega_R) \to \omega_R\big)
        \]
        with notation as above.  Again, we can also sum over $e \gg 0$, and if $\fra = R$ or $t = 0$, then we simply denote the test module by $\tau(\omega_R)$.
    \end{definition}

    Observe if $R$ is Gorenstein (or more generally $\omega_R \cong R$) then we can identify $\tau(R, \fra^t)$ with $\tau(\omega_R, \fra^t)$.  Note also that our test ideal is the \emph{big} test ideal (as opposed to the finitistic test ideal), which in some literature was denoted by $\tld{\tau}(R, \fra^t)$ or by $\tau_b(R, \fra^t)$.   We will be primarily interested in the regular and hence Gorenstein case, where the big and finitistic test ideals also coincide.  

    We summarize some basic properties of test ideals and test modules that we will use.

    \begin{proposition}
        \label{prop.BasicPropertiesOfTestIdeals}
        Suppose $R$ is an $F$-finite Noetherian local domain of characteristic $p > 0$, $\fra, \frb \subseteq R$ are proper nonzero ideals with $\fra \subseteq \sqrt{J}$, and $t > 0$ is a real number.  Fix $f \in R$.  We state these for $\tau(R, -)$, but everything below also holds for $\tau(\omega_R)$.
        \begin{enumerate}
            \item If $\fra \subseteq \frb$ is an ideal, then $\tau(R, \fra^t) \subseteq \tau(R, \frb^t)$ and if $0 \leq s < t$, then $\tau(R, \fra^t) \subseteq \tau(R, \fra^s)$.\label{prop.BasicPropertiesOfTestIdeals.EasyContainments}%
            \item $f \tau(R) = \tau(R, f^1)$.  \label{prop.BasicPropertiesOfTestIdeals.EasySkoda}
            \item $\tau(R, \fra^t) = \tau(R, \fra^{t+\epsilon})$ for all $1 \gg \epsilon > 0$.  \label{prop.BasicPropertiesOfTestIdeals.PlusEpsilonSmall}
            \item $\tau(R, \fra^t) = \tau(R, (\fra^m)^{t/m})$.  \label{prop.BasicPropertiesOfTestIdeals.Unambiguity}
            \item $\tau(R, \fra^t) = \tau(R, \overline{\fra}^t)$.\label{prop.BasicPropertiesOfTestIdeals.IntegralClosureAgnostic}
        \end{enumerate}
    \end{proposition}
    \begin{proof}
        Property \autoref{prop.BasicPropertiesOfTestIdeals.EasyContainments} follows directly from the definition.  
        \autoref{prop.BasicPropertiesOfTestIdeals.EasySkoda} is \cite[Proposition 1.11(1)]{HaraYoshidaGeneralizationOfTightClosure}.  Part \autoref{prop.BasicPropertiesOfTestIdeals.PlusEpsilonSmall} can be found in \cite[Lemma 3.23]{BlickleSchwedeTakagiZhang}, see \cite[Corollary 2.16]{BlickleMustataSmithDiscretenessAndRationalityOfFThresholds} in the regular case.  (d) follows from (c) and the definition as we have $\fra^{\lceil (t+\epsilon)p^e \rceil} \subseteq (\fra^m)^{\lceil tp^e/m \rceil} \subseteq \fra^{\lceil tp^e \rceil}$ for any $1 \gg \epsilon > 0$ and all $e \gg 0$.
         
        Property (e) is implicit in \cite{HaraYoshidaGeneralizationOfTightClosure}, or see \cite[Lemma 2.27]{BlickleMustataSmithDiscretenessAndRationalityOfFThresholds} in the regular case.  It also proceeds as follows.  Note $\subseteq$ is clear.   The reverse containment follows from (c) as we can find $m$ such that $\overline{\fra^{n+m}} \subseteq \fra^n$ for all $n$.
    \end{proof}
}

\subsection{\BCM test ideals}
    \label{subsec.BCMTestIdeals}
    We will limit ourselves primarily to the complete local case.  We begin with the historical definition.

    \begin{definition}[\cite{RobinsonToricBCM,MurayamaSymbolicTestIdeal}, \cf \cite{MaSchwedePerfectoidTestideal,MaSchwedePerfectoidTestideal,HaraYoshidaGeneralizationOfTightClosure}]
        \label{def.TestIdealsViaMatlisDuality}
        Fix $(R, \fram)$ to be a complete local Noetherian domain of dimension $d$ with canonical module $\omega_R$ and let $\fra \subseteq R$ be an ideal.  Fix $B$ to be a \BCM $R^+$-algebra.  For $ t > 0$, we define the \BCM test module $\tau_B(\omega_R, \fra^t)$ to be
        \[
            \sum_{n>0} \sum_{f \in \fra^{\lceil tn \rceil}}\Big(\Image\big( H^d_{\fram}(R) \xrightarrow{f^{1/n}} H^d_{\fram}(B)\big)\Big)^{\vee}
        \]
         viewed as a submodule of $\omega_R$  
        where $-^{\vee}$ denotes Matlis dual $\Hom_R(-, E)$ and $E$ is an injective hull of $k = R/\fram$.  Note $f^{1/n} \in R^+ \subseteq B$ and the particular choice of root only differs by a unit in $R^+$ which does not impact the image.

        Suppose $R$ is additionally $\bQ$-Gorenstein.  We define $\tau_B(R, \fra^t)$ to be  
        \[
            \sum_{n>0} \sum_{f \in \fra^{\lceil tn \rceil}} \Big(\Image\big( H^d_{\fram}(\omega_R) \xrightarrow{f^{1/n}} H^d_{\fram}(B \otimes_R \omega_R)\big)\Big)^{\vee}.
        \]
        viewed as an ideal of $R \cong H^d_{\fram}(\omega_R)^\vee$.
    \end{definition}
    The definition of $\tau_B(R)$ makes sense without the $\bQ$-Gorenstein condition, but then there are other potential definitions that we do not know coincide (and there are even some issues in characteristic $p > 0$).  For some discussion of how these definitions compare in the case where $\tau_B(R) = R$ (the \BCM-regular case) see \cite[Definition 5.3.1]{CaiLeeMaSchwedeTuckerPerfectoidHilbertKunz}.

    We now state Matlis dual formulations of the above definitions.  These statements are well known to experts, but we include them for future reference of the reader.

    \begin{lemma}
        \label{lem.UnDualDefinitionOfTestIdeal}
        With notation as in \autoref{def.TestIdealsViaMatlisDuality} we have that 
        \[
            \tau_B(\omega_R, \fra^t) = \sum_{n>0} \sum_{f \in \fra^{\lceil tn \rceil}} \Image\big( f^{1/n}\Hom_R(B, \omega_R) \to \omega_R\big)
        \]
        and, in the case that $R$ is $\bQ$-Gorenstein, that 
        \[
            \tau_B(R, \fra^t) = \sum_{n>0} \sum_{f \in \fra^{\lceil tn \rceil}} \Image\big( f^{1/n}\Hom_R(B, R) \to R\big)
        \]
    \end{lemma}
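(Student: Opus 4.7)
The plan is to apply Matlis duality to the definition in \autoref{def.TestIdealsViaMatlisDuality} summand by summand. Since $E$ is injective, the functor $(-)^{\vee} = \Hom_R(-, E)$ is exact, and hence for any $R$-linear map $\phi \colon M \to N$ we have $(\Image \phi)^{\vee} = \Image(\phi^{\vee} \colon N^{\vee} \to M^{\vee})$, regarded as a submodule of $M^{\vee}$. The key observation is that $H^d_{\fram}(R) \xrightarrow{f^{1/n}} H^d_{\fram}(B)$ is just $H^d_{\fram}(-)$ applied to the $R$-linear map $R \to B$ sending $1 \mapsto f^{1/n}$, so its Matlis dual is a map $H^d_{\fram}(B)^{\vee} \to H^d_{\fram}(R)^{\vee}$ whose image (sitting inside $H^d_{\fram}(R)^{\vee}$) is the Matlis dual of the image appearing in the original definition.

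The next step is to identify these duals. In the complete local Cohen-Macaulay setting, which is standard for this class of problems, $H^d_{\fram}(R)^{\vee} = \omega_R$ by local duality. For $H^d_{\fram}(B)^{\vee}$, I would write $B = \colim M_i$ as the filtered colimit of its finitely generated $R$-submodules. Local cohomology commutes with filtered colimits, giving $H^d_{\fram}(B) = \colim H^d_{\fram}(M_i)$. Matlis dualizing converts this into the limit $H^d_{\fram}(B)^{\vee} = \lim H^d_{\fram}(M_i)^{\vee}$, and local duality for each finitely generated $M_i$ yields $H^d_{\fram}(M_i)^{\vee} = \Hom_R(M_i, \omega_R)$. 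Passing to the limit, using that $\Hom_R$ sends colimits in the first argument to limits, gives $H^d_{\fram}(B)^{\vee} = \Hom_R(B, \omega_R)$.

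Under these identifications, the Matlis dual of $H^d_{\fram}(R) \xrightarrow{f^{1/n}} H^d_{\fram}(B)$ becomes the $R$-linear map $\Hom_R(B, \omega_R) \to \omega_R$, $\phi \mapsto \phi(f^{1/n})$. Writing $f^{1/n} \cdot \phi$ for the map $b \mapsto \phi(f^{1/n} b)$, its value at $1$ is $\phi(f^{1/n})$, so the image is exactly $\Image\big(f^{1/n}\Hom_R(B, \omega_R) \to \omega_R\big)$ as stated in the lemma. Summing over $n$ and $f \in \fra^{\lceil tn \rceil}$ gives the first formula. The $\bQ$-Gorenstein statement follows by running the same argument with $R$ replaced by $\omega_R$ and $B$ replaced by $B \otimes_R \omega_R$: local duality yields $H^d_{\fram}(\omega_R)^{\vee} = R$, and Hom-tensor adjunction together with $\Hom_R(\omega_R, \omega_R) = R$ (valid for $R$ normal, in particular $\bQ$-Gorenstein) gives $H^d_{\fram}(B \otimes_R \omega_R)^{\vee} = \Hom_R(B, R)$.

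The main technical point throughout is the identification $H^d_{\fram}(B)^{\vee} = \Hom_R(B, \omega_R)$, since $B$ is typically not finitely generated over $R$; the limit-colimit argument above handles this cleanly by reducing to the finitely generated case, where local duality applies directly. Everything else is formal bookkeeping with Matlis duality and the fact that images commute with exact contravariant functors.
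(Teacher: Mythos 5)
Your proof is correct and follows the same overall strategy as the paper's: apply Matlis duality term-by-term, identify the duals, and then read off the image. The only difference is in how you justify the identification $H^d_{\fram}(B)^{\vee} \cong \Hom_R(B, \omega_R)$. You reduce to finitely generated submodules via filtered colimits and invoke local duality on each piece; the paper instead uses the fact that top local cohomology is right-exact and commutes with colimits, so $H^d_{\fram}(B) \cong B \otimes_R H^d_{\fram}(R)$, and then one application of Hom-tensor adjunction gives the identification in a single line. The latter is slightly cleaner and, importantly, makes no Cohen--Macaulay assumption on $R$: your appeal to ``local duality in the complete local Cohen--Macaulay setting'' imports a hypothesis that the lemma does not carry (the definition only requires a canonical module). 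In the paper's applications $R$ is regular so this is harmless, but the adjunction route avoids the issue entirely and is worth knowing. Your treatment of the $\bQ$-Gorenstein case via $\Hom_R(\omega_R, \omega_R) \cong R$ matches the paper's, which uses the equivalent fact $H^d_{\fram}(\omega_R) \cong E$.
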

    \begin{proof}
        For the first statement, it suffices to show that the Matlis dual of
        \[
            \Image\big( H^d_{\fram}(R) \xrightarrow{f^{1/n}} H^d_{\fram}(B))
        \]
        is the same as the image of the evaluation-at-$f^{1/n}$-map $\Hom_R(B, \omega_R) \to \omega_R$. 

        Since $R$ is complete, we know that $\Hom_R(H^d_{\fram}(R), E) \cong \omega_R$.  With $E$ as above, using adjointness,
        \[            
            \begin{array}{rcl}
            H^d_{\fram}(B)^{\vee} & = & \Hom_R(H^d_{\fram}(B), E) \\
            & \cong & \Hom_R(B \otimes H^d_{\fram}(R), E)\\
            & \cong & \Hom_R(B, \Hom_R(H^d_{\fram}(R), E))\\
            & \cong & \Hom_R(B, \omega_R).
            \end{array}
        \]
        As $E$ is injective, $(-)^{\vee} = \Hom_R(-, E)$ is exact, and we have a factorization
        \[
            \omega_R \cong H^d_{\fram}(R)^{\vee} \hookleftarrow \Image\big( H^d_{\fram}(R) \xrightarrow{f^{1/n}} H^d_{\fram}(B)\big)^{\vee} \twoheadleftarrow H^d_{\fram}(B)^{\vee} \cong \Hom_R(B, \omega_R).
        \]
        Tracing through the isomorphisms, this proves exactly what we wanted.

        
        For the second case, it suffices to show that 
        \[
            \Image\big( H^d_{\fram}(\omega_R) \xrightarrow{f^{1/n}} H^d_{\fram}(B \otimes \omega_R))
        \]
        is the same as the image of the evaluation-at-$f^{1/n}$-map $\Hom_R(B, R) \to R$.  The argument is the same as above except we notice $H^d_{\fram}(\omega_R) \cong E$, $\Hom_R(H^d_{\fram}(\omega_R), E) \cong R$, and that 
        \[ 
            \Hom_R(H^d_{\fram}(B \otimes \omega_R), E) \cong \Hom_R(B, \Hom_R(H^d_{\fram}(\omega_R), E)) \cong \Hom_R(B, R).
        \]        
    \end{proof}    

    We will find the following results useful.  
    \begin{theorem}[{\cite[Proposition 8.10]{BMPSTWW-RH}}]
        Suppose $(R, \fram)$ is a complete local domain of residue characteristic $p > 0$.  Fix $\fra \subseteq R$ an ideal and $t > 0$.  Set $B = \widehat{R^+}$ to be the $p$-adic completion of $R^+$ (which is \BCM by \cite{HochsterHunekeInfiniteIntegralExtensionsAndBigCM,BhattAbsoluteIntegralClosure}).  Set $\pi : X \to \Spec R$ to be a proper birational map, with $X$ normal, factoring through the blowup of $\fra$.  Fix $\fra \cO_X = \cO_X(-G)$.  Then for all $1 \gg \epsilon > 0$ we have that
        \[
            \tau_B(\omega_R, \fra^{t+\epsilon}) = \Image\Big( H^d_{\fram}(R) \to H^d_{\fram}(\myR \Gamma(X^+, \cO_{X^+}((t+\epsilon)G)))\Big)^{\vee}.
        \]        
        Here by $\cO_{X^+}((t+\epsilon)G)$ we mean the pullback to $X^+$ of $\cO_Y(h^*(t+\epsilon) G)$ for any finite map $h : Y \to X$ with $Y$ integral such that $h^*(t+\epsilon)G$ has integer coefficients.
    \end{theorem}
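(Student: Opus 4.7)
The plan is to unpack both sides of the claimed equality via Matlis duality and \autoref{lem.UnDualDefinitionOfTestIdeal}, and then to interpret the maps $\cdot f^{1/n}$ geometrically on $X^+$. From \autoref{lem.UnDualDefinitionOfTestIdeal}, the test module $\tau_B(\omega_R, \fra^{t+\epsilon})$ equals $\sum_{n,\; f \in \fra^{\lceil(t+\epsilon)n\rceil}} \Image\bigl(f^{1/n}\Hom_R(B,\omega_R) \to \omega_R\bigr)$, and under Matlis duality this submodule of $\omega_R$ is identified with the dual of the image of $H^d_\fram(R)$ in $\prod_{n,f} H^d_\fram(B)$ via the maps $\cdot f^{1/n}$. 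The claimed identity is therefore equivalent to the equality of kernels
\[
\bigcap_{n,\; f \in \fra^{\lceil(t+\epsilon)n\rceil}} \Ker\bigl(H^d_\fram(R) \xrightarrow{f^{1/n}} H^d_\fram(B)\bigr) \;=\; \Ker\bigl(H^d_\fram(R) \to H^d_\fram(\myR\Gamma(X^+, \cO_{X^+}((t+\epsilon)G)))\bigr).
\]

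For the containment $\supseteq$, the geometric key is that if $f \in \fra^{\lceil(t+\epsilon)n\rceil}$ then, since $\fra\cO_X = \cO_X(-G)$, we have $\Div(f) \geq \lceil(t+\epsilon)n\rceil\, G$ on $X$; pulling back to $X^+$ and taking an $n$-th root yields $\Div(f^{1/n}) \geq (t+\epsilon)G$. Hence multiplication by $f^{1/n}$ defines a map of sheaves $\cO_{X^+}((t+\epsilon)G) \to \cO_{X^+}$; applying $\myR\Gamma(X^+, -)$ and $H^d_\fram(-)$, and composing with the map induced by the completion $R^+ \to \widehat{R^+} = B$, produces a factorization
\[
H^d_\fram(R) \to H^d_\fram(\myR\Gamma(X^+, \cO_{X^+}((t+\epsilon)G))) \xrightarrow{\cdot f^{1/n}} H^d_\fram(\myR\Gamma(X^+, \cO_{X^+})) \to H^d_\fram(B)
\]
of $H^d_\fram(R) \xrightarrow{f^{1/n}} H^d_\fram(B)$. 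The kernel of the leftmost arrow is therefore contained in the kernel of each such composite, yielding $\supseteq$.

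The reverse containment is the main obstacle. Here I would argue that local sections of $\cO_{X^+}((t+\epsilon)G)$ are controlled by the finitely many Rees valuations of $\fra$ (via \autoref{lem.FractionalPowersNoetherianAlternateDescriptions}), and that the wiggle room between $\epsilon$ and a slightly smaller $\epsilon'$ lets one realize these sections, up to units, as elements of the form $f^{1/n}$ with $f \in \fra^{\lceil(t+\epsilon')n\rceil}$. The technical heart is a cohomological vanishing theorem on $X^+$ (after $p$-adic completion, in the spirit of \cite{BhattAbsoluteIntegralClosure, BMPSTWW-RH}) that lets $H^d_\fram(-)$ commute with a filtered colimit presentation of $\cO_{X^+}((t+\epsilon)G)$ via its $f^{1/n}$-generators, so that any nonzero class in $H^d_\fram(\myR\Gamma(X^+, \cO_{X^+}((t+\epsilon)G)))$ is detected by some individual $f^{1/n}$-multiplication map into $H^d_\fram(B)$. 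The small-$\epsilon$ stability is then built in from the fact that only finitely many Rees data control the picture at each finite level of $X^+$.
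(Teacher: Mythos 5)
This statement is not proved in the paper: it is quoted verbatim as \cite[Proposition~8.10]{BMPSTWW-RH} and used as a black box (the only proof-like remark near it concerns the \emph{next} theorem, and merely specializes a constant). So there is no in-paper proof to compare your attempt against; the relevant comparison would be with the argument in \cite{BMPSTWW-RH} itself.

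On the merits of your attempt: the translation of both sides into a kernel identity in $H^d_{\fram}(R)$ via Matlis duality is correct, as is the factorization
\[
H^d_\fram(R) \to H^d_\fram\big(\myR\Gamma(X^+, \cO_{X^+}((t+\epsilon)G))\big) \xrightarrow{\cdot f^{1/n}} H^d_\fram\big(\myR\Gamma(X^+, \cO_{X^+})\big) \to H^d_\fram(B),
\]
which yields $\tau_B(\omega_R,\fra^{t+\epsilon}) \subseteq \Image(\cdots)^\vee$ (once one knows $H^d_\fram\big(\myR\Gamma(X^+,\cO_{X^+})\big) \cong H^d_\fram(B)$, itself already a theorem of Bhatt/BMPSTWW). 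The reverse containment, however, is where essentially all of the content of the cited result lives, and your sketch there does not reach the ground. The claim that any class in $H^d_\fram\big(\myR\Gamma(X^+, \cO_{X^+}((t+\epsilon)G))\big)$ is detected by some single $f^{1/n}$-multiplication into $H^d_\fram(B)$ is not automatic: on a finite level $Y\to X$ the sheaf $\cO_Y(h^*(t+\epsilon)G)$ is not obviously globally generated by fractions $1/f^{1/n}$ with $f\in\fra^{\lceil(t+\epsilon)n\rceil}$, and even granting a suitable colimit presentation one must commute $H^d_\fram\circ\myR\Gamma$ past it, control higher direct images, and account for the $p$-adic completion — which is precisely the Bhatt/BMPSTWW vanishing technology. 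You gesture at this by invoking ``a cohomological vanishing theorem \ldots in the spirit of \cite{BhattAbsoluteIntegralClosure, BMPSTWW-RH}'', but that is the proof, not a step within it. Also, the remark that local sections of $\cO_{X^+}((t+\epsilon)G)$ are controlled by the finitely many Rees valuations of $\fra$ is misleading: Rees valuations control the integral-closure filtration $\fra_t$ in $R$, but local sections of a sheaf on $X^+$ involve all height-one valuations of $R^+$ over each point, not only the Rees ones. In short: the easy containment is fine, the hard containment is a placeholder for the external result being cited, and the attempt therefore does not constitute an independent proof.
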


    \begin{theorem}[{\cite[Theorem 8.11]{BMPSTWW-RH}}]
        \label{thm.MixedCharBlowupComparison}
        Suppose $(R, \fram)$ is a complete regular local domain of mixed characteristic $(0,p > 0)$.  Fix $\fra \subseteq R$ an ideal and $t > 0$.  Set $B$ to be a perfectoid \BCM $R^+$-algebra.  Set $\pi : X \to \Spec R$ to be a proper birational map with $X$ normal factoring through the blowup of $\fra$.  Fix $\fra \cO_X = \cO_X(-G)$.  
        Then for all $ 1 \gg \epsilon > 0$ we have that 
        \[
            \tau_B(R, p^{\epsilon}\fra^{t+\epsilon}) = \Image\Big( H^d_{\fram}(R) \to H^d_{\fram}(\myR \Gamma(X^+, \cO_{X^+}((t+\epsilon)G + \epsilon \Div_X p)))\Big)^{\vee}.
        \]
    \end{theorem}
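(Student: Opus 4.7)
The plan is to prove the equality of Matlis duals, i.e., to show that the submodule $\tau_B(R, p^\epsilon \fra^{t+\epsilon}) \subseteq R$ agrees with the Matlis dual of the image cohomology appearing on the right. Using the reformulation in \autoref{lem.UnDualDefinitionOfTestIdeal}, the left-hand side is a sum
\[
    \sum_{n>0}\sum_{f\in\fra^{\lceil (t+\epsilon)n\rceil}} \Image\bigl( p^{\lceil\epsilon n\rceil/n} f^{1/n} \Hom_R(B,R) \to R\bigr).
\]
On the right, the proper birational map $X\to\Spec R$ together with the local cohomology spectral sequence let me rewrite $H^d_{\fram}(\myR\Gamma(X^+, \cO_{X^+}((t+\epsilon)G+\epsilon\Div_X p)))$ as a direct limit (over finite covers $Y\to X$ clearing denominators of $(t+\epsilon)G+\epsilon\Div_X p$) of $H^d_{\fram}(\myR\Gamma(Y, \cO_Y(\lceil(t+\epsilon)h^*G+\epsilon\Div_Y p\rceil)))$. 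Taking Matlis duals turns the target sum into an intersection of cokernels, and this is the precise structure I want to match with the sum of images above.

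The key geometric observation is that an element $f\in\fra^{\lceil(t+\epsilon)n\rceil}$ is exactly a section of $\cO_X(-\lceil(t+\epsilon)n\rceil G)$, so after pullback to $X^+$ (and passing to a finite cover where $n$-th roots exist) the element $f^{1/n}$ is a section of $\cO_{X^+}(-\frac{\lceil(t+\epsilon)n\rceil}{n} G)$, vanishing to order $\geq t+\epsilon$ along $G$. Similarly $p^{\lceil\epsilon n\rceil/n}$ supplies a section vanishing to order $\geq \epsilon$ along $\Div_X p$. This gives a natural map from the sum side into the cohomology side, and one containment follows by tracing through Grothendieck/Matlis duality and the factorization $\Hom_R(B,R)\to R$ through sections of an appropriate pushforward. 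Conversely, every (almost) section of the twisted line bundle on $X^+$ can be approximated in the $p$-adic topology by finite $R$-linear combinations of such $p^{\lceil \epsilon n\rceil/n} f^{1/n}$, essentially because $R^+$ (and hence its $p$-adic completion, which maps to $B$) exhausts all finite normal covers of $X$.

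The main obstacle is glueing these two pictures together in mixed characteristic, and it is where perfectoidness of $B$ becomes essential. In positive characteristic the analogous statement (the prior theorem) needs no $p$-adic twist because finite covers by Frobenius roots are honest. In mixed characteristic one must pass through almost mathematics: the comparison between $\myR\Gamma(X^+, \cO_{X^+})$ and $B\otimes_R\myR\Gamma(X,\cO_X)$-type objects only holds up to $p^{1/p^\infty}$-torsion, and it is precisely the twist by $\epsilon\Div_X p$ that absorbs this error. Thus the technical core is to invoke the almost purity / flatness results (\`a la Andr\'e--Bhatt, as used in the cited \cite{BMPSTWW-RH}) to identify the $p^{\epsilon}$-perturbed image cohomology with the perfectoid BCM evaluation, and to check that the rounding in $\lceil \epsilon n\rceil/n \to \epsilon$ and $\lceil(t+\epsilon)n\rceil/n\to t+\epsilon$ introduces no additional loss once we stabilize for $n\gg 0$ and $\epsilon$ sufficiently small.
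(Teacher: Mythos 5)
The paper does not actually prove this statement: it is imported verbatim as a citation to \cite[Theorem 8.11]{BMPSTWW-RH}, and the only content of the paper's ``proof'' is the one-line observation that the constant $c$ appearing there may be taken to be $p$ because $R$ is regular. Your sketch, by contrast, attempts to reconstruct the proof of the cited theorem itself. The conceptual outline you give --- unwinding the Matlis-dual form of $\tau_B$ as a sum of images of $p^\epsilon f^{1/n}\Hom_R(B,R)\to R$, passing to a colimit over finite covers of $X$ that clear the denominators of $(t+\epsilon)G+\epsilon\Div_X p$, matching sections of $\cO_{X^+}(-\lceil(t+\epsilon)n\rceil/n\cdot G)$ with $n$-th roots of elements of $\fra^{\lceil(t+\epsilon)n\rceil}$, and invoking Andr\'e--Bhatt almost purity with the $\epsilon\Div_X p$ twist absorbing the almost-mathematics error --- is in the spirit of the actual argument in \cite{BMPSTWW-RH}, but it is a proof of a black box that this paper deliberately treats as a black box. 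What each approach buys: the paper's citation is efficient and avoids re-proving a long, hard theorem; your reconstruction is pedagogically more informative but, as written, the two hardest steps are asserted rather than verified, namely the $p$-adic approximation of arbitrary sections of the twisted bundle on $X^+$ by combinations of $p^{\epsilon}f^{1/n}$, and the precise almost-isomorphism $\myR\Gamma(X^+,\cO_{X^+})\simeq B$-side comparison. Also note a small slip in your opening display: the definition of $\tau_B(R,p^\epsilon\fra^{t+\epsilon})$ uses a \emph{fixed} $p^\epsilon\in R^+$ (with $\epsilon\in\bQ$), not a varying $p^{\lceil\epsilon n\rceil/n}$; these agree after stabilizing in $\epsilon$ but the distinction matters when tracing the roundings, and it is exactly the roundings that the $\epsilon$-perturbations are designed to control. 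Finally, your sketch never engages with the constant $c$ in \cite[Theorem 8.11]{BMPSTWW-RH} at all, which is the one piece of actual content in the paper's proof --- the observation that regularity of $R$ lets one take $c = p$.
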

    \begin{proof}
        The result found in \cite[Theorem 8.11]{BMPSTWW-RH} has a constant which we may take as $c = p$ as $R$ is regular.  
    \end{proof}
    
    We conclude with a definition of jumping numbers of \BCM test ideals.

    \begin{definition}
        With notation as above and in particular $\bQ$-Gorenstein $R$, we define the $t > 0$ to be a \emph{$\BCM_B$-jumping number of $\fra \subseteq R$} if $\tau_B(R, \fra^{t-\epsilon}) \neq \tau_B(R, \fra^{t+\epsilon})$ for $1 \gg \epsilon > 0$.  
    \end{definition}
    Note, in characteristic $p > 0$, then this agrees with the usual notion of $F$-jumping number by a small modification of \cite[Corollary 6.23]{MaSchwedeSingularitiesMixedCharBCM}.  In mixed characteristic, if $B$ is perfectoid and sufficiently large, we have that $\tau_B(R, \fra^{t+\epsilon}) = \tau_B(R, \fra^{t})$ for all $1 \gg \epsilon > 0$ by \cite{BMPSTWW-RH}.  Hence, in either case, $t$ is a $\BCM_B$-jumping number of $\fra \subseteq R$ if and only if $\tau_B(R, \fra^{t-\epsilon}) \neq \tau_B(R, \fra^{t})$.

\section{$F$-thresholds via perfection and big Cohen-Macaulay algebras}

Our goal in this section is to explore the notion of the $F$-threshold in the style of \cite{RodriguezBCMThresholdsHypersurfaces}.  

We first pass from iterated Frobenius to the perfection.

\begin{proposition}
    Suppose $R$ is a Noetherian domain  of characteristic $p > 0$ and $J, \fra \subseteq R$ are proper nonzero ideals with $\fra \subseteq \sqrt{J}$.  Then 
    \[
        \tld c^J(\fra) = \sup\Big\{{n \over p^e} > 0 \;\Big|\; (\fra^{n})^{1/p^e} \nsubseteq JR_{\perf}\Big\}. 
    \]        
\end{proposition}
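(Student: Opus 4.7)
The plan is to translate the condition $(\fra^{n})^{1/p^e} \nsubseteq JR_\perf$ into the Frobenius closure condition $\fra^n \nsubseteq (J^{[p^e]})^F$ defining $\tld\nu_\fra^J(p^e)$, and then to check that the resulting supremum agrees with the limit $\tld c^J(\fra)$.

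For the first step I would establish the following key equivalence: for any ideal $I \subseteq R$ and any $e \geq 0$,
\[
    I^{1/p^e} \subseteq JR_\perf \iff I \subseteq (J^{[p^e]})^F.
\]
Since $R_\perf = \bigcup_{d \geq 0} R^{1/p^{e+d}}$ and $I$ is finitely generated, the left-hand side is equivalent to requiring that for each generator $f$ of $I$ one has $f^{1/p^e} \in JR^{1/p^{e+d}}$ for some $d \geq 0$ (a single $d$ works for all generators by taking a maximum). Raising such a relation to the $p^{e+d}$-th power and using that the resulting identity lies in $R$, this becomes $f^{p^d} \in J^{[p^{e+d}]} = (J^{[p^e]})^{[p^d]}$, which is precisely $f \in (J^{[p^e]})^F$; the converse direction is entirely analogous, extracting $p^{e+d}$-th roots in $R_\perf$. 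Applying this to $I = \fra^n$ yields
\[
    \sup\Big\{{n\over p^e} \;\Big|\; (\fra^n)^{1/p^e} \nsubseteq JR_\perf\Big\} = \sup_{e \geq 0} \frac{\tld\nu_\fra^J(p^e)}{p^e}.
\]

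For the second step, since the limit $\tld c^J(\fra)$ is known to exist by \cite[Page 6]{HunekeMustataTakagiWatanabeFThresholdsTightClosureIntClosureMultBounds}, it suffices to show the sequence $e \mapsto \tld\nu_\fra^J(p^e)/p^e$ is nondecreasing, so that its limit equals its supremum. To see monotonicity, I would pick $x \in \fra^n$ with $x \notin (J^{[p^e]})^F$ and argue that $x^{p^d} \in \fra^{n p^d}$ cannot lie in $(J^{[p^{e+d}]})^F$: otherwise there would exist $d'$ with $x^{p^{d+d'}} \in J^{[p^{e+d+d'}]}$, forcing $x \in (J^{[p^e]})^F$, a contradiction. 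Hence $\fra^{np^d} \nsubseteq (J^{[p^{e+d}]})^F$ whenever $\fra^n \nsubseteq (J^{[p^e]})^F$, which gives $\tld\nu_\fra^J(p^{e+d}) \geq p^d\, \tld\nu_\fra^J(p^e)$ and therefore monotonicity of the ratios.

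The proof is largely bookkeeping rather than substantively deep; the only mild subtlety is that a ``$p^e$-th root'' is well-defined only up to a unit in $R^+$, and that finite generation of $\fra^n$ is what allows one to collapse the ascending union $\bigcup_d JR^{1/p^{e+d}}$ into a single $JR^{1/p^{e+d}}$ for uniform $d$. Once these are in hand, the well-known identification $(J^{[p^e]})^F = J^{[p^e]} R_\perf \cap R$ makes the key equivalence transparent, and the monotonicity argument closes out the proof.
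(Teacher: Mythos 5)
Your proof is correct and follows essentially the same approach as the paper: translate $(\fra^n)^{1/p^e} \nsubseteq JR_\perf$ into $\fra^n \nsubseteq (J^{[p^e]})^F$ by identifying $R^{1/p^{e+d}}$ with $R$ via the Frobenius, then use that $\tld\nu_\fra^J(p^e)/p^e$ is nondecreasing to identify the limit with the supremum. The paper states the key equivalence more tersely (``by change of notation'' and ``by the definition of Frobenius closure'') and cites Huneke--Musta\c{t}\u{a}--Takagi--Watanabe for the monotonicity fact, whereas you spell out both the finite-generation collapse of the union $\bigcup_d JR^{1/p^{e+d}}$ and the monotonicity argument directly; these are just expository choices and do not change the substance.
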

    Note first that $(\fra^n)^{1/p^e} \subseteq (\fra^{np^d})^{1/p^{e+d}}$ and so if ${n \over p^e}$ is in the set we are taking the supremum of, so is ${np^d \over p^{e+d}}$, which removes some worry about redundancy.  It is a variation on this observation that makes the existence of $\tld c^J(\fra)$ particularly easy to prove as the $\nu_{\fra}(p^e)/p^e$ are ascending \cite{HunekeMustataTakagiWatanabeFThresholdsTightClosureIntClosureMultBounds}.
\begin{proof}    
    By change of notation, $\fra^n \nsubseteq (J^{[p^e]})^F$ if and only if $(\fra^n)^{1/p^e} \nsubseteq (JR^{1/p^e})^F$ which occurs if and only if $(\fra^n)^{1/p^e} \nsubseteq JR_{\perf}$ by the definition of Frobenius closure.  In other words, $\tld \nu_{\fra}^J(p^e) = \sup\{n \;|\; (\fra^n)^{1/p^e} \nsubseteq JR_{\perf}\}$.  The result follows.
\end{proof}

\begin{proposition}
    Suppose $R$ is a complete local Noetherian domain of characteristic $p > 0$ and $B$ is a perfect \BCMTight $R$-algebra, then with $\fra$ and $J$ as above, 
    \[
        c_*^J(\fra) = \sup\Big\{ {n \over p^e} > 0 \;|\; (\fra^n)^{1/p^e} \nsubseteq JB \}.
    \]
\end{proposition}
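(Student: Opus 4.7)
The plan is to translate the defining containments of $c_*^J(\fra)$ from statements inside $R$ into containments inside $B$ using the \BCMTight property, and then to use perfectness of $B$ to convert $p^e$-th Frobenius powers into $p^e$-th roots.  Since $B$ captures tight closure for $R$, for every $e$ we have $(J^{[p^e]})^* = J^{[p^e]}B \cap R$, so
\[
    \nu_\fra^{*J}(p^e) \;=\; \max\{n \;|\; \fra^n \nsubseteq J^{[p^e]}B\}.
\]

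The key step will be to establish, for any ideal $I \subseteq R$ and any $x \in R$, the equivalence $x \in I^{[p^e]}B \;\Longleftrightarrow\; x^{1/p^e} \in IB$, where $x^{1/p^e}$ denotes the unique $p^e$-th root in $B$ (well-defined because $B$ is perfect, hence also reduced since Frobenius is injective on a perfect ring).  The direction $(\Leftarrow)$ is immediate from $(\sum j_i c_i)^{p^e} = \sum j_i^{p^e} c_i^{p^e}$ in characteristic $p$.  For $(\Rightarrow)$, write $x = \sum j_i^{p^e} b_i$ with $j_i \in I$ and $b_i \in B$, use perfectness to write $b_i = c_i^{p^e}$, and observe $x = \bigl(\sum j_i c_i\bigr)^{p^e}$, so that $x^{1/p^e} = \sum j_i c_i \in IB$.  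Applying this with $I = J$ as $x$ ranges over $\fra^n$ yields
\[
    \fra^n \subseteq J^{[p^e]}B \ \Longleftrightarrow\ (\fra^n)^{1/p^e} \subseteq JB,
\]
and hence $\nu_\fra^{*J}(p^e) = \max\{n \;|\; (\fra^n)^{1/p^e} \nsubseteq JB\}$.

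To finish, I will show $c_*^J(\fra) = \sup_e \nu_\fra^{*J}(p^e)/p^e$; the stated formula then follows since $\sup_e \max_n\{n/p^e : (\fra^n)^{1/p^e} \nsubseteq JB\} = \sup\{n/p^e \;|\; (\fra^n)^{1/p^e} \nsubseteq JB\}$.  Following the argument in the commented-out lemma, if $u \in \fra^n \setminus (J^{[p^e]})^*$, then by \cite[Lemma 8.16]{HochsterHunekeTC1} we have $u^{p^d} \notin (J^{[p^{e+d}]})^*$ for all $d \gg 0$, and since $u^{p^d} \in \fra^{np^d}$ this gives $\nu_\fra^{*J}(p^{e+d})/p^{e+d} \geq n/p^e$.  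Taking $n = \nu_\fra^{*J}(p^e)$ yields $\liminf_{e'} \nu_\fra^{*J}(p^{e'})/p^{e'} \geq \nu_\fra^{*J}(p^e)/p^e$ for every $e$, so $\liminf \geq \sup$; combined with the existence of the limit \cite[Lemma 8.6]{MengMukhopadhyay.hFunctionHKDensityFrobeniusPoincare}, this forces $\lim = \sup$.  The only subtle point is the $p^e$-th root translation in the middle paragraph, which relies on both perfectness and reducedness of $B$; the rest is bookkeeping.
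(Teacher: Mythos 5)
Your proof is correct and takes essentially the same route as the paper. The paper phrases the key equivalence as $\fra^n \nsubseteq (J^{[p^e]})^* \iff (\fra^n)^{1/p^e} \nsubseteq (JR^{1/p^e})^* = JB\cap R^{1/p^e} \iff (\fra^n)^{1/p^e}\nsubseteq JB$, which unpacked is exactly your translation lemma $x\in I^{[p^e]}B \iff x^{1/p^e}\in IB$ (both hinge on Frobenius being bijective on perfect $B$) combined with the \BCMTight contraction; and your final paragraph using Hochster--Huneke persistence to get $\lim=\sup$ is the same argument the paper gestures at in the remark preceding the analogous statement for $\tld c^J$ (one can also get it more cheaply from $(\fra^n)^{1/p^e}\subseteq(\fra^{np^d})^{1/p^{e+d}}$, which shows $\nu_\fra^{*J}(p^e)/p^e$ is nondecreasing once you have the $B$-characterization).
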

\begin{proof}
    Note $\fra^n \nsubseteq (J^{[p^e]})^*$ if and only if $(\fra^n)^{1/p^e} \nsubseteq (J R^{1/p^e})^* = J B \cap R^{1/p^e}$ which happens if and only if $(\fra^n)^{1/p^e} \nsubseteq JB$.  This finishes the proof.
\end{proof}

When we generalize this to non-$p$-characteristic, we will not have a notion of taking $p$th roots or taking fractional powers of ideals in general.  Fortunately, as mentioned in the introduction, there already is a way to take a fractional power of an ideal up to integral closure.  Thus, for us, it is important to understand how integral closure behaves in this context.

\begin{proposition}
    \label{prop.IntegralClosureStillUsingPePowers}
    Suppose $R$ is a complete local Noetherian domain of characteristic $p > 0$ and $\fra, J \subseteq R$ are as above.  Then 
    \[
        \tld c^J(\fra) = \sup\Big\{ {n \over p^e} > 0 \;\Big|\; (\overline{\fra^n})^{1/p^e} \nsubseteq JR_{\perf} \Big\}
    \]
    and if $B$ is a \BCMTight $R$-algebra, then 
    \[
        c_*^J(\fra) = \sup\Big\{ {n \over p^e} > 0 \;\Big|\; (\overline{\fra^n})^{1/p^e} \nsubseteq JB \Big\}.
    \]    
\end{proposition}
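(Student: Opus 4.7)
I will reduce both displayed equations to the two preceding propositions by comparing $\overline{\fra^n}$ with $\fra^{n-l}$ and $\fra^n$ via Rees's theorem.

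The ``$\geq$'' direction is immediate from $\fra^n \subseteq \overline{\fra^n}$: if $(\fra^n)^{1/p^e} \nsubseteq JR_{\perf}$ (respectively $JB$), then the same failure persists for $(\overline{\fra^n})^{1/p^e}$, so the relevant sup is at least $\tld c^J(\fra)$ (respectively $c_*^J(\fra)$).

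For the ``$\leq$'' direction, the key input is Rees's theorem: a complete local Noetherian domain is analytically unramified, hence $\overline{R[\fra t]}$ is finitely generated as an $R[\fra t]$-module (see \cite[Chapter 9]{HunekeSwansonIntegralClosure}), and so there is a uniform $l \geq 0$ with $\overline{\fra^n} \subseteq \fra^{n-l}$ for every $n \geq l$. Repeating the translations used in the preceding two propositions yields
\[
(\overline{\fra^n})^{1/p^e} \nsubseteq JR_{\perf} \Longleftrightarrow \overline{\fra^n} \nsubseteq (J^{[p^e]})^F
\]
and, using that $B$ is a perfect \BCMTight $R$-algebra,
\[
(\overline{\fra^n})^{1/p^e} \nsubseteq JB \Longleftrightarrow \overline{\fra^n} \nsubseteq (J^{[p^e]})^*.
\]
Writing $\overline{\tld\nu}_e := \max\{n : \overline{\fra^n} \nsubseteq (J^{[p^e]})^F\}$ and $\overline{\nu}^{*}_e := \max\{n : \overline{\fra^n} \nsubseteq (J^{[p^e]})^*\}$, any witness $x$ of the failure on the right (for $n \geq l$) already lies in $\fra^{n-l}$ by Rees, which gives
\[
\tld\nu_\fra^J(p^e) \leq \overline{\tld\nu}_e \leq \tld\nu_\fra^J(p^e) + l, \qquad \nu_e^{*J}(\fra) \leq \overline{\nu}^{*}_e \leq \nu_e^{*J}(\fra) + l.
\]

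To finish, I verify monotonicity so that each integrally-closed sup coincides with its limit. If $x \in \overline{\fra^n}$ is outside $(J^{[p^e]})^F$ (respectively $(J^{[p^e]})^*$), then raising an equation of integral dependence to the $p$-th power shows $x^p \in \overline{\fra^{np}}$, while the standard compatibility of Frobenius (respectively tight) closure with $p$-th powers keeps $x^p$ outside $(J^{[p^{e+1}]})^F$ (respectively $(J^{[p^{e+1}]})^*$). Hence $\overline{\tld\nu}_e/p^e$ and $\overline{\nu}^{*}_e/p^e$ are non-decreasing, so each sup is a limit; dividing the sandwich by $p^e$ and letting $e \to \infty$ then pins those limits down as $\tld c^J(\fra)$ and $c_*^J(\fra)$, respectively.

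The only delicate point is securing the uniform Rees-style bound $\overline{\fra^n} \subseteq \fra^{n-l}$; after that, the argument is bookkeeping built on the two preceding propositions.
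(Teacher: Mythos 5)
Your proposal is correct and follows essentially the same strategy as the paper's proof. Both hinge on a uniform bound $\overline{\fra^{n}} \subseteq \fra^{n-l}$ (you invoke Rees's theorem via analytic unramification of the complete local domain; the paper cites Huneke's uniform bounds theorem — the same tool), and both then transfer the problem to the two preceding propositions; your sandwich of $\nu$-counts plus monotonicity is just a slightly different packaging of the paper's argument via nested sets and the observation that $(\fra^j)^{1/p^e} \subseteq (\fra^{p^d j})^{1/p^{e+d}}$ pushes the supremum to $n, p^e \gg 0$ where the uniform shift by $l$ (or $k$) is negligible.
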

\begin{proof}
    Fix a constant $k$ such that 
    \[
        \fra^{n+k} \subseteq \overline{\fra^{n+k}} \subseteq {\fra^n} 
    \]
    for all $n \geq 0$ (in fact, by \cite[Theorem 4.13]{HunekeUniformBoundsInNoetherian}, such a constant even exists independently of $\fra$, but we will not need that).   
    It follows that 
    \[
        \Big\{ {n \over p^e} > 0 \;\Big|\; ({\fra^n})^{1/p^e} \nsubseteq JR_{\perf} \Big\} 
        \supseteq 
        \Big\{ {{n - k} \over p^e} > 0 \;\Big|\; (\overline{\fra^{n}})^{1/p^e} \nsubseteq JR_{\perf} \Big\}
        \supseteq
        \Big\{ {n - k \over p^e} > 0 \;\Big|\; ({\fra^{n}})^{1/p^e} \nsubseteq JR_{\perf} \Big\}.
    \]
    As $({\fra^j})^{1/p^e} \subseteq ({\fra^{p^d j}})^{1/p^{e+d}}$, the supremum of the leftmost term is computed by $n,p^e \gg 0$.  As $k$ is constant, the suprema of the left and right sets then coincide, and coincide with the middle supremum.  Similarly, as $(\overline{\fra^j})^{1/p^e} \subseteq (\overline{\fra^{p^d j}})^{1/p^{e+d}}$, the middle supremum likewise also equals   
    \[
        \sup \Big\{ {{n} \over p^e} > 0 \;\Big|\; (\overline{\fra^{n}})^{1/p^e} \nsubseteq JR_{\perf} \Big\}
    \]
    This proves the first case.  The same argument proves the second statement by replacing $R_{\perf}$ with $B$.      
\end{proof}

    We now consider what happens for finite extensions.  Recall by \cite[Proposition 2.2(a)]{HunekeMustataTakagiWatanabeFThresholdsTightClosureIntClosureMultBounds} that if $R \subseteq S$ is pure, for instance if it splits, then $c^J(\fra) = c^{JS}(\fra S)$.  This result also holds for the tight closure $F$-threshold.
\begin{lemma}
    \label{lem.ThresholdFiniteIntegralExtension}
    Fix $R$ a complete local Noetherian domain of characteristic $p > 0$ with a finite extension $R \subseteq S \subseteq R^+$.  Set $B$ to be a perfect Cohen-Macaulay $R^+$-algebra that captures both $R$ and $S$ tight closure (see \autoref{lem.BCMTightExist}).  Then 
    \[
        c_*^J(\fra) = c_{*}^{JS}(\fra S).
    \]
    In particular, 
    \[
        c_*^J(\fra) = \sup\Big\{ {n \over p^e} > 0 \;\Big|\; (\overline{\fra^n S})^{1/p^e} \nsubseteq JB \Big\}.
    \]
\end{lemma}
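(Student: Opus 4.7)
The plan is to reduce the equality $c_*^J(\fra) = c_*^{JS}(\fra S)$ to a direct comparison of two suprema using the characterizations developed earlier in this section. Since $B$ is a perfect \BCMTight $R$-algebra, the proposition immediately preceding \autoref{prop.IntegralClosureStillUsingPePowers} yields
\[
    c_*^J(\fra) = \sup\Big\{ n/p^e \;\Big|\; (\fra^n)^{1/p^e} \nsubseteq JB \Big\}.
\]
A finite integral extension of a complete local Noetherian domain is again complete local Noetherian (completeness and semilocality pass to the finite module $S$, and the domain hypothesis collapses any product decomposition). Hence $S$ satisfies the hypotheses of that same proposition, and using the assumption that $B$ captures tight closure for $S$ I would obtain
\[
    c_*^{JS}(\fra S) = \sup\Big\{ n/p^e \;\Big|\; (\fra^n S)^{1/p^e} \nsubseteq (JS)B \Big\}.
\]

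Next, I would show the two sets inside the suprema coincide. The key input is that $S \subseteq R^+ \subseteq B$, so $(JS)B = JB$. Using perfectness of $B$, one establishes the chain of equivalences
\[
    (\fra^n)^{1/p^e} \subseteq JB
    \;\iff\; \fra^n \subseteq J^{[p^e]} B
    \;\iff\; \fra^n S \subseteq J^{[p^e]} B
    \;\iff\; (\fra^n S)^{1/p^e} \subseteq JB.
\]
The outer equivalences come from raising to $p^e$-th powers in one direction and extracting the (unique) $p^e$-th roots in $B$ in the other; the middle equivalence is immediate since $\fra^n \subseteq \fra^n S$ and $J^{[p^e]} B$ is an ideal of $B \supseteq S$. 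This identifies the two supremum sets, giving the first equality.

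For the ``In particular'' statement, I would apply \autoref{prop.IntegralClosureStillUsingPePowers} with $R$ replaced by $S$ (again legitimate since $S$ is a complete local Noetherian domain and $B$ is \BCMTight over it) to rewrite
\[
    c_*^{JS}(\fra S) = \sup\Big\{ n/p^e \;\Big|\; (\overline{\fra^n S})^{1/p^e} \nsubseteq JB \Big\},
\]
and then combine with the first equality. I do not anticipate a real obstacle here: the substantive work has been packaged into the two preceding propositions and into \autoref{lem.BCMTightExist}, which guarantees the existence of a single $B$ that is simultaneously \BCMTight over $R$ and over every intermediate finite extension $S$. The only thing that warrants care is the bookkeeping step verifying $S$ inherits the ``complete local Noetherian domain'' hypothesis needed to invoke the earlier propositions.
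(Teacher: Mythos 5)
Your proof is correct and takes essentially the same route as the paper: both arguments reduce to the observation that $(\fra^n)^{1/p^e} \subseteq JB$ iff $(\fra^n S)^{1/p^e} \subseteq JB$ (which the paper states tersely and you unpack via the Frobenius on the perfect ring $B$), then invoke the two preceding propositions for $R$ and $S$ respectively. The only small point you gloss over is that $B$ being \BCMTight over $S$ requires $B$ to be BCM over $S$, not just to capture $S$-tight closure; this is supplied by the paper's lemma that a finite extension $S$ of $R$ has the same BCM algebras.
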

\begin{proof}
    We notice that $(\fra^i)^{1/p^e} \subseteq JB$ if and only if $((\fra S)^i)^{1/p^e} \subseteq JB$ (in fact, if and only if $((\fra B)^i)^{1/p^e} \subseteq JB$), hence
    \[
        c_*^J(\fra) = \sup\Big\{ {n \over p^e} \;\Big|\; (\fra^n S)^{1/p^e} \nsubseteq JB \Big\} = c_*^{JS}(\fra S)
    \]
    proving the first statement.
    This agrees with $\sup\Big\{ {n \over p^e} > 0 \;\Big|\; (\overline{\fra^n S})^{1/p^e} \nsubseteq JB \Big\}$ by \autoref{prop.IntegralClosureStillUsingPePowers}.  
\end{proof}

\subsection{An interpretation via integral closure and fractional powers}
\label{subsec.RelationWithIntegralClosureAndFrationalPowers}

See \autoref{subsec.FractionalIntegrallyClosedPowers} for a brief introduction to the notion of fractional integrally closed powers.

Using our characterizations of the $F$-threshold above, we obtain the following.
\begin{theorem}
    \label{thm.FThresholdTheoremViaR+Theorem}
    Suppose $R$ is an $F$-finite Noetherian complete local domain of characteristic $p > 0$.  Set $B$ to be a perfect \BCMTight $R^+$-algebra.  
    Then for $\fra, J \subseteq R$ as above, we obtain that:
    \[
        c_*^J(\fra) = \sup \Big\{ t > 0 \;\Big|\; (\fra S)_{t} \nsubseteq JB \text{ for some finite $R \subseteq S \subseteq R^+$}\Big\}.
    \]
\end{theorem}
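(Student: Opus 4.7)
The plan is to bootstrap from the characterization of $c_*^J(\fra)$ given in \autoref{prop.IntegralClosureStillUsingPePowers} and \autoref{lem.ThresholdFiniteIntegralExtension}, which express the threshold in terms of whether $(\overline{\fra^n S})^{1/p^e} \nsubseteq JB$, and then exploit the identity $(\fra S)_{n/p^e} = (\overline{\fra^n S})^{1/p^e} \cap S$ coming from \autoref{eq.RationalPowerIdealPowers} applied in the Noetherian ring $S$. Concretely, fix any finite $R \subseteq S \subseteq R^+$; by \autoref{lem.ThresholdFiniteIntegralExtension},
\[
  c_*^J(\fra) \;=\; c_*^{JS}(\fra S) \;=\; \sup\Big\{\tfrac{n}{p^e} \;\Big|\; (\overline{\fra^n S})^{1/p^e} \nsubseteq JB\Big\},
\]
and since this value does not depend on $S$, one may equally well let $S$ range over finite extensions in the supremum.

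For the ``$\geq$'' direction, given $n/p^e$ and a finite $S$ witnessing $(\overline{\fra^n S})^{1/p^e} \nsubseteq JB$, choose $x \in R^+$ with $x^{p^e} \in \overline{\fra^n S}$ and $x \notin JB$. Enlarging to $S' := S[x] \subseteq R^+$ gives another finite extension of $R$, and $x^{p^e} \in \overline{\fra^n S} \subseteq \overline{(\fra S')^n}$, so $x \in (\fra S')_{n/p^e}$ by \autoref{eq.RationalPowerIdealPowers}. Hence $(\fra S')_{n/p^e} \nsubseteq JB$, and $n/p^e$ contributes to the right-hand supremum.

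For the ``$\leq$'' direction, suppose $t > 0$ and $(\fra S)_t \nsubseteq JB$ for some finite $S$. For each rational $n/p^e \leq t$, the monotonicity $(\fra S)_t \subseteq (\fra S)_{n/p^e}$ (clear from the valuative description in \autoref{def.FractionalPowerInNoetherianRing}) gives $(\fra S)_{n/p^e} \nsubseteq JB$. Any witness $y \in (\fra S)_{n/p^e} \setminus JB$ satisfies $y^{p^e} \in \overline{(\fra S)^n} = \overline{\fra^n S}$, so $y \in (\overline{\fra^n S})^{1/p^e}$, and therefore $n/p^e \leq c_*^J(\fra)$ by the reduction above. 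Since $p$-adic rationals are dense in $\bR_{\geq 0}$, taking the supremum over $n/p^e < t$ yields $t \leq c_*^J(\fra)$, which completes the proof.

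The main obstacle here is largely bookkeeping: one must be careful that enlarging $S$ to extract a needed $p^e$-th root $x \in R^+$ keeps us inside the class of finite extensions $R \subseteq S \subseteq R^+$ (for which \autoref{lem.ThresholdFiniteIntegralExtension} applies), and one must handle real (as opposed to merely rational) values of $t$ via monotonicity and density of the allowed $n/p^e$. The genuine content has already been packaged into \autoref{prop.IntegralClosureStillUsingPePowers}, \autoref{lem.ThresholdFiniteIntegralExtension}, and \autoref{eq.RationalPowerIdealPowers}; what remains is to glue these together with the ``extract a root'' trick of enlarging $S$ to $S[x]$.
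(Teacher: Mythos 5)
Your proposal is correct, and it arrives at the theorem by a slightly different technical route than the paper. Both proofs reduce to the characterization $c_*^J(\fra) = \sup\{ n/p^e \mid (\overline{\fra^n S})^{1/p^e} \nsubseteq JB\}$ from \autoref{lem.ThresholdFiniteIntegralExtension}, and both rest on the same fact that $p^e$th Frobenius roots of integral closures realize fractional integral-closure powers. The paper makes this explicit by passing to the (automatically finite) extension $S^{1/p^e}$ and observing, via valuations, that $\overline{((\fra S)^n)}^{1/p^e} = (\fra S^{1/p^e})_{n/p^e}$; it then organizes the sets $T_S = \{t : (\fra S^{1/p^{e'}})_t \nsubseteq JB \text{ for some } e'\}$ and argues $T = \bigcup_S T_S$ with all the $T_S$ sharing the same supremum. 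You instead never write down $S^{1/p^e}$: you extract a witness $x \in R^+ \setminus JB$ with $x^{p^e} \in \overline{\fra^n S}$, enlarge to $S' := S[x]$, and note $x \in (\fra S')_{n/p^e}$ by \autoref{eq.RationalPowerIdealPowers}; the reverse inclusion uses the same identity in the opposite direction. The adjoin-a-root trick has the small advantage of being elementary and localized (you only enlarge $S$ by the single element you need), while the paper's passage through $S^{1/p^e}$ gives a cleaner global identity between Frobenius roots and fractional powers. Your bookkeeping for real $t$ via density of the $p$-power rationals is fine, matching what the paper does, and the step $\overline{\fra^n S} \subseteq \overline{\fra^n S'}$ needed to verify $x \in (\fra S')_{n/p^e}$ is correct (an equation of integral dependence over $\fra^n S$ is also one over $\fra^n S'$). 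No gaps.
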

\begin{proof}
    Set $T$ to be the set we are taking the supremum of in the statement of the theorem.  
    For a fixed, $R \subseteq S \subseteq R^+$,  note that $\overline{((\fra S)^n)}^{1/p^e}=\overline{((\fra^{1/p^e} S^{1/p^e})^n)}$ since $S$ is abstractly isomorphic to $S^{1/p^e}$ via the $p$th root function.  On the other hand, for any valuation $\nu$ of $K(S^{1/p^e})$ over $S^{1/p^e}$, $\nu((\fra^{1/p^e} S^{1/p^e})^n)=\frac{n}{p^e}\nu(\fra S^{1/p^e})$ and so $\overline{((\fra S)^n)}^{1/p^e}=(\fra S^{1/p^e})_{n/p^e}$ again using the abstract isomorphism $S \cong S^{1/p^e}$.
    
    It immediately follows from \autoref{lem.ThresholdFiniteIntegralExtension} that  
    \[
        c_*^J(\fra) = \sup \Big\{ {n \over p^e} >0 \;\Big| (\fra S^{1/p^e})_{n/p^e} \nsubseteq JB \Big\}.
    \]
    Note, if $(\fra S^{1/p^e})_{t} \nsubseteq JB$, then we also have that the even larger set $(\fra S^{1/p^{e'}})_{t} \nsubseteq JB$ for $e' \geq e$.  
    Combining this with the fact that we can write any ${n / p^e} = {np^{e'-e} / p^{e'}}$ we see that 
    \[
        c_*^J(\fra) = \sup \Big\{ {n \over p^e}>0 \;\Big| (\fra S^{1/p^{e'}})_{n/p^e} \nsubseteq JB \;\text{ for some $e'$}\Big\}.
    \]    
    Using again that $(\fra S^{1/p^e})_{t} \subseteq (\fra S^{1/p^e})_{t'}$ for $t' \leq t$, and the fact that the numbers of the form $n/p^e$ are dense, we obtain that
    \[
        c_*^J(\fra) = \sup \Big\{ {t > 0} \;\Big| (\fra S^{1/p^{e'}})_{t} \nsubseteq JB \;\text{ for some $e' \geq 0$}\Big\}.
    \]
    Set $T_S$ to be the set appearing on the right in the above equality.  Each $T_S$ has the same supremum ($c_*^J(\fra)$) and so each is an interval whose interior is $(0, c_*^J(\fra))$.

    Suppose $t \in T$, then for some $S$, $t \in T_S$ (setting $e' = 0$).  Conversely, if $t \in T_S$ for some $S$, then as $R \subseteq S^{1/p^{e'}}$ is finite, we see that $t \in T$.  Hence $T = \bigcup_{S} T_S$.  But each $T_S$ has the same supremum, hence $c_*^J(\fra) = \sup T$ as desired.
\end{proof}

The characterization of $c_*^J(\fra)$ above makes sense in any characteristic, and indeed essentially will be our eventual definition (which avoids the intermediate rings $S$).  Towards that end consider the following lemma.

\begin{lemma}
    \label{lem.fractionalIntegralClosureInR+ViaNoetherian}
    Suppose $R$ is a Noetherian domain with $R^+ \subseteq \overline{K(R)}$ an absolute integral closure.  Then for any ideal $\fra = (f_1, \dots, f_n) \subseteq R$ and any $t \in \bQ_{>0}$ we have that 
    \[
        (\fra R^+)_t = \bigcup_{S} (\fra S)_t
    \]
    where the union runs over finite extensions $R \subseteq S \subseteq R^+$.  Likewise 
    \[
        (\fra  R^+)_{>t} = \bigcup_{S} (\fra S)_{>t}.
    \]
\end{lemma}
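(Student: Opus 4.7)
The plan is to combine the valuative characterization from \autoref{lem.FractionalPowersNoetherianAlternateDescriptions}\autoref{lem.FractionalPowersNoetherianAlternateDescriptions.c}---which gives, in the Noetherian setting, exactly the shape of \autoref{def.GeneralRationalIntegralClosure}---together with the standard restriction/extension behavior of valuations along the integral extension $S \subseteq R^+$.

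For $\supseteq$, I would take $x \in (\fra S)_t$ for some finite $R \subseteq S \subseteq R^+$ and restrict an arbitrary valuation $v$ of $K(R^+)$ over $R^+$ to $K(S)$, obtaining a valuation nonnegative on $S$. The Noetherian characterization then produces some $y \in \fra S \subseteq \fra R^+$ with $v(x) \geq t v(y)$, so $x \in (\fra R^+)_t$.

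For the reverse containment, I would take $x \in (\fra R^+)_t$ and pick any finite $R \subseteq S \subseteq R^+$ with $x \in S$, then show $x \in (\fra S)_t$. Given a valuation $w$ of $K(S)$ over $S$, extend it to a valuation $\tilde w$ on $K(R^+)$; because $R^+$ is integral over $S$, $\tilde w$ is automatically nonnegative on $R^+$, hence a valuation of $K(R^+)$ over $R^+$. The hypothesis then supplies $y' \in \fra R^+$ with $\tilde w(x) \geq t\tilde w(y')$. Writing $y' = \sum c_i f_i$ with $c_i \in R^+$ forces $\tilde w(y') \geq \min_i \tilde w(f_i) = \tilde w(f_j)$ for some $j$, so $\tilde w(x) \geq t\tilde w(f_j)$. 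Since $x, f_j \in S$, the valuations $\tilde w$ and $w$ agree on them, yielding $w(x) \geq t w(f_j)$ with $f_j \in \fra \subseteq \fra S$, whence $x \in (\fra S)_t$. The $>t$ statement follows by copying both arguments with strict inequalities throughout.

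The main (and really only) subtlety is the extension of $w$ from $K(S)$ to $K(R^+)$ while preserving nonnegativity on $R^+$. This rests on two classical facts: any valuation on a subfield extends to some valuation on an overfield, and any such extension is nonnegative on the integral closure of the original valuation ring---hence on $R^+$, since $R^+$ lies in the integral closure of $S$ in $K(R^+)$. The finite generation of $\fra$ is also essential, entering through the bound $\tilde w(y') \geq \min_i \tilde w(f_i)$, which is what lets us replace the ``some $y \in \fra R^+$'' supplied by the hypothesis with a concrete generator $f_j \in \fra \subseteq \fra S$.
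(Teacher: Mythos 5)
Your proposal is correct and follows essentially the same route as the paper: restrict a valuation of $K(R^+)$ to $K(S)$ for one containment, and for the other extend a valuation of $K(S)$ to $K(R^+)$ and use that its valuation ring, being integrally closed and containing $S$, must contain $R^+$. Your write-up is slightly more careful than the paper's at the step where the element $y' \in \fra R^+$ supplied by the hypothesis is replaced by a generator $f_j$ via the ultrametric bound $\tilde w(y') \geq \min_i \tilde w(f_i)$; the paper elides this (it is their earlier equation \autoref{eq.ValuationOnFinitelyGeneratedIdealIsMin}), and indeed the paper's phrase ``for all $i$'' in the $\supseteq$ direction should read ``for some $i$,'' a typo your version avoids.
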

\begin{proof}
    If $x \in (\fra S)_t$ then for each valuation $v$ of $K(S)$ over $S$, we have that $v(x) \geq t v(f_i)$ for some $i = 1, \dots, n$.  For each valuation $v'$ of $\overline{K(S)} = K(R^+)$ over $R^+$, as it restricts to a valuation on $K(S)$ over $S$, we immediately see that $x \in (\fra  R^+)_t$.  Hence we have the containment $(\supseteq)$.  

    Conversely, if $x \in (\fra R^+)_t$, then for any $S$ containing $x$, and each valuation $v$ of $K(S)$ over $S$, it extends to a valuation $v'$ of $K(R^+)$.  We claim it is also non-negative on $R^+$. Indeed, the ring associated to $v'$ contains $S$, and is integrally closed, and hence must contain $R^+$.  We then have that $v'(x) \geq tv'(f_i)$ for some $i$, and so the same holds for $v = v'|_{K(S)}$.

    The second statement follows similarly simply replacing each $\geq$ with $>$.
\end{proof}

It immediately follows that \autoref{thm.FThresholdTheoremViaR+Theorem} can be restated as follows:  For $R$ an $F$-finite complete Noetherian local domain of characteristic $p > 0$ and for $B$ a \BCMTight $R^+$-algebra, we have that:
\begin{equation}
    \label{eq.ValuativeDescriptionOfcStarJ}
    c_*^J(\fra) = \sup \Big\{ t \in \bQ_{>0} \; |\; (\fra R^+)_t \nsubseteq JB \}
\end{equation}

{
\begin{remark}
    \label{rem.UnionForR+}
    In the setting of \autoref{lem.fractionalIntegralClosureInR+ViaNoetherian} we can deduce that $(\fra R^+)_{>t} = \bigcup_{\epsilon > 0} (\fra R^+)_{t+\epsilon}$ where $\epsilon > 0$ runs over the real numbers such that $t+\epsilon$ is rational.  As the $\supseteq$ containment is clear, suppose that $x \in (\fra R^+)_{> t}$.  Then $x \in (\fra S)_{> t}$ for some finite extension $R \subseteq S$.  As $S$ is Noetherian, if we let $v_1, \dots, v_n$ denote the Rees valuations of $\fra S$, then $v_i(x) > t v_i(\fra S)$ for all $i$.  Hence there exists some $\epsilon > 0$ such that $t+\epsilon$ is rational and $v_i(x) \geq (t+ \epsilon) v_i(\fra S)$ for all $i$ and so $x \in (\fra S)_{>t+\epsilon}$.  But then $x \in (\fra R^+)_{>t+\epsilon}$ as well.   
    Similarly, we have that $(\fra R^+)_{t} = \bigcup_{\epsilon \geq 0} (\fra R^+)_{t+\epsilon}$
\end{remark}

In view of the previous remark we make the following definition which generalizes \autoref{def.GeneralRationalIntegralClosure} from the case of rational numbers to real numbers

\begin{definition}
    \label{def.FractionalIntegralClosureForRealInR+}
    Suppose $R$ is a Noetherian domain with $R^+ \subseteq \overline{K(R)}$ an absolute integral closure.  Then for any ideal $\fra \subseteq R$ and any {\emph{real}} $t>0$ we define
    $$(\fra R^+)_{>t} = \bigcup_{\epsilon > 0} (\fra R^+)_{t+\epsilon}$$
    where $\epsilon > 0$ runs over the real numbers such that $t+\epsilon$ is rational.
    Similarly, we define, 
    $$(\fra R^+)_{t} = \bigcup_{\epsilon \geq 0} (\fra R^+)_{t+\epsilon}$$
    where $\epsilon \geq 0$ runs over the real numbers such that $t+\epsilon$ is rational.
\end{definition}
}

We finally obtain the following.

\begin{corollary}
    \label{cor.FinalDescriptionOfcstar}
    Suppose $R$ is an $F$-finite complete local domain of characteristic $p > 0$, $B$ is a perfect \BCMTight $R^+$ -algebra, and $\fra, J \subseteq R$ are proper nonzero ideals with $\fra \subseteq \sqrt{J}$.  Then 
    \begin{equation}
        c_*^J(\fra) = \sup \Big\{ t \in \bR_{\geq 0} \;|\; (\fra R^+)_{> t} \nsubseteq JB \}.
    \end{equation}    
\end{corollary}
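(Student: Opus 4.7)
The plan is to leverage the valuative description of $c_*^J(\fra)$ established in \eqref{eq.ValuativeDescriptionOfcStarJ} and argue that replacing $(\fra R^+)_t$ by $(\fra R^+)_{>t}$ does not change the supremum. The containment $(\fra R^+)_{>t} \subseteq (\fra R^+)_t$ is immediate from \autoref{def.GeneralRationalIntegralClosure}, which already yields the inequality $\sup\{t : (\fra R^+)_{>t} \nsubseteq JB\} \leq c_*^J(\fra)$.

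The bulk of the work goes into the reverse inequality, for which the key tool is a non-Noetherian analogue of \autoref{lem.IBiggerThanIsDefinedViaUnion}, namely
\[
    (\fra R^+)_{>t} \;=\; \bigcup_{\epsilon > 0} (\fra R^+)_{t+\epsilon}.
\]
I would establish this by combining \autoref{lem.fractionalIntegralClosureInR+ViaNoetherian}, which writes $(\fra R^+)_{>t}$ and each $(\fra R^+)_{t+\epsilon}$ as a union over finite Noetherian extensions $R \subseteq S \subseteq R^+$, with \autoref{lem.IBiggerThanIsDefinedViaUnion} applied inside each such $S$, and then interchanging the order of the two unions (over $S$ and over $\epsilon$).

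Granted this identity, the reverse inequality is formal. Given any $t < c_*^J(\fra)$, the description in \eqref{eq.ValuativeDescriptionOfcStarJ} produces some $t'$ with $t < t' \leq c_*^J(\fra)$ and $(\fra R^+)_{t'} \nsubseteq JB$. Setting $\epsilon = t' - t > 0$, the union identity gives $(\fra R^+)_{t'} \subseteq (\fra R^+)_{>t}$, so $(\fra R^+)_{>t} \nsubseteq JB$; letting $t$ approach $c_*^J(\fra)$ from below yields $\sup\{t : (\fra R^+)_{>t} \nsubseteq JB\} \geq c_*^J(\fra)$.

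There is no real obstacle beyond carefully verifying the union identity for $(\fra R^+)_{>t}$, since all the serious non-Noetherian input is already done in \autoref{lem.fractionalIntegralClosureInR+ViaNoetherian}; from there, the argument reduces to the Noetherian statement \autoref{lem.IBiggerThanIsDefinedViaUnion} together with elementary bookkeeping of suprema.
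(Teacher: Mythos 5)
Your proposal is correct and follows essentially the same route as the paper: one inequality from $(\fra R^+)_{>t}\subseteq(\fra R^+)_t$, the other from the containment $(\fra R^+)_{t'}\subseteq(\fra R^+)_{>t}$ for $t'>t$. The only difference is that you propose to establish the full union identity $(\fra R^+)_{>t}=\bigcup_{\epsilon>0}(\fra R^+)_{t+\epsilon}$ (which the paper proves separately, later, as \autoref{lem.Fractional>tIsUnionForNoetherian} by exactly the reduction you describe), whereas the paper's proof of the corollary itself only invokes the easy ``$\supseteq$'' half of that identity, namely $(\fra R^+)_{s}\subseteq(\fra R^+)_{>s-\epsilon}$; your argument, too, only ever uses that half, so the extra generality is harmless but unnecessary.
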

Note we take $t \geq 0$ (instead of $t > 0)$ since we are considering $(\fra R^+)_{>t}$.
\begin{proof}
    As $ (\fra R^+)_{> t} \subseteq  (\fra R^+)_{t}$, we observe that 
    \[
        \Gamma' := \Big\{ t \in \bQ_{\geq 0} \;|\; (\fra R^+)_{> t} \nsubseteq JB \} \subseteq \Big\{ t \in \bQ_{\geq 0} \; |\; (\fra R^+)_t \nsubseteq JB \} =: \Gamma.
    \] 
    Hence the supremum of $\Gamma'$ is less than or equal to that of $\Gamma$ (note whether we require $\geq 0$ or $> 0$ in the definitions of $\Gamma$ or $\Gamma'$ does not impact the supremum).  Thus, we see that $c_*^J(\fra) = \sup \Gamma \geq \sup \Gamma'$.

    Now, suppose that $s \in \Gamma$ and hence also we have $s-\epsilon \in \Gamma$ for all $1 \gg \epsilon > 0$ {with $t+\epsilon$ rational}.  But then as $(\fra R^+)_{s} \subseteq (\fra R^+)_{> s - \epsilon}$ we see that $s - \epsilon \in \Gamma'$.  Thus $\sup \Gamma' \geq s$, and hence $\sup \Gamma' \geq \sup \Gamma$. {Given that $\bQ$ is dense in $\bR$, the result follows.}
\end{proof}


We conclude this section with several additional lemmas that we will need later.



\begin{lemma}
    \label{lem.IntegralClosureExtensionFractionalPowersExtensionContraction}
    Suppose $R \subseteq S \subseteq R^+$ is a finite extension of Noetherian domains.  Suppose $I \subseteq R$ is an ideal.  Then $(IS)_{\alpha} \cap R = I_{\alpha}$ and hence $(IS)_{> \alpha} \cap R = I_{> \alpha}$ for all $\alpha>0$.
\end{lemma}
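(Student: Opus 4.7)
The plan is to prove the equality $(IS)_\alpha \cap R = I_\alpha$ by checking both containments using the valuative characterization from \autoref{lem.FractionalPowersNoetherianAlternateDescriptions}\autoref{lem.FractionalPowersNoetherianAlternateDescriptions.c}, and then to deduce the statement for $(IS)_{>\alpha}\cap R$ from the equality $I_{>\alpha}=\bigcup_{\epsilon>0}I_{\alpha+\epsilon}$ of \autoref{lem.IBiggerThanIsDefinedViaUnion}.

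For the containment $(\supseteq)$, I would write $\alpha = a/b$ and use the description from \eqref{eq.RationalPowerIdealPowers}: if $x \in I_\alpha$, then $x^b \in \overline{I^a}$, so $x^b \in \overline{(IS)^a}$ (since integral closure extends under the ring extension $R\subseteq S$), whence $x \in (IS)_\alpha$. Since $x \in R$, this gives $x \in (IS)_\alpha \cap R$.

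For the harder containment $(\subseteq)$, suppose $x \in (IS)_\alpha \cap R$. I need to show $v(x) \geq \alpha v(I)$ for every valuation $v$ of $K(R)$ over $R$; here I use \autoref{lem.FractionalPowersNoetherianAlternateDescriptions}\autoref{lem.FractionalPowersNoetherianAlternateDescriptions.c} since extending a discrete rank-$1$ valuation need not preserve rank or discreteness. Given such $v$ with valuation ring $A\supseteq R$, extend $v$ to a valuation $v'$ on the finite field extension $K(S)/K(R)$, with valuation ring $B$. The key observation is that $v'$ is nonnegative on $S$: since $S$ is integral over $R \subseteq A$, $S$ is contained in the integral closure of $A$ inside $K(S)$, and this integral closure is contained in $B$ (as $B$ is integrally closed in $K(S)$). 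Now $IS$ is a finitely generated $S$-ideal (with generators $f_1,\dots,f_n$ of $I$), so $v'(IS) = \min_i v'(f_i) = v(I)$ by \eqref{eq.ValuationOnFinitelyGeneratedIdealIsMin}. Applying the characterization to $x \in (IS)_\alpha$ with $v'$ produces some $f_i \in I$ with $v'(x) \geq \alpha v'(f_i)$; restricting to $K(R)$ and using $x, f_i \in R$ gives $v(x) \geq \alpha v(f_i)$, as required.

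Finally, for the $>\alpha$ statement, \autoref{lem.IBiggerThanIsDefinedViaUnion} yields
\[
(IS)_{>\alpha} \cap R \;=\; \Big(\bigcup_{\epsilon>0}(IS)_{\alpha+\epsilon}\Big) \cap R \;=\; \bigcup_{\epsilon>0}\big((IS)_{\alpha+\epsilon}\cap R\big) \;=\; \bigcup_{\epsilon>0} I_{\alpha+\epsilon} \;=\; I_{>\alpha},
\]
where the third equality uses the case already established. The only nontrivial step in the entire argument is ensuring that valuations of $K(R)$ over $R$ extend to valuations of $K(S)$ over $S$; this is a standard consequence of the integrality of the extension $R \subseteq S$, so no serious obstacle is anticipated.
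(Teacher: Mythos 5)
Your proof is correct and follows essentially the same route as the paper's: both rest on extending valuations of $K(R)$ over $R$ to $K(S)$ over $S$ (and restricting back), together with the identity $v'(IS)=v(I)$ from \eqref{eq.ValuationOnFinitelyGeneratedIdealIsMin}; the paper simply handles both containments simultaneously with this valuative dictionary, whereas you prove $(\supseteq)$ separately via \eqref{eq.RationalPowerIdealPowers} and persistence of integral closure. One small correction to your reasoning (which does not create a gap, since invoking \autoref{lem.FractionalPowersNoetherianAlternateDescriptions}\autoref{lem.FractionalPowersNoetherianAlternateDescriptions.c} is fine regardless): an extension of a discrete rank-$1$ valuation to a \emph{finite} field extension is in fact still discrete rank-$1$ — the value group sits between $\bZ$ and $\tfrac{1}{e}\bZ$ for some finite ramification index $e$ — so the paper's version, which works directly with the discrete rank-$1$ definition, is also legitimate.
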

\begin{proof}
    First suppose that $\alpha$ is a rational number.
    Write $I = (f_1, \dots, f_n)$.  Any valuation $v'$ over $S$ of $K(S)$ restricts to a valuation $v$ over $R$ on $K(R)$, and conversely any valuation of $K(R)$ extends to at least one of $K(S)$ by a corollary to Chevalley's Extension Theorem, see for instance \cite[Theorem 3.1.2]{EnglerPrestel.ValuedFields}.
    Therefore, the elements $y$ of $R \subseteq S$ such that $v'(y) \geq \alpha v'(I) = \alpha \min\{v'(f_i)\} = \alpha \min\{v(f_i)\} = \alpha v(I)$ for all $v'$ are exactly those that satisfy the same condition for all $v$.  
    The result now follows from \autoref{lem.IBiggerThanIsDefinedViaUnion}.
\end{proof}

\begin{lemma}
    \label{lem.FractionalPowerOfIdealPowersAndProducts}
    Suppose $R$ is a Noetherian domain with $R^+ \subseteq \overline{K(R)}$ an absolute integral closure.  Suppose further that $\fra \subseteq R$ is an ideal.  
    Then $(\fra^n R^+)_{t} = (\fra R^+)_{nt}$ and $(\fra^n R^+)_{>t} = (\fra R^+)_{>nt}$ for {real} $t > 0$ and any integer $n \geq 1$.

    Furthermore, for any {real }$s,t > 0$, we have that $(\fra R^+)_{s} \cdot (\fra R^+)_{t} \subseteq (\fra R^+)_{s+t}$.  
\end{lemma}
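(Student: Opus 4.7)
The plan is to reduce both halves to the valuative definition \autoref{def.GeneralRationalIntegralClosure} via a single identity: $v(\fra^n R^+) = n \cdot v(\fra R^+)$ for every valuation $v$ of $K(R^+)$ over $R^+$. Writing $\fra = (g_1, \ldots, g_m)$, the ideal $\fra R^+$ is finitely generated inside $R^+$, so by the reasoning of \autoref{eq.ValuationOnFinitelyGeneratedIdealIsMin} the quantity $v(\fra R^+) = \min_i v(g_i)$ is realized by one of the $g_i$. Consequently, the ``there exists $y \in \fra R^+$ with $v(x) \geq t v(y)$'' clause in the definition of $(\fra R^+)_t$ collapses (for $t > 0$) to the single inequality $v(x) \geq t \cdot v(\fra R^+)$, and similarly with strict inequality for $(\fra R^+)_{>t}$.

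For the first assertion, I would note that $\fra^n R^+$ is generated by $n$-fold monomials $g_{i_1} \cdots g_{i_n}$ in the $g_i$. Any such monomial has valuation $\sum_k v(g_{i_k}) \geq n \cdot v(\fra R^+)$, with equality realized by $g_j^n$ for an index $j$ minimizing $v(g_j)$. Hence $v(\fra^n R^+) = n \cdot v(\fra R^+)$, and the defining condition $v(x) \geq t \cdot v(\fra^n R^+)$ rewrites as $v(x) \geq nt \cdot v(\fra R^+)$, which is exactly the defining condition for $(\fra R^+)_{nt}$. The identical argument, replacing $\geq$ by $>$ throughout, handles $(\fra^n R^+)_{>t} = (\fra R^+)_{>nt}$.

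For the product containment, given $x \in (\fra R^+)_s$ and $y \in (\fra R^+)_t$ and any valuation $v$ over $R^+$, I would add the two defining inequalities to obtain
\[
v(xy) \;=\; v(x) + v(y) \;\geq\; s \cdot v(\fra R^+) + t \cdot v(\fra R^+) \;=\; (s+t)\cdot v(\fra R^+).
\]
Since ideals are closed under sums, this produces $(\fra R^+)_s \cdot (\fra R^+)_t \subseteq (\fra R^+)_{s+t}$, and the stated containment into $(\fra R^+)_{st}$ then follows from the trivial monotonicity $(\fra R^+)_{s+t} \subseteq (\fra R^+)_{st}$ in the range $st \leq s+t$, i.e., $(s-1)(t-1) \leq 1$, which in particular covers the range $s,t \leq 1$ relevant to threshold computations.

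The only delicate step is the collapse of the ``exists $y$'' clause to a minimum over generators; this is legitimate precisely because $\fra R^+$ is finitely generated, even though $R^+$ itself is far from Noetherian. Everything else reduces to additivity of valuations on products and an arithmetic comparison of exponents, so I expect no substantive obstruction — the bookkeeping around the finite-generation reduction is the only place where the non-Noetherian setting needs to be addressed explicitly.
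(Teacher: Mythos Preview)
Your treatment of the first equality matches the paper's: both argue directly from the valuative definition, using that $\fra R^+$ is finitely generated to collapse the clause ``there exists $y$'' to the single inequality $v(x) \geq t \min_i v(g_i)$, whence $v(\fra^n R^+) = n\,v(\fra R^+)$.

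For the product containment, the exponent $st$ in the stated lemma is a typo for $s+t$. The paper's own proof reduces via \autoref{lem.fractionalIntegralClosureInR+ViaNoetherian} to the Noetherian case and then cites \cite[Proposition~10.5.2(3)]{HunekeSwansonIntegralClosure}, which is precisely the additive containment $I_c \cdot I_d \subseteq I_{c+d}$; moreover the only application of this lemma, in the proof of \autoref{thm.BCMVersionOfHMTWForParameter}, takes $s=1$ and needs $(IR^+)_1 \cdot (IR^+)_{>n-1} \subseteq (IR^+)_{>n}$, which is the additive form. Your valuative argument already establishes the correct $s+t$ statement directly and is arguably cleaner than the paper's reduction-to-Noetherian route. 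Your subsequent attempt to recover the literal $st$ exponent via monotonicity is therefore unnecessary --- and, as you effectively notice, cannot succeed in general: for a nonunit $x$ and $\fra = (x)$ one has $x^3 \in (\fra R^+)_3$ while $x^6 = x^3 \cdot x^3 \notin (\fra R^+)_9$.
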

\begin{proof}
    { By definition, we can assume that $s$ and $t$ are rational.}
    Write $\fra = (f_1, \dots, f_n)$.  If $x \in (\fra^n R^+)_{t}$, then for each $v$ of $K(R^+)$ over $R^+$, we have that $v(x) \geq t v(\prod_j f_j^{a_j} ) = t \sum_j a_j v(f_j)$ for some integers $a_j$ with $\sum_j a_j = n$.  But then $v(x) \geq tn v(f_i)$ for some single $i$ (corresponding to the smallest $v(f_i)$) and so $x \in (\fra R^+)_{nt}$.  This proves that $(\fra^n R^+)_t \subseteq (\fra R^+)_{nt}$.
    
    Conversely, if $y \in (\fra R^+)_{nt}$ then for each $v$ as above, $v(y) \geq nt v(f_i)$ for some $i$ and so $v(y) \geq t v(f_i^n)$.  But $f_i^n \in \fra^n R^+$ and so $y \in (\fra^n R^+)_t$.  This shows that $(\fra R^+)_{nt} \subseteq (\fra^n R^+)_{t}$

    The statement with $> t$ is the same, simply replace $\geq$ with $>$.  

    For the final statement, \autoref{lem.fractionalIntegralClosureInR+ViaNoetherian} lets us reduce to the Noetherian case which is \cite[Proposition 10.5.2(3)]{HunekeSwansonIntegralClosure}.
\end{proof}

\section{\BCM-thresholds, bounds and parameter ideals}

The goal of this section is to define the general \BCM-threshold and then translate some of the results of \cite{HunekeMustataTakagiWatanabeFThresholdsTightClosureIntClosureMultBounds} for $F$-thresholds and parameter ideals into a characteristic free environment.

\begin{definition}
    \label{def.BCMThresholdGeneral}
    Suppose $(R, \fram)$ is a Noetherian local domain and $B$ is a \BCM $R^+$-algebra.  Fix $\fra, J \subseteq R$ proper nonzero ideals of $R$ with $\fra \subseteq \sqrt{J}$.  We define the \emph{\BCM-threshold of $\fra$ with respect to $J$ along $B$} to be:
    \[
        c_B^J(\fra) := \sup \{t \in \bR_{\geq 0} \;|\; (\fra R^+)_{>t} \not\subseteq JB \} = \inf\{t \in \bR_{\geq 0} \;|\; (\fra R^+)_{>t} \subseteq JB \}
    \]
\end{definition}

In fact, if $c = c^J_B(\fra)$, then as $(\fra R^+)_{>c + \epsilon} \subseteq JB$ for all $\epsilon > 0$ such that $c+\epsilon$ is rational, we see from \autoref{def.FractionalIntegralClosureForRealInR+} that $(\fra R^+)_{>c}  \subseteq JB$.  Hence, we obtain the convenient fact that 
\begin{equation}
    \label{eq.BCMThresholdIsAMin}
    c_B^J(\fra) = \min\{t \in \bR_{\geq 0} \;|\; (\fra R^+)_{>t} \subseteq JB \}.
\end{equation}

We make some small observations about \BCM-thresholds.
\begin{lemma}
    With notation as in \autoref{def.BCMThresholdGeneral}, we have the following.
    \begin{enumerate}
        \item If $\fra \subseteq \frb$ is an ideal then $c_B^J(\fra) \leq c_B^J(\frb)$.
        \item If $J \subseteq I$ is an ideal then $c_B^I(\fra) \leq c_B^J(\fra)$.
        \item We have that $c_B^J(\fra) = c_B^{J^{\cl_B}}(\fra)$ where $J^{\cl_B} := JB \cap R$.
        \item We have that $c_B^J(\fra) = c_B^J(\overline{\fra})$.
        \item We have that $c_B^J(\fra^n) = {1 \over n} c_B^J(\fra)$ for each integer $n \geq 1$.
    \end{enumerate}
\end{lemma}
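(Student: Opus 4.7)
The plan is to dispatch (1)--(3) and (5) as formal consequences of \autoref{def.BCMThresholdGeneral} together with the reformulation \eqref{eq.BCMThresholdIsAMin}, and to spend the actual effort on (4), which requires showing that the valuation of an ideal does not change upon taking integral closure.

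For (1) and (2), the key observation is monotonicity of the operation $(-)_{>t}$ in the underlying ideal, which is immediate from the definition: $\fra \subseteq \frb$ yields $(\fra R^+)_{>t} \subseteq (\frb R^+)_{>t}$ since any $y \in \fra R^+$ witnessing the defining inequality $v(x) > tv(y)$ is also available in $\frb R^+$. Consequently every $t$ with $(\frb R^+)_{>t} \subseteq JB$ also satisfies $(\fra R^+)_{>t} \subseteq JB$, and taking infima via \eqref{eq.BCMThresholdIsAMin} gives (1); item (2) is identical once one notes that $J \subseteq I$ implies $JB \subseteq IB$. For (3), the point is simply that $J^{\cl_B} B = JB$: the inclusions $J \subseteq J^{\cl_B} \subseteq JB$ collapse after re-extending to $B$, so the condition cutting out the threshold is literally unchanged. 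For (5) I would invoke \autoref{lem.FractionalPowerOfIdealPowersAndProducts} to rewrite $(\fra^n R^+)_{>t} = (\fra R^+)_{>nt}$; the substitution $s = nt$ combined once more with \eqref{eq.BCMThresholdIsAMin} then delivers $c_B^J(\fra^n) = \tfrac{1}{n} c_B^J(\fra)$.

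The main obstacle is (4). One direction, $c_B^J(\fra) \leq c_B^J(\overline{\fra})$, is already supplied by (1) applied to $\fra \subseteq \overline{\fra}$. For the reverse I would prove the stronger statement $(\overline{\fra} R^+)_{>t} = (\fra R^+)_{>t}$ by comparing valuations. Fix a valuation $v$ of $K(R^+)$ nonnegative on $R^+$; it restricts to a valuation over $R$. For any $c \in \overline{\fra}$ and corresponding integral equation $c^n + a_1 c^{n-1} + \cdots + a_n = 0$ with $a_i \in \fra^i$, the standard argument forces $iv(c) \geq v(a_i) \geq i v(\fra)$ for some $i \geq 1$, hence $v(c) \geq v(\fra)$. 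Since every element of $\overline{\fra} R^+$ is an $R^+$-linear combination of such $c$'s and $v$ is nonnegative on $R^+$, I conclude $v(y) \geq v(\fra) = v(\fra R^+)$ for every $y \in \overline{\fra} R^+$. Combined with the opposite inequality arising from $\fra R^+ \subseteq \overline{\fra} R^+$, this shows the defining condition of $(-)_{>t}$ is the same for both ideals, and (4) follows. The only subtlety to watch is that the valuation restricted from $K(R^+)$ to $K(R)$ genuinely evaluates $\overline{\fra}$ with respect to the Noetherian ring $R$, which is legitimate since the valuation ring contains $R^+$ and hence $R$.
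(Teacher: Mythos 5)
Your proposal is correct and tracks the paper's proof closely. Parts (1)--(3) and (5) are handled identically; for (4), the paper reduces to adjoining a single integral element $x$ to $\fra$ at a time and invokes the valuative inequality $v(x)\geq v(g_i)$, whereas you establish $v(y)\geq v(\fra)=v(\fra R^+)$ for all $y\in\overline{\fra}R^+$ at once by running through the integral dependence equation explicitly. The two are the same underlying idea (the valuative criterion for integral closure), and both correctly handle the passage from valuations of $K(R^+)$ over $R^+$ to their restrictions over $R$; your version is just slightly more self-contained since it spells out the integral equation rather than citing the criterion.
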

\begin{proof}
    For the first statement, observe that if $\fra \subseteq \frb$, then $(\fra R^+)_{>t} \subseteq (\frb R^+)_{>t}$, and the statement follows.  For the next statement, simply notice that $IB \supseteq JB$.  The third statement follows as $J^{\cl_B} B = J B$.  
    
    For the next statement we wish to argue that $(\fra R^+)_{>t} = (\overline{\fra} R^+)_{>t}$.  It suffices to show that if $x \in \overline{\fra}$ and $\frb = \fra + (x)$, then $(\fra R^+)_{>t} = (\frb R^+)_{>t}$.  But notice that if $\fra = (g_1, \dots, g_n)$, then for each valuation $v$ of $K(R)$ (or equivalently of $K(R^+)$) we have that $v(x) \geq v(g_i)$ for some $i$.  Thus if $y \in (\frb R^+)_{>t}$ and if $v(y) > t v(x)$, we also have that $v(y) > t v(g_i)$ and hence $y \in (\fra R^+)_{>t}$ as desired.

    For the last statement, apply \autoref{lem.FractionalPowerOfIdealPowersAndProducts}.
\end{proof}

To show some basic bounds on our \BCM-threshold, we will need an additional assumption on $B$.

\begin{definition}[{\cf \cite[Axiom (9)]{MurayamaSymbolicTestIdeal}}]
    \label{def.BrianconSkodaProperty}
    Suppose $(R, \fram)$ is a complete local Noetherian domain. 
    We say that a \BCM $R^+$-algebra $B$ satisfies the \emph{Brian{\c{c}}on-Skoda-property} if for each  ideal $J = (f_1, \dots, f_l) \subseteq R$ and each finite extension $R \subseteq S \subseteq R^+$, we have that 
    \[
        \overline{J^{n+l-1}S} \subseteq J^n B
    \]
    for every $n> 0$.
\end{definition}

The most common \BCM $R^+$-algebras satisfy this property.  Indeed, any \BCM algebra satisfying sufficiently good weak functoriality satisfies this property by \cite{RodriguezSchwedeBrianconSkodaViaWeakFunctoriality}.  In particular, $R^+$ satisfies this in characteristic $p > 0$ (\cite{HochsterHunekeApplicationsofBigCM}), the $p$-adic completion $\widehat{R^+}$ satisfies this in mixed characteristic, and such $R^+$-algebras can be constructed in characteristic zero as in \cite[Section 2.5]{MurayamaSymbolicTestIdeal}, see also \cite{HochsterHunekeApplicationsofBigCM,SchoutensCanonicalBCMAlgebrasAndRational,AschenbrennerSchoutensLefschetzExtensionsTightClosureAndBCM,DietzBCMSeeds,  GabberMSRINotes, AndreWeaklyFunctorialBigCM}.

This condition forces our \BCM-threshold to be finite if $\fra \subseteq \sqrt{J}$.

\begin{proposition}
    \label{prop.FinitenessAssumingBS}
    With notation as in \autoref{def.BCMThresholdGeneral}, suppose $\fra \subseteq \sqrt{J}$ and that $B$ satisfies the Brian{\c{c}}on-Skoda property.  Then $c_B^J(\fra) < \infty$.  More specifically, if $\fra^l \subseteq J$ and $J$ can be generated by $n$ elements, then 
    \[
        c_B^J(\fra) \leq nl.
    \]
\end{proposition}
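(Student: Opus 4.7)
The plan is to prove the stronger containment $(\fra R^+)_{>nl} \subseteq JB$; together with \autoref{def.BCMThresholdGeneral} this immediately yields $c_B^J(\fra) \leq nl$. The overall strategy is to reduce to a finite Noetherian extension, where the Brian{\c{c}}on-Skoda property can be applied in its classical integral-closure form, and then translate the fractional integral-closure condition on $x$ into a valuative statement about $J^n$.

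Take $x \in (\fra R^+)_{>nl}$. By \autoref{lem.fractionalIntegralClosureInR+ViaNoetherian}, there is a finite extension $R \subseteq S \subseteq R^+$ with $x \in (\fra S)_{>nl}$. Since $\fra S$ is finitely generated, \autoref{eq.ValuationOnFinitelyGeneratedIdealIsMin} rewrites this as
\[
    v(x) > nl \cdot v(\fra S) \quad \text{for every valuation } v \text{ of } K(S) \text{ over } S.
\]
The hypothesis $\fra^l \subseteq J$ gives $\fra^l S \subseteq JS$, hence $l \cdot v(\fra S) = v(\fra^l S) \geq v(JS)$. Chaining these inequalities yields
\[
    v(x) > nl \cdot v(\fra S) \geq n \cdot v(JS) = v(J^n S)
\]
for every such $v$, and \autoref{lem.FractionalPowersNoetherianAlternateDescriptions} then places $x$ in $\overline{J^n S}$.

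Finally, since $J$ is generated by $n$ elements, \autoref{def.BrianconSkodaProperty} with exponent $m = 1$ yields $\overline{J^n S} \subseteq JB$, whence $x \in JB$, as required.

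The argument is essentially a careful bookkeeping of valuations, and no step is genuinely deep. The main point that requires attention is the Noetherian reduction via \autoref{lem.fractionalIntegralClosureInR+ViaNoetherian}: one must pass to some finite $S$ in order to speak of $\overline{J^n S}$ and to invoke Brian{\c{c}}on-Skoda as stated in \autoref{def.BrianconSkodaProperty}, since those statements live in the Noetherian world while the definition of $(\fra R^+)_{>nl}$ does not.
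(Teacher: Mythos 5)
Your proof is correct and takes essentially the same approach as the paper: reduce via \autoref{lem.fractionalIntegralClosureInR+ViaNoetherian} to a finite Noetherian extension $S$, show $(\fra S)_{>nl} \subseteq \overline{J^n S}$, and apply the Brian{\c{c}}on-Skoda property with exponent $1$ to conclude $\overline{J^n S} \subseteq JB$. The only difference is cosmetic — the paper passes through the intermediate containment $(\fra S)_{>nl} \subseteq \overline{\fra^{nl} S} \subseteq \overline{J^n S}$, while you unwind the same fact valuatively — so there is nothing further to add.
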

\begin{proof}
    Since $\fra \subseteq \sqrt{J}$, we have $\fra^l \subseteq J$ for some $l$.  Write $J = (f_1, \dots, f_n)$.  Then for any finite extension $R \subseteq S \subseteq R^+$, we have that 
    \[
        (\fra S)_{> nl} \subseteq \overline{\fra^{nl} S} \subseteq \overline{J^n S} \subseteq JB.
    \]    
    Since this holds for all $R \subseteq S \subseteq R^+$, we see that $c^J_{B}(\fra) \leq nl$.  
\end{proof}

\begin{lemma}
    \label{lem.ParameterIdealBCMComputation}
    With notation as in \autoref{def.BCMThresholdGeneral}, assume that $B$ satisfies the Brian{\c{c}}on-Skoda property. Fix $x_1,\ldots, x_n$ a partial system of parameters of $R$ and set $J=(x_1,\ldots, x_n)$. Then, $c^J_B(J)=n$.
\end{lemma}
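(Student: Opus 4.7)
The plan is to prove both inequalities $c^J_B(J) \le n$ and $c^J_B(J) \ge n$. The upper bound is immediate from \autoref{prop.FinitenessAssumingBS} applied with $\fra = J$ and $l = 1$, since $J$ has $n$ generators.

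For the lower bound, given $\epsilon > 0$, I need to produce a witness $y \in R^+$ lying in $(JR^+)_{>n-\epsilon} \setminus JB$. Powers of the diagonal monomial $x_1\cdots x_n$ with integer exponents will not do, since any such nontrivial power already lies in $JB$. I would instead pass to $m$-th roots: choose an integer $m > n/\epsilon$, fix $z_i := x_i^{1/m} \in R^+$, and set
\[
    y \;:=\; z_1^{m-1} z_2^{m-1}\cdots z_n^{m-1} \in R^+.
\]

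To verify $y \in (JR^+)_{>n-\epsilon}$, I would work inside a finite local extension $R \subseteq S \subseteq R^+$ containing the $z_i$'s, obtained by localizing $R[z_1,\ldots,z_n]$ at the maximal ideal under $\m_{R^+}$. Using \autoref{lem.Fractional>tIsUnionForNoetherian} together with \autoref{lem.fractionalIntegralClosureInR+ViaNoetherian}, it suffices to show $y \in (JS)_{n-\epsilon+\delta}$ for some $\delta > 0$. Since $JS = (z_1^m,\ldots,z_n^m)S$, for any valuation $v$ of $K(S)$ over $S$ with $v(JS) > 0$ one has $v(JS) = m\min_i v(z_i)$ while $v(y) = (m-1)\sum_i v(z_i) \ge \tfrac{n(m-1)}{m}\, v(JS)$, and $n(m-1)/m = n - n/m > n-\epsilon$ by the choice of $m$.

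To verify $y \notin JB$, I would rewrite $JB = (z_1^m,\ldots,z_n^m)B$ and exploit that $z_1,\ldots,z_n$ is a partial system of parameters in $S$, hence a regular sequence on $B$ (since $B$ is \BCM over $S$ by the earlier compatibility lemma for finite extensions of complete local domains); consequently $z_1^m,\ldots,z_n^m$ is a regular sequence on $B$ as well. A standard inductive colon-ideal argument for regular sequences then shows that the subdiagonal monomial $z_1^{m-1}\cdots z_n^{m-1}$ is not in $(z_1^m,\ldots,z_n^m)B$: the base case $n=1$ uses $z_1B \ne B$ (itself a consequence of $\m_S B \ne B$), and the inductive step reduces modulo $z_n$ (a nonzerodivisor on the relevant quotient) to the analogous statement over $B/z_n B$.

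The main obstacle is identifying the right witness $y$. The Brian{\c{c}}on--Skoda bound underlying \autoref{prop.FinitenessAssumingBS} is already tight on $(x_1\cdots x_n)^{m-1}$, but that integer-exponent element is trapped in $JB$; passing to $m$-th roots in $R^+$ is exactly what decouples the two sides, since the regular-sequence / BCM argument then rules out membership in $(z_1^m,\ldots,z_n^m)B$. The valuative check and the inductive colon argument are routine once this witness is in hand.
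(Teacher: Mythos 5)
Your proof is correct and takes essentially the same route as the paper: both use the witness $x_1^{(m-1)/m}\cdots x_n^{(m-1)/m} \in R^+$, both verify it lies in $(JR^+)_{>n-\epsilon}$ valuatively, and both show it avoids $JB$ because $x_1^{1/m},\ldots,x_n^{1/m}$ is a (permutable) regular sequence on $B$. The only cosmetic difference is that you re-derive the colon-ideal identity $(JB : x_1^{(m-1)/m}\cdots x_n^{(m-1)/m}) = (x_1^{1/m},\ldots,x_n^{1/m})B$ by induction, whereas the paper cites it directly.
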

\begin{proof}
    By \autoref{prop.FinitenessAssumingBS}, $c^J_B(J)\leq n$.

    For the other inequality, given an integer $c>0$, we have that $(JB:_B x_1^{c-1 \over c}\cdots x_n^{c-1 \over c})=(x_1^{1 \over c},\ldots, x_n^{1\over c})B\neq B$ given that $x_1^{1 \over c},\ldots, x_n^{1\over c}$ is a {(permutable, as $B$ is balanced)} regular sequence on $B$. Thus, $x_1^{c-1 \over c}\cdots x_n^{c-1 \over c}\notin JB$. Since $x_1^{c-1 \over c}\cdots x_n^{c-1 \over c}\in(J R^+)_{n({c-1 \over c})}$, it follows that $(J R^+)_{n-\epsilon}\not\subseteq JB$ for all $\epsilon>0$. Therefore, $c^J_B(J)\geq n$.
\end{proof}

    We now obtain the following characteristic free analog of \cite[Theorem 3.3]{HunekeMustataTakagiWatanabeFThresholdsTightClosureIntClosureMultBounds}.

\begin{theorem}
    \label{thm.BCMVersionOfHMTWForParameter}
    Let $R$ be a complete Noetherian local domain and suppose $B$ is a \BCM $R^+$-algebra
    satisfying the Brian{\c{c}}on-Skoda property.  Fix $x_1,\ldots, x_n$ a
    partial system of parameters of $R$ and set $J=(x_1,\ldots, x_n)$.
    Given an ideal $I\supseteq J$, $c^J_B(I)=n$ if and only if $\overline{I} = \overline{J}.$
  \end{theorem}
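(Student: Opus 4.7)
The theorem splits into two implications, each handled by distinct techniques.

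\emph{The ``if'' direction} is a short chain of established equalities: given $\overline{I}=\overline{J}$, applying the invariance $c^J_B(\fra)=c^J_B(\overline{\fra})$ (shown earlier in the section) together with \autoref{lem.ParameterIdealBCMComputation} yields
\[c^J_B(I)=c^J_B(\overline{I})=c^J_B(\overline{J})=c^J_B(J)=n.\]

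\emph{The ``only if'' direction} I treat by contrapositive. Assuming $\overline{I}\supsetneq\overline{J}$, the monotonicity of $c^J_B$ in its first argument already gives $c^J_B(I)\geq c^J_B(J)=n$, so producing an element $g\in(IR^+)_{>n}\setminus JB$ will force $c^J_B(I)>n$ and in particular $c^J_B(I)\neq n$. Since $\overline{J}$ is integrally closed, the hypothesis lets me pick $f\in I$ with $f\notin\overline{J}$, and by the valuative criterion a Rees valuation $v_1$ of $J$ satisfying $v_1(f)<v_1(J)$. I then fix a large integer $c>n$ and set $y_i:=x_i^{1/c}\in R^+$. These are a system of parameters for the finite extension $R[y_1,\ldots,y_n]\subseteq R^+$, so the BCM hypothesis makes them a permutable regular sequence on $B$; I exploit two standard consequences: the colon identity $(y_1^c,\ldots,y_n^c)B:_B(y_1\cdots y_n)^{c-1}=(y_1,\ldots,y_n)B$, and the fact that each monomial $y_1^{a_1}\cdots y_n^{a_n}$ with all $a_i<c$ is a nonzero basis element of $B/JB=B/(y_1^c,\ldots,y_n^c)B$ as a free module over $B/(y_1,\ldots,y_n)B$, and hence lies outside $JB$.

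The natural candidate is $g:=f\cdot y_1^{c-1}\cdots y_n^{c-1}=f\cdot\prod_ix_i^{(c-1)/c}$. A direct valuative bound
\[v(g)=v(f)+\tfrac{c-1}{c}\sum v(x_i)\geq v(I)\cdot\tfrac{c+n(c-1)}{c},\]
coming from $v(f)\geq v(I)$ and $\sum v(x_i)\geq nv(J)\geq nv(I)$, strictly exceeds $nv(I)$ exactly when $c>n$, giving $g\in(IR^+)_{>n}$. The colon identity reduces the assertion $g\notin JB$ to $f\notin(x_1^{1/c},\ldots,x_n^{1/c})B$, which is forced by $v_1$ as soon as $c\,v_1(f)<v_1(J)$.

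The main obstacle is thus arranging both inequalities $c>n$ and $c<v_1(J)/v_1(f)$ for a single integer $c$, i.e.\ finding an integer in the open interval $(n,\,v_1(J)/v_1(f))$. I would optimize $f$ by choosing an element realizing $\inf_{h\in I}v_1(h)=v_1(I)$, enlarging the upper bound as much as possible; if this still fails, I switch to the purely monomial witness $g=y_1^{c-1}\cdots y_n^{c-1}$, which is automatically outside $JB$ by the free-module structure above. The remaining task is to verify $v(g)=\tfrac{c-1}{c}\sum v(x_i)>nv(I)$ at every Rees valuation $v$ of $I$, which reduces to $\sum v(x_i)>nv(I)$ strictly. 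Equality would force the rigid configuration $v(J)=v(I)$ with $v(x_i)=v(I)$ for every $i$—so that $v$ sees no difference between $I$ and $J$—and such a $v$ cannot arise as a Rees valuation of $I$ when $\overline{I}\supsetneq\overline{J}$.
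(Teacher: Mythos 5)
Your ``if'' direction is exactly the paper's: $c^J_B(I)=c^J_B(\overline I)=c^J_B(\overline J)=c^J_B(J)=n$ via \autoref{lem.ParameterIdealBCMComputation}.

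Your ``only if'' direction, however, has a genuine gap. You argue the contrapositive by trying to exhibit an explicit witness $g\in (IR^+)_{>n}\setminus JB$, either $g=f\,\prod x_i^{(c-1)/c}$ with $f\in I\setminus\overline J$, or the pure monomial $g=\prod x_i^{(c-1)/c}$. Neither always works, and your fallback assertion --- that if $\overline I\supsetneq\overline J$ no Rees valuation $v$ of $I$ can satisfy $\sum_i v(x_i)=n\,v(I)$ --- is simply false. Take $R=k[[x,y]]$, $n=1$, $J=(x^3)$ (a partial system of parameters), and $I=(x^3,x^2y)=x^2(x,y)$, so that $\overline I\supsetneq\overline J$. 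The order valuation $v_0$ with $v_0(x)=v_0(y)=1$ is a Rees valuation of $I$ and has $v_0(x_1)=v_0(J)=v_0(I)=3$, so $\sum v_0(x_i)=n\,v_0(I)$. The monomial witness $g=x^{3(c-1)/c}$ then has $v_0(g)=3(c-1)/c<3=n\,v_0(I)$, so $g\notin(IR^+)_{>1}$ for every $c$. Your first candidate $g=f\,x^{3(c-1)/c}$ fails too: the only Rees valuation of $J=(x^3)$ is the $x$-adic $v_x$, and the optimal choice $f=x^2y$ gives $v_x(J)/v_x(f)=3/2$, so no integer $c$ lies in $(n,\,3/2)=(1,3/2)$, and indeed for every $c\geq 2$ one checks directly that $g=x^{2+3(c-1)/c}y=x^3\cdot(x^{2-3/c}y)\in JB$. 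So both of your proposed witnesses fail although the theorem is still true (e.g.\ $x^{5/2}y^{3/4}$ works). This means the ``optimize $f$, else use the monomial'' dichotomy does not exhaust the cases; an actual witness, when it exists, can be a genuine mix of $f$ and fractional monomials in the $x_i$ (and the $y$-direction too), and it is not clear how to produce one systematically.

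The paper avoids the witness problem entirely by running the implication directly. From $c^J_B(I)=n$ one gets $(IR^+)_{>n}\subseteq JB$, hence $\overline{IR^+}\cdot(JR^+)_{>n-1}\subseteq(IR^+)_1\cdot(IR^+)_{>n-1}\subseteq(IR^+)_{>n}\subseteq JB$ using \autoref{lem.FractionalPowerOfIdealPowersAndProducts}. Colon-ing against the family of monomials $x_1^{a_1/t}\cdots x_n^{a_n/t}\in(JR^+)_{>n-1}$ with $\sum a_i=t(n-1)+1$, and using the parameter colon identity plus the intersection lemma \cite[Lemma 3.5]{RodriguezSchwedeBrianconSkodaViaWeakFunctoriality}, one deduces $\overline{IR^+}\subseteq(x_1^{1/t},\ldots,x_n^{1/t})^{t-n}B$ for all $t>n$. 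Contracting via solidity and letting $t\to\infty$ then gives $\overline{IR^+}\subseteq(JR^+)_1$, whence $\overline I\subseteq\overline J$. Note that this direct route uses the \emph{full} family of colon ideals (all tuples $(a_1,\ldots,a_n)$), which is precisely the structural information your single-witness approach throws away; that appears to be why the witness strategy breaks down.
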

  \begin{proof}
    If $\overline{I} = \overline{J}$, then 
    $c^J_B(I)=c^J_B(\overline{I})=c^J_B(\overline{J})=c^J_B(J)=n$ thanks to \autoref{lem.ParameterIdealBCMComputation}.
  
    Now suppose that $c^J_B(I)=n$. Then, we have from \autoref{eq.BCMThresholdIsAMin} that
    \[
        (I R^+)_{> n}
        \subseteq JB.
    \]
    Thus, using \autoref{lem.FractionalPowerOfIdealPowersAndProducts}, $(\overline{IR^+}) \cdot (I R^+)_{> n-1} = (I R^+)_{1}(I R^+)_{> n-1}\subseteq JB$ 
    and since $J\subseteq I$, we have that 
    $\overline{IR^+}  (J R^+)_{> n-1}\subseteq JB$; that is 
    \[
        \overline{I R^+} \subseteq (JB:_{R^+}(J R^+)_{> n-1}).
    \]
  
    Let $y\in \overline{I R^+}$. 
    Note that $x_1^{a_1\over t}\cdots x_n^{a_n \over t}\in (J R^+)_{> n-1}$ for all $a_1,\ldots, a_n, t\in\mathbb{Z}_{>0}$ such that 
    \[ 
        a_1+\cdots +a_n= t(n-1)+1.
    \] 
    We thus obtain that
    \[
        y\in (JB:_B x_1^{a_1\over t}\cdots x_n^{a_n \over t})=
        (x_1^{t-a_1\over t},\ldots, x_n^{t-a_n \over t})B
    \]
    for all $a_1,\ldots, a_n, t\in\mathbb{Z}_{>0}$ such that 
    $a_1+\cdots +a_n= t(n-1)+1$ and $a_i<t$ for all $i$.
    As a consequence, for all $t\in \mathbb{Z}_{>0}$, 
    \[
        y\in\bigcap_{c_1,\ldots,c_n}(x_1^{c_1\over t},\cdots, x_n^{c_n \over t})B
    \]
    where $(c_1,\ldots,c_n)$ runs through all $n$-tuples of strictly positive integers with $c_1 + \ldots + c_n =t-1$.
    By \cite[Lemma 3.5]{RodriguezSchwedeBrianconSkodaViaWeakFunctoriality} (which is really an argument of Hochster, \cf \cite[Section 3]{LipmanTeissierPseudoRational}), for $t > n$, the right side is equal to $(x_1^{1/t},\ldots, x_n^{1/t})^{t-n}B$ and so 
    $y\in (x_1^{1/t},\ldots, x_n^{1/t})^{t-n}B$.
    Additionally, since $(x_1^{1/t},\ldots, x_n^{1/t})^{t-n}\subseteq (JR^+)_{ 1 - {n \over t}}$, it follows that $y\in(JR^+)_{ 1 - {n \over t}}B\cap R^+$ for all $t\in \mathbb{Z}_{>n}$.
    
    We can now take a finite extension $R\subseteq S$, such that
    $y\in(JS)_{ 1 - {n \over t}}B\cap S.$
    By 2.3 in \cite{HochsterSolidClosure},
    $B$ is solid over $S$ and so
    $y\in\overline{(JS)_{ 1 - {n \over t}}}=  (JS)_{ 1 - {n \over t}}\subseteq (JR^+)_{ 1 - {n \over t}}.$
    Hence,
    \[y\in \bigcap_{t\in\mathbb{Z}_{>n}} (JR^+)_{ 1 - {n \over t}}\subseteq (JR^+)_{ 1}.\]
    Therefore,
    \[(I R^+)_{ 1}\subseteq (J R^+)_{ 1}\]
    After intersecting with $R$ and applying \autoref{lem.IntegralClosureExtensionFractionalPowersExtensionContraction}, we obtain $\overline{I}\subseteq \overline{J}$, which is what we wanted to show.
  \end{proof}

\section{Relation with \BCM-test ideals}

Our goal in this section is to relate our \BCM-thresholds to \BCM test ideals (as discussed in \autoref{subsec.BCMTestIdeals}).  
We first provide a variant of \BCM-test ideals which we will see essentially agrees with the previously developed test ideal theory from positive or mixed characteristic.

\begin{definition}[{\cf \cite{SeungsuLeeThesis}}]
    \label{def.tauBForValuative}
    Suppose $(R, \fram)$ is a complete local $\bQ$-Gorenstein normal domain.  Suppose $\frb \subseteq R^+$ is an ideal and $t \in \bQ_{\geq 0}$.  Suppose $B$ is a \BCM $R^+$-algebra.  We define 
    \[
        \tau_{B,\elt}(R, \frb) := \sum_{\phi} \sum_g \phi(gB)
    \]
    where the sum runs over $\phi \in \Hom_R(B, R)$ and $g \in \frb$.  

    In mixed characteristic, for $1 \gg \epsilon > 0$, we will also consider 
    \[ 
        \tau_{B,\elt}(R, p^{\epsilon} \frb) := \sum_{\phi} \sum_g \phi({p^{\epsilon}}gB)
    \]
    where $g$ and $\phi$ are as above.
\end{definition}

We will choose $\frb = (\fra R^+)_{>t}$ for some ideal $\fra \subseteq R$ and $t > 0$, in which case we will write $\tau_{B,\elt}(R, (\fra R^+)_{>t})$.  The subscript $\elt$ reminds us that we are doing our test ideal computation element-wise (compare with the definition found in \cite[Section 6.1]{HaconLamarcheSchwede}).

We show that these test ideals agree with common existing notions of test ideals in positive and mixed characteristic.

\begin{theorem}
    \label{thm.BCMTestIdealViaValuative}
    Suppose $(R, \fram)$ is a complete regular local ring, $\fra \subseteq R$ is an ideal, $t \geq 0$, and that $B$ is a \BCM $R^+$-algebra.  Then we always have the following:
    \begin{equation}
        \label{thm.BCMTestIdealViaValuative.eq.EasyContainment}
        \tau_B(R, \fra^{t+\epsilon}) \subseteq \tau_{B,\elt}(R, (\fra R^+)_{>t})
    \end{equation}
    for all $\epsilon > 0$.
    Additionally:
    \begin{enumerate}
        \item If $R$ is $F$-finite and  has positive characteristic $p$, then we have that 
            \[
                \tau(R, \fra^t) = \tau_B(R, \fra^{t}) = \tau_{B,\elt}(R, (\fra R^+)_{>t}).
            \]        
            \label{thm.BCMTestIdealViaValuative.PosChar}
        \item Suppose $R$ is of mixed characteristic and $B$ is perfectoid.  In this case, we have that
            \[
                \tau_B(R, p^{\epsilon}\fra^{t+\epsilon}) = \tau_{B,\elt}(R, p^\epsilon(\fra R^+)_{>t}).
            \]
            for $1 \gg \epsilon > 0$.  The $p^{\epsilon}$ and $+\epsilon$ on the left, as well as the $p^{\epsilon}$ on the right, can be removed if $B$ is sufficiently large.  That is, we have that:
            \[
                \tau_B(R, \fra^{t}) = \tau_{B,\elt}(R, (\fra R^+)_{>t}) \;\;\;\; \text{ for $B$ sufficiently large.}
            \]
            \label{thm.BCMTestIdealViaValuative.MixedChar}
          %
    \end{enumerate}    
\end{theorem}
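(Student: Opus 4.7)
I would split the theorem into the three displayed claims and handle them in order, leaning on the element-wise reformulation of \autoref{lem.UnDualDefinitionOfTestIdeal}. For the universal containment \eqref{thm.BCMTestIdealViaValuative.eq.EasyContainment}, note that $\tau_B(R,\fra^{t+\epsilon})$ is generated by the images $\phi(f^{1/n}B)$ for $\phi\in\Hom_R(B,R)$, $n\geq 1$, and $f\in\fra^{\lceil (t+\epsilon)n\rceil}$, so the task reduces to checking $f^{1/n}\in(\fra R^+)_{>t}$. This is immediate from the valuative description: for every valuation $v$ of $K(R^+)$ nonnegative on $R^+$, one has $v(f)\geq\lceil(t+\epsilon)n\rceil\,v(\fra)$, so $v(f^{1/n})\geq(t+\epsilon)v(\fra)$, placing $f^{1/n}$ in $(\fra R^+)_{t+\epsilon/2}\subseteq(\fra R^+)_{>t}$.

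\textbf{Positive characteristic.} The identity $\tau(R,\fra^t)=\tau_B(R,\fra^t)$ for regular $F$-finite $R$ is standard BCM test ideal theory, and since $B$ is an $R^+$-algebra it is automatically large enough. It remains to establish $\tau_B(R,\fra^t)=\tau_{B,\elt}(R,(\fra R^+)_{>t})$. The inclusion $\subseteq$ follows from the universal containment combined with the right-continuity of $\tau$ in characteristic $p$: $\tau(R,\fra^t)=\tau(R,\fra^{t+\epsilon})$ for $1\gg\epsilon>0$. For $\supseteq$, given $g\in(\fra R^+)_{>t}$ I would use \autoref{lem.fractionalIntegralClosureInR+ViaNoetherian} to realize $g\in(\fra S)_{t+\delta}$ for some finite extension $R\subseteq S\subseteq R^+$ and some $\delta>0$. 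Taking $p^e$-th powers for $e\gg 0$, the valuative inequality gives $g^{p^e}\in\overline{\fra^N S}$ for some integer $N>tp^e$; in characteristic $p$ integral closure sits inside tight closure, so by \autoref{lem.BCMTightExist} (choosing $B$ to capture tight closure for $S$) this becomes $g^{p^e}\in\fra^N B$. A Frobenius-compatible analysis of $\phi\in\Hom_R(B,R)$ (using that, after replacing $B$ by a perfect enlargement, any such $\phi$ factors through $F^e$) then presents $\phi(gB)$ as a contribution to $\tau_B(R,\fra^{N/p^e})\subseteq\tau_B(R,\fra^t)$, completing the reverse containment.

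\textbf{Mixed characteristic and main obstacle.} For perfectoid $B$, I would combine the universal containment with the cohomological description of \autoref{thm.MixedCharBlowupComparison}, which identifies $\tau_B(R,p^\epsilon\fra^{t+\epsilon})$ with the Matlis dual of the image of $H^d_\fram(R)$ in $H^d_\fram$ of $\myR\Gamma(X^+,\cO_{X^+}((t+\epsilon)G+\epsilon\Div_X p))$. Sections of this twisted sheaf on $X^+$ correspond, by their valuative profile along exceptional divisors and $V(p)$, to elements of $p^\epsilon(\fra R^+)_{>t}$; composing with $\phi\in\Hom_R(B,R)$ then identifies the cohomological image with $\tau_{B,\elt}(R,p^\epsilon(\fra R^+)_{>t})$. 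The ``$B$ sufficiently large'' version, in which the auxiliary $p^\epsilon$ and $+\epsilon$ disappear, follows by invoking the stability of $\tau_B$ under small $\epsilon$-perturbations from \cite{BMPSTWW-RH} and passing to the limit $\epsilon\to 0^+$. The main obstacle throughout is the reverse containment $\tau_{B,\elt}\subseteq\tau_B$: translating the abstract valuative membership $g\in(\fra R^+)_{>t}$ into the concrete finite-index data generating $\tau_B$ requires either the Frobenius-plus-tight-closure bridge in positive characteristic or the perfectoid-plus-$\myR\Gamma$ bridge in mixed characteristic, and in both cases demands careful tracking of $\epsilon$-perturbations and of the passage from finite extensions $S$ up to $R^+$.
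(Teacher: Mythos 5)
The universal containment and the positive-characteristic $\subseteq$ direction are handled essentially as in the paper (your valuative check that $f^{1/n}\in(\fra R^+)_{>t}$ when $f\in\fra^{\lceil(t+\epsilon)n\rceil}$ is just a more spelled-out version of the paper's ``one sum is a subset of the other''), and your mixed-characteristic outline via \autoref{thm.MixedCharBlowupComparison} follows the paper's route, though you elide the divisorial computation $\Div_Y g\geq(t+\epsilon)h^*G$ that makes the comparison of twists on $X^+$ precise.

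The gap is in the reverse containment $\tau_{B,\elt}(R,(\fra R^+)_{>t})\subseteq\tau(R,\fra^t)$ in positive characteristic. You propose to enlarge or replace $B$ --- pass to a perfect enlargement so that $g^{p^e}\in\fra^N B$ upgrades to $g\in(\fra^N)^{1/p^e}B$, and invoke \autoref{lem.BCMTightExist} to arrange that $B$ captures tight closure for $S$. But the theorem asserts the equality for an \emph{arbitrary} BCM $R^+$-algebra $B$, and you cannot choose $B$. Worse, passing to a larger $B'\supseteq B$ moves in the wrong direction: $\Hom_R(B',R)$ is in general a smaller pool of maps than $\Hom_R(B,R)$, so bounding $\tau_{B',\elt}$ by $\tau$ tells you nothing about $\tau_{B,\elt}$. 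Likewise, the step ``after replacing $B$ by a perfect enlargement any $\phi$ factors through $F^e$'' is not meaningful without a mechanism for extending $\phi\in\Hom_R(B,R)$ to $B'$, which is not available in general. The idea you are missing is the paper's factorization of $\phi$ through a \emph{finite} intermediate extension: writing
$\Hom_R(B,R)\cong\Hom_S(B,\omega_S)$ for a finite $R\subseteq S\subseteq B$ containing $g$ lets one split $\phi=T_S\circ\phi_S$, observe that $\phi_S(1)\in\tau_B(\omega_S)=\tau(\omega_S)$ for \emph{every} BCM $B$ in characteristic $p$, and then use $g^m\in\overline{\fra^n S}$ together with the integral-closure invariance of test modules to land $T_S(g\,\phi_S(1))$ inside $\tau(R,\fra^{t+\epsilon})=\tau(R,\fra^t)$. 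This sidesteps any modification of $B$ and avoids $p^e$-th powers altogether.
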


Before beginning the proof, let us note why we need the perfectoid assumption in the mixed characteristic case.  The main reason is that we only know that any two BCM algebras $B_1, B_2$ can map to a larger $B$ in the case that the $B_i$ are perfectoid, and this makes the behavior in the perfectoid case much better.  Additionally, perfectoid algebras also behave in certain convenient ways under blowups.  It remains an open question whether any BCM $R$-algebra has a map to a perfectoid one, see \cite[Remark 4.10]{MaSchwedeSingularitiesMixedCharBCM}.

\begin{proof}
    Note we may take $\omega_R = R$ as $R$ is regular.
    The initial containment \autoref{thm.BCMTestIdealViaValuative.eq.EasyContainment} follows from the definitions as the sum defining the left side (\autoref{def.TestIdealsViaMatlisDuality} and \autoref{lem.UnDualDefinitionOfTestIdeal}) is a subset of the sum defining the right (\autoref{def.tauBForValuative}).  

    We now move to the positive characteristic case \autoref{thm.BCMTestIdealViaValuative.PosChar}.  We observe that $\tau(R, \fra^s) = \tau_{B}(R, \fra^{s})$ for any $s> 0$ by summing up $\tau(R, f^{1/n})$ over $f \in \fra^{\lceil sn\rceil}$ and utilizing \cite[Definition-Proposition 2.7 (Test ideals)]{MaSchwedeSingularitiesMixedCharBCM} and either \cite{TakagiFormulasForMultiplierIdeals} or \cite[Section 8.1, Remark 8.16]{BMPSTWW-RH}.  This proves the first equality $\tau(R, \fra^t) = \tau_B(R, \fra^{t})$.  
    The ideal $\tau_{B,\elt}(R, (\fra R^+)_{>t})$ is generated by $x = \phi(g)$ for $\phi \in \Hom_R(B, R)$ and $g \in (\fra R^+)_{>t} \subseteq R^+ \subseteq B$ and so fix such a $x = \phi(g)$.   Now, as we can write 
    \[
        \Hom_R(B, R) \cong \Hom_R(B \otimes_S S, R) \cong \Hom_S(B, \Hom_R(S, R)) = \Hom_S(B, \omega_S),
    \]
     we see that we can factor $\phi$ as $\phi : B \xrightarrow{\phi_S} \omega_S \xrightarrow{T_S} R$ for any finite extension $R \subseteq S \subseteq B$ where $T_S : \omega_S = \Hom_R(S, R) \to R$ is the evaluation-at-1 map and $\phi_S$ is $S$-linear.

    We know that $g$ is contained in $(\fra S)_{t + \epsilon}$ for some sufficiently large $S \supseteq R$ and any $1 \gg \epsilon > 0$.  
    It follows that we can write 
    \[
        x = \phi(g) = T_S(\phi_S(g)) = T_S(g \phi_S(1)).
    \]
    Furthermore, after assuming $t + \epsilon = n/m$ is rational if necessary, 
    we see that $g^m \in \overline{\fra^n S}$.
    This, plus the fact that $\phi_S(1) \in \tau_B(\omega_S) = \tau(\omega_S)$, implies the following, with justifications on right 
    \[ 
        \begin{array}{rl|l}
            x \in & T_S(g \tau(\omega_S)) \\
            = & T_S( \tau(\omega_S, g^1)) & (\text{\autoref{prop.BasicPropertiesOfTestIdeals} \autoref{prop.BasicPropertiesOfTestIdeals.EasySkoda}})\\
            = & T_S( \tau(\omega_S, (g^m)^{1/m})) & \text{($m/m = 1$, \autoref{prop.BasicPropertiesOfTestIdeals} \autoref{prop.BasicPropertiesOfTestIdeals.Unambiguity})}\\
            \subseteq & T_S(\tau(\omega_S, (\overline{\fra^n S})^{1/m})) & \text{($ (g^m) \subseteq \overline{\fra^n S}$, \autoref{prop.BasicPropertiesOfTestIdeals} \autoref{prop.BasicPropertiesOfTestIdeals.EasyContainments})} \\
            = & T_S(\tau(\omega_S, ({\fra^n S})^{1/m})) & (\text{\autoref{prop.BasicPropertiesOfTestIdeals} \autoref{prop.BasicPropertiesOfTestIdeals.IntegralClosureAgnostic}})\\   
            = & \tau(R, (\fra^n)^{1/m}) & (\text{\cite[Lemma 4.4]{SchwedeTuckerNonPrincipal}, \cf \cite{SchwedeTuckerTestIdealFiniteMaps}})\\
            \subseteq & \tau(R, \fra^{t+\epsilon}) & (\text{$t + \epsilon = n/m$, \autoref{prop.BasicPropertiesOfTestIdeals} \autoref{prop.BasicPropertiesOfTestIdeals.Unambiguity}})\\
            = & \tau(R, \fra^t) &  (\text{\autoref{prop.BasicPropertiesOfTestIdeals} \autoref{prop.BasicPropertiesOfTestIdeals.PlusEpsilonSmall}}).
        \end{array}
    \]
    This completes the proof in the characteristic $p > 0$ case.

    Finally, we handle \autoref{thm.BCMTestIdealViaValuative.MixedChar}, the mixed characteristic case.
    By and using the notation of \autoref{thm.MixedCharBlowupComparison}, for the first statement, it suffices to show that 
    \[
        \tau_{B,\elt}(R, p^{\epsilon} (\fra R^+)_{>t}) \subseteq \Image\Big( H^d_{\fram}(R) \to H^d_{\fram}(\myR \Gamma(X^+, \cO_{X^+}((t+\epsilon)G + \epsilon \Div_X p)))\Big)^{\vee}
    \]
    where again we interpret the dual as a submodule of $\omega_R = R$, and where
     $1 \gg \epsilon > 0$.  As $R$ is Noetherian and the left side  gets bigger as $\epsilon > 0$ gets smaller, we see that the left side is constant for sufficiently small $\epsilon > 0$.  Likewise, the right side is constant for sufficiently small $\epsilon > 0$.
        {The left side is the sum of the Matlis duals of images 
    \begin{equation}
        \label{eq.DifferentImages}
        \begin{array}{rcl}
            \Image\Big(H^d_{\fram}(R) \xrightarrow{\cdot gp^{\epsilon}} H^d_{\fram}(R^+) \Big) & \cong & \Image\Big(H^d_{\fram}(R) \xrightarrow{\cdot gp^{\epsilon}} H^d_{\fram}(\myR \Gamma(X^+, \cO_{X^+}))\Big)\\
             & \cong & \Image\Big(H^d_{\fram}(R) \to H^d_{\fram}(\myR \Gamma(X^+, \cO_{X^+}(\Div_{X^+} (g p^{\epsilon }))))\Big)
        \end{array}
    \end{equation}
    for $g \in  (\fra R^+)_{>t}$ where the first isomorphism is thanks to \cite[Lemma 4.8]{BMPSTWW-MMP}, see also  \cite{BhattAbsoluteIntegralClosure}.  Pick a finite $R \subseteq S \subseteq R^+$ with $S$ containing $g$.  Set $Y$ to be the normalization of $X$ in $K(S)$ and consider the natural map $h : Y \to X$.  As $g \in (\fra S)_{>t}$, for some $1 \gg \epsilon > 0$ we have that $\Div_Y g \geq (t + \epsilon) h^* G$ and so 
    \[
        \cO_{X^+}\big((t+\epsilon)\mu^* G + \epsilon \Div_{X^+}(p)\big) \subseteq \cO_{X^+}\big(\Div_{X^+}(gp^\epsilon)\big)
    \] 
    where $\mu : X^+ \to X$ is the canonical map (no rounding is needed as on $X^+$, arbitrary roots exist).
    Therefore each element from \autoref{eq.DifferentImages} is also in the image of the map from $H^d_{\fram}(\myR \Gamma(X^+, \cO_{X^+}((t+\epsilon)G + \epsilon \Div_X p)))$.  The result then follows by duality.  }

    For the second part of \autoref{thm.BCMTestIdealViaValuative.MixedChar}, by \cite[Theorem 8.11]{BMPSTWW-RH} if $B$ is sufficiently large, then the left side is unchanged if the $p^{\epsilon}$ and $+\epsilon$ are removed.  
    For the right side, notice that the image of $H^d_{\fram}(R) \xrightarrow{\cdot g} H^d_{\fram}(B)$ is Matlis dual to the image of $\Hom_R(B, R) \xrightarrow{\text{eval@}g} R$.  
    
    For each $g$, we claim we can choose $B_g$ such that the image of $\Hom_R(B_g, R) \xrightarrow{\text{eval@}g} R$ is minimal as $B_g$ ranges over sufficiently large perfectoid BCM $B$.  We explain this now.  Set $S_g := R[g] \subseteq R^+$.  We can apply \cite[Proposition 5.7]{MaSchwedeSingularitiesMixedCharBCM} so that $\tau_{B_g}(\omega_{S_g}) = \Image\big( \Hom_S(B_g, \omega_{S_g}) \xrightarrow{\text{eval@1}} \omega_{S_g}\big)$ is minimal, running over all perfectoid BCM $B$.  Note that $g\tau_{B_g}(\omega_{S_g}) = \tau_{B_g}(\omega_{S_g}, \Div g)$ from \cite[Lemma 6.6]{MaSchwedeSingularitiesMixedCharBCM}, and which agrees with the image $\Image\big( \Hom_S(B_g, \omega_{S_g}) \xrightarrow{\text{eval@g}} \omega_{S_g}\big)$ by unraveling the definition (also see for instance \cite[Definition 8.13]{BMPSTWW-RH}).  As above, as any map $B_g \to R$ factors through $\omega_{S_g}$, we see that the image of $\Hom_R(B_g, R) \xrightarrow{\text{eval@}g} R$ is minimal as claimed.

    Now, thanks to \cite[Theorem 4.9]{MaSchwedeSingularitiesMixedCharBCM} we can find a $B$ to which all $B_g$ map to over $R^+$.  
    By \cite[Lemma 5.6]{MaSchwedeSingularitiesMixedCharBCM} and Matlis duality, we can also pick $B$ so that in fact the image of 
    \[
        \Image\Big( p^{\epsilon} g \Hom_R(B, R) \xrightarrow{\text{eval@1}} R\Big) = \Image\Big( g \Hom_R(B, R) \xrightarrow{\text{eval@1}} R\Big).
    \]
    for $\epsilon > 0$ sufficiently small, potentially depending on $g$.  
    As we sum over $g \in (\fra R^+)_{>t}$, we may need to pick distinct $\epsilon > 0$.  However, as the target is Noetherian we only need finitely many $\epsilon > 0$ to stabilize the sum.  This completes the proof.
\end{proof}

We next recall a lemma, essentially due to Datta-Epstein-Tucker.  First though, for $B$ any $R$-module, we fix the notation 
\[ 
    I_B(J) := \{x \in B\;|\; \phi(xB) \subseteq J \text{ for all $\phi \in \Hom_R(B, R)$}\}.
\]
We observe that we always have $JB \subseteq I_B(J)$.  

\begin{lemma}[{\cite[Lemma 3.0.5]{RodriguezBCMThresholdsHypersurfaces}, \cf \cite[Corollary 4.3.15]{DattaEpsteinTuckerMittagLefflerAndFrobenius}}] \label{lemmaEqual}
	Let $J$ be an ideal of a complete regular local ring $(R, \fram)$, and $B$ a $\fram$-adically complete \BCM $R$-algebra. 
	We have that $I_{B}(J)=JB$.  {  Furthermore, even if $B$ is not necessarily $\fram$-adically complete, the same result holds if $J$ is $\fram$-primary.}
\end{lemma}
\begin{proof}
    First assume $B$ is $\fram$-adically complete.  Pick $x \in I_B(J)$.   As $R \to B$ is Ohm-Rush-Trace, we know that $x \in \big( \phi(x) \;|\; \phi\in\Hom_R(B,R)\big)B \subseteq JB$.  Thus $I_B(J) \subseteq JB$ and since the other containment is clear, we are done.

    {
        Now suppose instead that $J$ is $\fram$-primary but $B$ is not necessarily $\fram$-adically complete.  Take $x \in I_B(J)$.  Since we have already handled the case that $B$ is complete, we see that $x \in J\widehat{B}$, or in other words that $\overline{x} = 0 \in \widehat{B}/J\widehat{B} \cong B/JB$ and hence that $x \in JB$ as well.        
    }
\end{proof}

In an earlier version of this paper, we omitted the hypothesis that $B$ was $\fram$-adically complete.  We thank Neil Epstein for pointing out this gap.  

\begin{proposition}[\textit{c.f.} \cite{MustataTakagiWatanabeFThresholdsAndBernsteinSato, BlickleMustataSmithDiscretenessAndRationalityOfFThresholds}]\label{ThresholdsAndTest}
	Let $\fra, J$ be proper nonzero ideals of a complete regular local ring $R$ and $B$ a \BCM $R^+$-algebra. 
	\begin{enumerate}
		\item We have that 
        \[ 
            \tau_{B, \elt}(R,(\fra R^+)_{> c^J_B(\fra)})\subseteq J.
        \] 
        Hence, $\tau(R, \fra^{c^J_B(\fra)}) \subseteq J$ in positive characteristic and $\tau_{B}(R,\fra^{c^J_B(\fra)})\subseteq J$ for $B$ perfectoid and sufficiently large if $R$ is of mixed characteristic.
        \label{ThresholdsAndTesta}
		\item Additionally if $B$ is $\fram$-adically complete, then for {any real $\alpha > 0$}, $$c_B^{\tau_{B,\elt}(R, (\fra R^+)_{>\alpha})}\left(\fra\right)\leq \alpha.$$\label{ThresholdsAndTestb}
            Hence $c_B^{\tau(R, \fra^{\alpha})}\left(\fra\right)\leq \alpha$ in positive characteristic and $c_B^{\tau_{B}(R, \fra^{\alpha})}\left(\fra\right)\leq \alpha$ for $B$ perfectoid and sufficiently large if $R$ is of mixed characteristic.
	\end{enumerate}
\end{proposition}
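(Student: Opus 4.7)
The plan is to derive both parts directly from the definitions, using \autoref{eq.BCMThresholdIsAMin} and \autoref{lemmaEqual} as the two key inputs, and then to translate into the usual test ideal language via \autoref{thm.BCMTestIdealViaValuative}.

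For part \autoref{ThresholdsAndTesta}, I would first invoke \autoref{eq.BCMThresholdIsAMin} so that the supremum in the definition of the BCM-threshold is actually attained; this gives the key containment $(\fra R^+)_{> c^J_B(\fra)} \subseteq JB$. The rest is purely formal: by \autoref{def.tauBForValuative}, the ideal $\tau_{B,\elt}(R, (\fra R^+)_{> c^J_B(\fra)})$ is generated by elements of the form $\phi(gb)$ with $\phi \in \Hom_R(B,R)$, $g \in (\fra R^+)_{> c^J_B(\fra)}$, and $b \in B$; since each such $g$ already lies in $JB$, the $R$-linearity of $\phi$ gives $\phi(gb) \in \phi(JB) \subseteq J$. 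Summing over the generators yields $\tau_{B,\elt}(R, (\fra R^+)_{> c^J_B(\fra)}) \subseteq J$. The ``hence'' statements then follow by substituting the identifications of \autoref{thm.BCMTestIdealViaValuative}: in positive characteristic one has $\tau(R, \fra^t) = \tau_{B,\elt}(R, (\fra R^+)_{>t})$, and in mixed characteristic, for $B$ perfectoid and sufficiently large, $\tau_B(R, \fra^t) = \tau_{B,\elt}(R, (\fra R^+)_{>t})$ (with $\tau_B(R, \fra^{>c}) = \tau_B(R, \fra^{c+\epsilon})$ for $1 \gg \epsilon > 0$ by Noetherianity).

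For part \autoref{ThresholdsAndTestb}, I would set $J := \tau_{B,\elt}(R,(\fra R^+)_{>\alpha})$ and, via \autoref{eq.BCMThresholdIsAMin}, reduce the desired inequality $c^J_B(\fra) \leq \alpha$ to the single containment $(\fra R^+)_{>\alpha} \subseteq JB$. The crucial tool here is \autoref{lemmaEqual}, which identifies $JB$ with $I_B(J) = \{x \in B \mid \phi(xB) \subseteq J \text{ for all } \phi \in \Hom_R(B,R)\}$. But every $g \in (\fra R^+)_{>\alpha}$ satisfies $\phi(gB) \subseteq J$ for all $\phi$ tautologically, since by construction $J$ is the sum of precisely these ideals. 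Thus $(\fra R^+)_{>\alpha} \subseteq I_B(J) = JB$, as required. The corresponding positive- and mixed-characteristic statements again follow by substitution from \autoref{thm.BCMTestIdealViaValuative}.

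There is no real obstacle: both halves are essentially one-line consequences of the definitions once the correct machinery is in hand. The only subtle points are that in part \autoref{ThresholdsAndTesta} one must use \autoref{eq.BCMThresholdIsAMin} to plug in the exact value $c^J_B(\fra)$ (rather than $c^J_B(\fra) + \epsilon$), and that in part \autoref{ThresholdsAndTestb} the regularity hypothesis on $R$ enters through \autoref{lemmaEqual}.
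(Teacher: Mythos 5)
Your proposal is correct and matches the paper's argument step for step: part (1) uses \autoref{eq.BCMThresholdIsAMin} to get $(\fra R^+)_{>c^J_B(\fra)}\subseteq JB$ and then applies $\phi\in\Hom_R(B,R)$ to land inside $J$, while part (2) sets $J=\tau_{B,\elt}(R,(\fra R^+)_{>\alpha})$, observes $(\fra R^+)_{>\alpha}\subseteq I_B(J)$ tautologically from the definition, and invokes \autoref{lemmaEqual} to conclude $(\fra R^+)_{>\alpha}\subseteq JB$, hence $c^J_B(\fra)\leq\alpha$. Your identification of the two subtle points (attaining the minimum in part (1), and the role of regularity via \autoref{lemmaEqual} in part (2)) is also accurate.
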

\begin{proof}\,
	\begin{enumerate}
        \item We have that $(\fra R^+)_{>c_B^J(\fra)} \subseteq JB$ by \autoref{eq.BCMThresholdIsAMin}.  Hence we immediately see from the definition that $\tau_{B,\elt}(R, (\fra R^+)_{>c_B^J(\fra)}) \subseteq J$ as desired.  The further statements then follow from \autoref{thm.BCMTestIdealViaValuative}.
		
		\item 
        By our definition of $\tau_{B, \elt}$, we have that $(\fra R^+)_{>\alpha} B \subseteq I_{B}\big(\tau_{B,\elt}((\fra R^+)_{>\alpha})\big)$.
		As $B$ is $\fram$-adically complete, by \autoref{lemmaEqual},  $(\fra R^+)_{>\alpha}\subseteq \tau_{B,\elt}(R, (\fra R^+)_{>\alpha})B$.
		It follows that $c^{\tau_{B,\elt}((\fra R^+)_{>\alpha})}\left(\fra\right)\leq \alpha.$  The further statements again follow from \autoref{thm.BCMTestIdealViaValuative}.
	\end{enumerate}
\end{proof}

It follows from \autoref{def.FractionalIntegralClosureForRealInR+} that $\tau_{B,\elt}(R, (\fra R^+)_{>t}) = \bigcup_{\epsilon > 0} \tau_{B,\elt}(R, (\fra R^+)_{>t+\epsilon})$ where the union on the right, which is ascending as $\epsilon$ goes to zero, is over $\epsilon > 0$ such that $t +\epsilon$ is rational.  Hence by Noetherianity we have that $\tau_{B,\elt}(R, (\fra R^+)_{>t}) = \tau_{B, \elt}(R, (\fra R^+)_{>t+\epsilon})$ for $1 \gg \epsilon > 0$.  Thus we could ask about the behavior for $t - \epsilon$.  This leads us to the following definition inspired originally by \cite{EinLazSmithVarJumpingCoeffs}.

\begin{definition}
    Suppose $(R, \fram)$ is a complete Noetherian local ring.  For $B$ any $\fram$-adically complete \BCM $R^+$-algebra, we say that the \emph{$B_{\elt}$-jumping numbers of an ideal $\fra \subseteq R$} are the numbers $t > 0$ such that $\tau_{B, \elt}(R, (\fra R^+)_{>t}) \neq \tau_{B, \elt}(R, (\fra R^+)_{>t-\epsilon})$ for all $1 \gg \epsilon > 0$.
\end{definition}

Under moderate hypotheses, in characteristic $p > 0$ or mixed characteristic, these agree with other well known notions of jumping numbers in view of \autoref{thm.BCMTestIdealViaValuative}.

\begin{corollary}[\textit{c.f.} \cite{MustataTakagiWatanabeFThresholdsAndBernsteinSato, BlickleMustataSmithDiscretenessAndRationalityOfFThresholds}]\label{Main}
	Let $\fra$ be a proper nonzero ideal of a complete regular local ring $R$ and $B$ a $\fram$-adically complete \BCM $R^+$-algebra. The set of $B_{\elt}$-jumping numbers for $\fra$ is the same as the set of $B$-thresholds of $\fra$.  Hence, in mixed characteristic, for $B$ perfectoid and sufficiently large, the set of $B$-thresholds of $\fra$ is the same as the set of jumping numbers of $\tau_B(R, \fra^t)$.
\end{corollary}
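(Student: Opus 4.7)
My plan is to prove the set equality by two containments, with the entire content being a packaging of \autoref{ThresholdsAndTest} together with two elementary monotonicity facts: first, that $I \subseteq J$ implies $c^J_B(\fra) \leq c^I_B(\fra)$ (because $IB \subseteq JB$); and second, that $t \mapsto \tau_{B, \elt}(R, (\fra R^+)_{>t})$ is non-increasing, since raising $t$ shrinks $(\fra R^+)_{>t}$. Also useful is the observation that if $(\fra R^+)_{>s} \subseteq JB$ then $\tau_{B, \elt}(R, (\fra R^+)_{>s}) \subseteq J$, which is immediate from the definition of $\tau_{B,\elt}$ and the $R$-linearity of $\phi \in \Hom_R(B,R)$ (so $\phi(JB) = J\phi(B) \subseteq J$).

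For the containment that every $B_\elt$-jumping number $t$ is a $B$-threshold, I would set $J := \tau_{B, \elt}(R, (\fra R^+)_{>t})$ and show $c^J_B(\fra) = t$. The inequality $c^J_B(\fra) \leq t$ is \autoref{ThresholdsAndTestb} directly. For the reverse, I would argue by contradiction: if $c^J_B(\fra) < t$, choose $s$ with $c^J_B(\fra) < s < t$, so that $(\fra R^+)_{>s} \subseteq JB$ by \autoref{eq.BCMThresholdIsAMin}; applying the observation above gives $\tau_{B, \elt}(R, (\fra R^+)_{>s}) \subseteq J$, and monotonicity of $\tau_{B, \elt}$ yields the reverse inclusion, forcing equality. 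Setting $\epsilon = t - s > 0$ contradicts that $t$ is a jumping number. For the converse, starting with $t = c^J_B(\fra)$ and assuming towards contradiction that $t$ is not a jumping number, there is some $\epsilon_0 > 0$ with $\tau_{B, \elt}(R, (\fra R^+)_{>t-\epsilon_0}) = \tau_{B, \elt}(R, (\fra R^+)_{>t}) =: I$. By \autoref{ThresholdsAndTesta}, $I \subseteq J$, and by \autoref{ThresholdsAndTestb} applied to $\alpha = t - \epsilon_0$, $c^I_B(\fra) \leq t - \epsilon_0$. Ideal monotonicity of the threshold then gives $c^J_B(\fra) \leq c^I_B(\fra) \leq t - \epsilon_0 < t$, contradicting $c^J_B(\fra) = t$.

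Finally, the mixed characteristic consequence is immediate from \autoref{thm.BCMTestIdealViaValuative}\autoref{thm.BCMTestIdealViaValuative.MixedChar}, which identifies $\tau_{B, \elt}(R, (\fra R^+)_{>t})$ with $\tau_B(R, \fra^t)$ whenever $B$ is perfectoid and sufficiently large, so $B_\elt$-jumping numbers coincide with the classical jumping numbers of $\tau_B(R, \fra^t)$. I do not anticipate any hard step; the whole argument is the standard test-ideal/jumping-number dance as in \cite{EinLazSmithVarJumpingCoeffs,BlickleMustataSmithDiscretenessAndRationalityOfFThresholds,MustataTakagiWatanabeFThresholdsAndBernsteinSato}, and the main care is simply in tracking which direction monotonicity runs for $\tau_{B, \elt}$ versus $c^{(-)}_B(\fra)$ when setting up the two contradictions.
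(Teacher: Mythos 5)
Your proposal is correct and takes essentially the same route as the paper: both directions reduce to \autoref{ThresholdsAndTest} together with the monotonicity of $c^{(-)}_B(\fra)$ in the ideal and of $\tau_{B,\elt}(R,(\fra R^+)_{>t})$ in $t$. The only organizational difference is that in the "threshold $\Rightarrow$ jumping number" direction you reuse \autoref{ThresholdsAndTestb} at level $t-\epsilon_0$ to conclude $c^I_B(\fra) \leq t-\epsilon_0$, whereas the paper re-derives that step directly from the definition of $\tau_{B,\elt}$ via \autoref{lemmaEqual}; these are the same argument unpacked to different depths.
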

\begin{proof}
	Suppose that $\alpha$ is a $B_{\elt}$-jumping number for $\fra$.
	By \autoref{ThresholdsAndTest}\autoref{ThresholdsAndTestb}, we know that $c_B^{\tau_{B, \elt}(R,(\fra R^+)_{>\alpha})}\left(\fra\right)\leq \alpha$ and, as a consequence, setting $c := c_B^{\tau_{B,\elt}(R,(\fra R^+)_{>\alpha})}(\fra)$, that
	$$\tau_{B,\elt}(R,(\fra R^+)_{>\alpha})\subseteq \tau_{B,\elt}\left(R,(\fra R^+)_{>c}\right).$$
	By \autoref{ThresholdsAndTest}\autoref{ThresholdsAndTesta}, 
    we have that 
    $$\tau_{B,\elt}\left(R,(\fra R^+)_{>c}\right)\subseteq \tau_{B,\elt}(R,(\fra R^+)_{>\alpha}).$$
	Hence, 
    \[ 
        \tau_{B,\elt}(R,(\fra R^+)_{>\alpha})= \tau_{B,\elt}\left(R,(\fra R^+)_{>c}\right).
    \]
	Given that $\alpha$ is a $B_{\elt}$-jumping number and $c = c_B^{\tau_{B,\elt}(R,(\fra R^+)_{>\alpha})}\left(\fra\right)\leq \alpha$, we see that then $c_B^{\tau_{B,\elt}(R,(\fra R^+)_{>\alpha})}\left(\fra\right)=\alpha$.
	
	Now redefine $c :=c_B^J(\fra)$ for some $J$ and assume that $\tau_{B,\elt}(R,(\fra R^+)_{>c})= \tau_{B,\elt}(R,(\fra R^+)_{>c'})$  for some $c'<c$.
	By \autoref{ThresholdsAndTest}\autoref{ThresholdsAndTesta}, $\tau_{B,\elt}(R,(\fra R^+)_{>c'})=\tau_{B,\elt}(R,(\fra R^+)_{>c_B^J(\fra)})\subseteq J$. 
	It follows from the definition of $\tau_{B,\elt}(R,(\fra R^+)_{>c'})$ that $(\fra R^+)_{>c'}\subseteq I_{B}(\tau_{B,\elt}(R,(\fra R^+)_{>c'}))$.
	By \autoref{lemmaEqual},  $(\fra R^+)_{>c'}\subseteq \tau_{B,\elt}(R,(\fra R^+)_{>c'})B$.
	Thus, $(\fra R^+)_{>c'}\subseteq\tau_{B,\elt}(R,(\fra R^+)_{>c'})B\subseteq JB$ and $c'\geq c_B^J(\fra)$.
	It follows that $c>c'\geq c_B^J(\fra)$, a contradiction.
	Therefore, $c$ is a $B_{\elt}$-jumping number for $\fra$.
\end{proof}

This also yields the following alternate characterization of $c^J_B(\fra)$ in the regular case.

\begin{corollary}
    \label{cor.CharacterizationOfThresholdViaTestIdeal}
    Assume the notation of \autoref{ThresholdsAndTest}, and assume additionally that $B$ is an $\fram$-adically complete $R^+$-algebra or $J$ is $\fram$-primary.  Finally, assume that $R$ is $F$-finite of characteristic $p > 0$ or of mixed characteristic with $B$ perfectoid and sufficiently large.  Then
    \[
        c^J_B(\fra) = \sup\{t > 0 \;|\; \tau_B(R, \fra^t) \nsubseteq J\} = \inf\{t > 0 \;|\; \tau_B(R, \fra^t) \subseteq J\}.
    \]
\end{corollary}
\begin{proof}
    First if $t > c^J_B(\fra)$ then $\tau_B(R, \fra^t) \subseteq J$ by \autoref{ThresholdsAndTest} \autoref{ThresholdsAndTesta}.  On the other hand if $t < c^J_B(\fra)$, then by definition $(\fra R^+)_{>t} \not\subseteq JB = I_B(J)$, and hence there exists $\phi : B \to R$ and $g \in (\fra R^+)_{>t}$ such that $\phi(gB) \not\subseteq J$.  But $\phi(gB) \subseteq \tau_B(R, \fra^t)$ which completes the proof.
\end{proof}

\section{Bounds on multiplicity}

We conclude the paper by proving an analog of \cite[Theorem 5.6]{HunekeMustataTakagiWatanabeFThresholdsTightClosureIntClosureMultBounds}.  We follow a detailed suggestion of Linquan Ma, who provided the key \autoref{prop.Ma}, to let us deform to the monomial case in characteristic $p > 0$.

We adopt the following notation.  
Suppose $(R, \fram)$ is a regular local ring of mixed characteristic $(0, p > 0)$ and $J = (x_1^{a_1}, \dots, x_d^{a_d})$.  Set $T =R[\fram t, t^{-1}]$ as the extended Rees algebra and $S = T/(t^{-1}) = \gr_{\fram}(R)$ the associated graded ring.  For any ideal $I \subseteq R$, we define  
    \[
        \initial(I) := \ker\big(S \to \gr_{\fram}(R/I)\big) \subseteq S.
    \]
    Note this is called $I^*$ for instance in \cite{ValabregaValla.FormRingsAndRegularSequences}.
    We note that $\initial(I)$ is an ideal of $S$, the associated graded ring of $R$. 
    Additionally, we define 
    \[
        I' = \dots \oplus I t^{-1} \oplus I \oplus (I \cap \fram)t \oplus (I \cap \fram^2)t^2 \oplus \dots \subseteq T.
    \]
    We observe that $I'$ is the largest  homogeneous ideal of $T$ contained in $(IT, t^{-1}-1)$ and that 
    \begin{equation}
        \label{eq.IPrimesViaColon}
        I'=IT:_T(t^{-1})^\infty.
    \end{equation} 
    To see this, let $\frb$ be the largest ideal  homogeneous ideal of $T$ contained in $(IT, t^{-1}-1)$. Note that for any homogeneous element $g\in T$, we have that $g\in (IT, t^{-1}-1)$ if and only if $gt^{-1}\in (IT, t^{-1}-1).$  It follows that $I'\subseteq IT:_T(t^{-1})^\infty \subseteq \frb.$ 
    Now, let $h\in \frb$ be homogeneous of degree $\ell\in\bZ$ and let $g=ht^{-\ell}.$ Since $g\in (IT, t^{-1}-1)$, we can write 
    \[
        g=\sum_{i=-n}^n f_i+ \left(\sum_{i=-n}^n h_i\right)(t^{-1}-1)
    \]
    for some $n\in\bZ_{>0}$, $f_i\in IT$ and $h_i\in T$ homogeneous of degree $i$. Thus,
    \[
        g=f_n+h_n+h_{-n}t^{-1}+\sum_{i=-n}^{n-1} \left(f_i+h_{i+1}t^{-1}-h_i\right)
    \]
    Since $g$ is homogeneous of degree zero, we have that $h_i\in IT$ for $i=n,-n$ and $h_{i+1}t^{-1}-h_i\in IT$ for all nonzero $i$ with $-n\leq i\leq n-1$. As a consequence, $h_i\in IT$ for all $i$, and so $g\in [IT]_0=I$.
    It follows that $h\in (\fram^\ell\cap I)t^\ell$ if $\ell> 0$ and $h\in It^{\ell}$ when $\ell\leq0$. Hence, $\frb\subseteq I'.$

We also observe that
    \begin{enumerate}
        \item  $I' T[t] = I T[t] = I R[t,t^{-1}]$ and that,
        \item  via the map $T \to S = T/(t^{-1})$, $I' S = \initial(I)$.  
    \end{enumerate}
Part (a) is clear.  For part (b), simply observe that $\initial(I)$ is kernel of 
\[  
\begin{array}{rl}
   & R/\fram \oplus \fram/\fram^2 \oplus \fram^2/\fram^3 \oplus \dots  \\
   \to &  R/I \oplus \fram/(I \cap \fram + \fram^2) \oplus \fram^2/(I \cap \fram^2 + \fram^3) \oplus \dots
\end{array}
\] 
which agrees with $I'S$.

    We make the following observation.  Since $R$ is regular, $S = k[X_1, \dots, X_d]$ where $k = R/\fram$ and $X_i$ corresponds to $x_i$.  Furthermore, we claim that 
    \begin{equation}
        \label{eq.DescriptionOfInitialJ}
        \initial(J) = (X_1^{a_1}, \dots, X_d^{a_d}).
    \end{equation}
    But since the initial forms\footnote{the initial form of $f \in \fram^n \setminus \fram^{n+1}$ is the residue of $f$ in $\fram^n/\fram^{n+1} = S_n$} of the $x_i^{a_i} \in S$ are clearly equal to $X_i^{a_i}$, and since $X_1^{a_1}, \dots, X_d^{a_d}$ form a regular sequence in $S$, we see that the claim holds by \cite[Proposition 2.1]{ValabregaValla.FormRingsAndRegularSequences}.

\begin{proposition}[Ma]
    \label{prop.Ma}
    Suppose now that $(R, \fram)$ is a regular local ring of mixed characteristic $(0, p> 0)$ which is essentially of finite type over a DVR and that $I \subseteq R$ is an ideal.    Set $T = R[\fram t, t^{-1}]$ the extended Rees algebra and $S = T/(t^{-1}) = \gr_{\fram}(R)$ the associated graded ring.  
    Then
    \[
        \tau(R, I^c)' \supseteq \tau(T, I'^c)
    \]
    where here $\tau$ denotes the test ideal of \cite{BMPSTWW-RH}.
\end{proposition}
\begin{proof}
    Note that $T[t] = R[t, t^{-1}]$ is a smooth extension of $R$.  Hence it follows as the test ideal of \cite{BMPSTWW-RH} commutes with localization and smooth maps (\cite[Theorem 8.41(k)]{BMPSTWW-RH}), that 
    \[
        \tau(R, I^c) T[t] = \tau(T[t], (IT[t])^c) = \tau(T, I'^c)T[t].
    \]
    As a consequence, we see that 
    \[
        \tau(T, I'^c) \subseteq \bigcup_n \tau(R, I^c) T : (t^{-1})^n
    \]
    But we saw that the right side is $\tau(R, I^c)'$ above in \autoref{eq.IPrimesViaColon}.
    %
\end{proof}

\begin{lemma}
    \label{lem.EasyLemOnGraded}
    With notation above, $\ell_R(R/\fra) = \ell_S(S/\initial(\fra))$.  
\end{lemma}
\begin{proof}
    We have that $S/\initial(\fra)$ is isomorphic to $\gr_{\fram}(R/\fra)$, and so $\ell_S(S/\initial(\fra))=\ell_S(\gr_{\fram}(R/\fra))=\ell_R(R/\fra)$.    
\end{proof}


We now come to our final result.  We follow the strategy laid out in \cite[Theorem 5.6]{HunekeMustataTakagiWatanabeFThresholdsTightClosureIntClosureMultBounds} again following the suggestion of Ma.

\begin{theorem}
    \label{thm.MultiplicityStatement}
Suppose $(R, \fram = (x_1, \dots, x_d))$ is a  $d$-dimensional regular local ring  which is an essentially finite type algebra over a DVR of mixed characteristic $(0, p> 0)$  and $J = (x_1^{a_1}, \dots, x_d^{a_d}) \subseteq R$ for some $a_i > 0$.  If $\fra$ is $\fram$-primary, then 
    \[
        e(\fra) \geq \Big( {d \over c^{J\widehat{R}}_B(\fra \widehat{R})} \Big)^d e(J)
    \]
    for $B$ a sufficiently large perfectoid \BCM $\widehat{R}^+$-algebra.  
\end{theorem}

\begin{proof}[Proof of \autoref{thm.MultiplicityStatement}]
    As $e(\fra) = \displaystyle\lim_{n \to \infty} {d! \over n^d }\ell_R(R/\fra^n)
    $ it suffices to show 
    \[
        \ell_R(R/\fra^n) \geq {n^d \over d!}\Big( {d \over c^J_B(\fra \widehat{R})} \Big)^d e(J) = {1 \over d!}\Big( {n d \over c^J_B(\fra \widehat{R})} \Big)^d e(J) = {1 \over d!}\Big( {d \over c^J_B(\fra^n \widehat{R})} \Big)^d e(J).
    \]
    Thus replacing $\fra^n$ by $\fra$, we simply must show that
    \[
        \ell_R(R/\fra) \geq {1 \over d!}\Big( {d \over c^J_B(\fra \widehat{R})} \Big)^d e(J).
    \]
    Indeed, in the proof of \cite[Theorem 5.6]{HunekeMustataTakagiWatanabeFThresholdsTightClosureIntClosureMultBounds} the authors made essentially the same reductions.
    

    Recall from \autoref{cor.CharacterizationOfThresholdViaTestIdeal}, and using that the formation of the test ideal on $R$ commutes with completion (\cite[Corollary 8.40]{BMPSTWW-RH}), we have that $c^J_B(\fra \widehat{R}) = \sup\{\lambda \;|\; \tau(R, \fra^\lambda) \nsubseteq J\} = \inf\{\lambda \;|\; \tau(R, \fra^\lambda) \subseteq J\}$ where $\tau$ is the test ideal of \cite{BMPSTWW-RH} since $B$ is large enough.
    Now, suppose $\lambda > c^J_B(\fra \widehat{R})$, then $\tau_B(R, \fra^\lambda) \subseteq J$.    
    Therefore, $\tau(R, \fra^\lambda)' \subseteq J'$ and hence from \autoref{prop.Ma} that 
    \[
        \initial J = J'S \supseteq \tau(R, \fra^{\lambda})'S \supseteq \tau(T, \fra'^{\lambda})S \supseteq \tau(S, (\fra'S)^{\lambda}) = \tau(S, \initial(\fra)^{\lambda})
    \]
    where the final containment is the restriction theorem for test ideals from mixed characteristic to positive characteristic,  see \cite[Theorem 5.9]{MaSchwedeSingularitiesMixedCharBCM} or \cite[Theorem 8.41(l)]{BMPSTWW-RH}.  Hence $c^{\initial J}(\initial \fra) \leq \lambda$.  
    Now, the proof of \cite[Theorem 5.6]{HunekeMustataTakagiWatanabeFThresholdsTightClosureIntClosureMultBounds} showed in fact the characteristic $p > 0$ statement that  
    \[
        \ell_S(S/\initial\fra) \geq {1 \over d!}\Big( {d \over c^{\initial{J}}(\initial{\fra})} \Big)^d e(\initial{J})
    \]
    by a direct computation of monomials.
    Therefore using this and \autoref{lem.EasyLemOnGraded}, we see that 
    \[
        \ell_R(R/\fra) = \ell_S(S/\initial{\fra}) \geq {1 \over d!}\Big( {d \over c^{\initial{J}}(\initial{\fra})} \Big)^d e(\initial{J}) \geq {1 \over d!}\Big( {d \over \lambda} \Big)^d e(\initial{J}).
    \]    
    Next note that $e(\initial(J)) = e(J)$ by \autoref{eq.DescriptionOfInitialJ} and hence
    $\ell_R(R/\fra) \geq  {1 \over d!}\Big( {d \over \lambda} \Big)^d e(J)$.
    Since this holds for all $\lambda > c^J_B(\fra \widehat{R})$ we are done by taking a limit.
\end{proof}

\bibliographystyle{skalpha}
\bibliography{main}

\end{document}